\documentclass{amsart}
\usepackage[left=3cm,right=3cm]{geometry}
\usepackage[T1]{fontenc}
\usepackage[utf8]{inputenc}

\usepackage{amsmath}	% Mathe: Für align-Umgebung
\numberwithin{equation}{section}

\usepackage{amsfonts}   % Mathe für mathbb
\usepackage[alphabetic]{amsrefs}
\usepackage{tikz-cd}  % Kommutierende Diagramme
\usepackage{amssymb}

\usepackage{amsthm}
\usepackage{mathtools}
\usepackage{enumerate}   %Verschiedene Aufzählungen

\usepackage[colorlinks]{hyperref}

\hypersetup{
	colorlinks,
	citecolor=black,
	filecolor=black,
	linkcolor=black,
	urlcolor=black
}

\newtheorem{lemma}{Lemma}[section]
\newtheorem{cor}{Corollary}
\newtheorem{rem}{Remark}[section]
\newtheorem{thm}{Theorem}

\newtheorem{definition}{Definition}

\newcommand{\IN}{\mathbb{N}}
\newcommand{\IZ}{\mathbb{Z}}
\newcommand{\IQ}{\mathbb{Q}}
\newcommand{\IR}{\mathbb{R}}
\newcommand{\IC}{\mathbb{C}}

\newcommand{\fa}{\mathfrak{a}}

\newcommand{\CB}{\mathcal{B}} 
\newcommand{\CN}{\mathcal{N}}

\newcommand{\CH}{\mathcal{H}}

\newcommand{\spec}{\mathrm{spec}}

\newcommand{\SL}{\mathrm{SL}}             % Fraktur sl
 
\newcommand{\GL}{\mathrm{GL}}
\newcommand{\PGL}{\mathrm{PGL}}
 
\renewcommand{\mod}[1]{\ \textnormal{mod } #1}

\DefineSimpleKey{bib}{primaryclass}{}
\DefineSimpleKey{bib}{archiveprefix}{}

\BibSpec{arXiv}{%
  +{}{\PrintAuthors}{author}
  +{,}{ \textit}{title}
  +{}{ \parenthesize}{date}
  +{,}{ arXiv }{eprint}
  +{,}{ primary class }{primaryclass}
}

\title{Symmetric square $L$-functions on $\mathrm{GL}_3$}
\author{Johannes Linn }
\date{}
\thanks{The author thanks Prof.\ Valentin Blomer for helpful discussions.}
\keywords{}
\subjclass{11F55, 11F72, 11M41}
\address{Max Planck Institute for Mathematics Bonn, Vivatsgasse 7, 53111 Bonn, Germany}
\email{linn@mpim-bonn.mpg.de}

\begin{document}
\begin{abstract}
    We give an asymptotic formula with a power-saving error term for the twisted first moment of symmetric square $L$-functions on $\GL_3$ in the spectral aspect. We apply this to obtain non-vanishing results and lower bounds of the expected order of magnitude for even moments, supporting the random matrix model for a unitary ensemble of $L$-functions. The main ingredients are the $\GL_3$ Kuznetsov formula, an asymmetric approximate functional equation, and strong bounds for the integral transforms appearing in the Kuznetsov formula.
\end{abstract}
\maketitle
\section{Introduction}
\subsection{Twisted moments in the spectral aspect.} 
Moments of central values of automorphic $L$-functions are an important object in analytic number theory. They provide a quantitative way of studying non-vanishing and the distribution of $L$-values, and the size of $L$-functions on the critical line. While the theory is highly developed for families on $\GL_1$ and $\GL_2$, considerably less is known in higher rank. In particular, genuinely varying $\GL_3$-families present substantial new difficulties, both on the arithmetic side, through higher-rank Kloosterman sums, and on the analytic side, through the intricate archimedean integral transforms appearing in the $\GL_3$ Kuznetsov formula. 

In this paper, we study such a family of degree-six $L$-functions in the spectral aspect. More precisely, we consider central values of symmetric square $L$-functions attached to level-one cuspidal automorphic representations of $\PGL_3$. Let $\CB$ denote the set of these representations. 

A preliminary issue is that the symmetric square $L$-function is not known to be automorphic in this generality. Nevertheless, its meromorphic continuation and functional equation follow from its relation to Rankin–Selberg $L$-functions
\[
    \Lambda(s,\pi,\mathrm{sym}^2)\Lambda(s,\widetilde{\pi})
    =
    \Lambda(s,\pi\otimes \pi)\ .
\]
Using recent work of Gan \cite{Ga25} on the adjoint lift from $\GL_3$ to $\GL_8$, we show that the completed symmetric square $L$-function is entire except for the possible poles at $s=0$ and $s=1$ that occur for symmetric square lifts; see Lemma \ref{lemma:lfuncisentire}.

Our main result is an asymptotic formula, with a power-saving error term, for the first moment of the central values of these $L$-functions twisted by Fourier-Whittaker coefficients $A_\pi(m_1,m_2)$ for a spectral family that consists essentially of all forms $\pi\in\CB$ with Langlands parameter $\mu_\pi$ of size $T$ inside a small ball with radius $T^\delta$ for some small positive constant $\delta$. We require the center of the ball $\tilde{\mu}$ to be in generic position, meaning it is away from the walls of the Weyl chamber and away from self-dual forms; see (\ref{eq:genericposition}).
We require $\tilde{\mu}$ to be away from the walls, as the spectral measure drops near the walls and spectral averages become less powerful. We require $\tilde{\mu}$ to be away from self-dual forms because they have lower analytic conductor, which leads to a modified approximate functional equation that would need different treatment; see Remark \ref{rem:selfdualforms}.
\begin{thm}
\label{thm:MainTheorem}
There exists $\delta_0>0$ such that for every $0<\delta<\delta_0$ there exist constants $A_0,\eta,\kappa>0$ with the following property. Let $\tilde{\mu}$ be in generic position, as in (\ref{eq:genericposition})
below, and let $h\in \tilde{\CH}(A_0,\tilde{\mu},\delta)$ be a test function, as defined in section \ref{subsec:kuznetsov_testfunction}, and let $m_1,m_2\in \IZ^+$ with $m_1,m_2\leq \|\tilde{\mu}\|^\eta$. Then the following holds:
\[
    \sum_{\pi\in \mathcal{B}}\overline{A_\pi(m_1,m_2)}L(1/2,\pi,\mathrm{sym}^2)\frac{h(\mu_\pi)}{\CN(\pi)}=\zeta(\frac{3}{2})\frac{1}{192\pi^5}\delta_{\substack{m_1=\square\\m_2=\square}}\frac{\mathrm{vol}_\spec(h)}{(m_1^2m_2)^{1/4}}+O(\|\tilde{\mu}\|^{3-\kappa})\ ,
\]
where
\[
    \mathrm{vol}_\spec(h):=\int_{\Re\mu=0}h(\mu)\spec(\mu)\mathrm{d}\mu
\]
is the spectral volume of the family, with $\spec(\mu)$ being the spectral measure defined in (\ref{eq:def_spec}) and $\CN(\pi)$ being normalizing factors defined in Section \ref{sec:normfactors}.
\end{thm}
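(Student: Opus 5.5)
The plan is the standard one: open the $L$-value with an approximate functional equation, apply the $\GL_3$ Kuznetsov formula to the resulting spectral sums of Fourier coefficients, and extract the main term from the diagonal. The Kloosterman terms should only contribute to the error.

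\textbf{Step 1: approximate functional equation.} Using the functional equation from Lemma \ref{lemma:lfuncisentire}, we write $L(1/2,\pi,\mathrm{sym}^2)$ via an asymmetric approximate functional equation. The Dirichlet series of the symmetric square is
\[
L(s,\pi,\mathrm{sym}^2)=\zeta(3s)\sum_{n_1,n_2}\frac{A_\pi(n_1^2,n_2^2)\text{-type coefficients}}{(n_1^2n_2)^{s}}\ ,
\]
and more precisely $\sum_{n} \lambda_{\mathrm{sym}^2}(n)n^{-s}=\zeta(3s)\sum_{n_1,n_2}A_\pi(n_1^2,n_2)\,(n_1^2n_2)^{-s}$, up to the exact normalisation, which we check. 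The dual side involves $\widetilde\pi$, that is, coefficients $A_\pi(n_2,n_1^2)$.

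Since $\tilde\mu$ is generic, the conductor is $\asymp\|\tilde\mu\|^6$. We choose an asymmetric split, with lengths $X$ for the first sum and $\|\tilde\mu\|^6/X$ for the dual sum. We take $X$ large, so that the dual sum is short, and we make the gamma-factor ratio essentially constant on the ball of radius $\|\tilde\mu\|^\delta$ by Stirling. This gives a weight $V(n_1^2n_2/X)$ that is independent of $\pi$ up to negligible error, after Taylor expansion in $\mu-\tilde\mu$.

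\textbf{Step 2: Kuznetsov.} We use Hecke relations to write $\overline{A_\pi(m_1,m_2)}A_\pi(n_1^2,n_2)$ as a combination of $A_\pi(\cdot,\cdot)$ and $\overline{A_\pi(m_1,m_2)}A_\pi(\cdot,\cdot)$. We then apply the $\GL_3$ Kuznetsov formula with the test function $h$, weighted by $1/\CN(\pi)$; the Eisenstein contributions are bounded separately. The diagonal term is $\delta_{(n_1^2,n_2)=(m_1,m_2)}\mathrm{vol}_\spec(h)$ times constants.

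Only the first sum produces a diagonal, since the dual sum pairs $A(n_2,n_1^2)$ with $(m_1,m_2)$. This forces $m_1$ to be a square, and the dual diagonal would instead force $m_2$ to be a square. The stated main term contains both conditions together with $\zeta(3/2)$. So I expect the $\zeta(3s)$ factor and the duality to combine such that the surviving diagonal requires $m_1=\square$, $m_2=\square$, with $V(\cdot)\approx1$ because $m_i\le\|\tilde\mu\|^\eta\ll X$. The constant $1/(192\pi^5)$ is collected from Kuznetsov's normalisation. I will check this bookkeeping carefully, since the precise form of the Dirichlet coefficients determines exactly which square conditions appear.

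\textbf{Step 3: off-diagonal terms.} The remaining terms are the two degenerate Weyl-element Kloosterman terms and the long-element term. For each, we insert Larsen-type bounds, or Stevens' and Dabrowski--Reeder bounds, for the $\GL_3$ Kloosterman sums, together with strong bounds on the integral transforms $\Phi_w$ of $h$. These transforms are highly oscillatory and concentrated, with $h$ localised at scale $\|\tilde\mu\|^\delta$. We must show that the total contribution is $O(\|\tilde\mu\|^{3-\kappa})$, while $\mathrm{vol}_\spec(h)\asymp\|\tilde\mu\|^{3+2\delta}$. With $n_1^2n_2\le X$, we need the transforms to be negligible unless the moduli are large and the arguments lie in a restricted range; this is where the asymmetry in the choice of $X$ is tuned.

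\textbf{Main obstacle.} The main obstacle is the long-element term. There, bounds for $\Phi_{w_6}$ uniform in two variables and in the spectral parameter are needed, saving more than trivially over the length $X$ of the first sum. I would first try stationary phase on the Mellin–Barnes representation of the transform, showing decay unless $\sqrt{n_1^2n_2 m}/c$ is of size about $\|\tilde\mu\|^{3}$. I would then sum trivially using Kloosterman bounds, optimising $\delta$, $\eta$ and $X$.
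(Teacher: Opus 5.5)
Your outline matches the paper's architecture, but two steps would fail as stated.

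\textbf{The dual side.} The correct expansion is $L(s,\pi,\mathrm{sym}^2)=\zeta(3s)\sum_{b,c}A_\pi(b^2,c^2)(b^2c)^{-s}$, with squares in \emph{both} entries. The $V$-side diagonal $(b^2,c^2)=(m_1,m_2)$ then gives $\delta_{m_1=\square,m_2=\square}(m_1^2m_2)^{-1/4}\mathrm{vol}_\spec(h)$ directly. The factor $\zeta(\tfrac32)$ is just $\zeta(3s)$ at $s=\tfrac12$, so no interplay between $\zeta(3s)$ and duality is involved.

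With these coefficients, your claim that only the first sum produces a diagonal is false. The dual sum pairs $\overline{A_\pi(b^2,c^2)}$ with $A_\pi(m_2,m_1)$, so it has a diagonal at $b^2=m_2$, $c^2=m_1$. This lies well inside the short dual range, and its trivial size is $\asymp\mathrm{vol}_\spec(h)$, as large as the main term.

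It is negligible only because the dual weight contains $\prod_{i\le j}\Gamma_\IR(\tfrac12+\mu_i+\mu_j)/\Gamma_\IR(\tfrac12-\mu_i-\mu_j)$. This has modulus one on $\Re\mu=0$, but its phase is $\approx h_\infty(\mu)$, whose gradient is $\asymp 1$ (Lemma \ref{lemma:derivative_hinf}). Integrating by parts against $h\,\spec$, which varies only on scale $T^\delta$, makes it $O(T^{-B})$.

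Freezing the gamma data on the $T^\delta$-ball via Stirling and Taylor expansion is fine for $\gamma(\tfrac12+u)/\gamma(\tfrac12)$. It is not fine for this phase, which turns by $\asymp T^\delta$ across the ball, so it must stay inside the Kuznetsov test function. That is why the paper multiplies by the polynomial $\mathcal G$ and requires the extra zeros (\ref{eq:zerosLinf}), keeping the weight holomorphic in a wide strip. It also re-proves the negligibility ranges of $\Phi_{w_4}$ and $\Phi_{w_6}$ with the extra factor $e^{\pm h_\infty}$: parts (c), (d) of Lemma \ref{lemma:fine_bound_inttransfrom_voronoi} and part (c) of Lemma \ref{lemma:fine_bound_inttransfrom_long}. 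Only with these does the asymmetry make the moduli sums in $\Sigma_4^{(2)}$ and $\Sigma_6^{(2)}$ empty.

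\textbf{The off-diagonal terms.} Step 3 is quantitatively insufficient. Truncating by the support of $\Phi_{w_6}$ and then summing absolutely with square-root Kloosterman bounds loses about a full power of $T$. In the dominant range $b\asymp 1$, $c\asymp T^3$, $\Phi_{w_6}$ survives for $D_2\ll T^{\eta}$ and $D_1\ll T^{2+\eta}$. There $(b^2c)^{-1/2}\asymp T^{-3/2}$, $|S|/(D_1D_2)\approx D_1^{-1/2}$, and $\Phi_{w_6}$ is heuristically of size $T^{3/2}$. This gives about $T^{-3/2}\cdot T^{3}\cdot T\cdot T^{3/2}=T^{4}$, whereas you need $T^{3-\kappa}$.

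The missing idea is cancellation in the long $c$-sum. After stratifying via Lemma \ref{lemma:b_f_KLsums}(c), the Kloosterman sum depends on $c$ only modulo $D_1$. Poisson summation in $c$ modulo $D_1$ then leaves only frequencies $|c|\ll T^{\alpha/3+\eta/2+\varepsilon}$. It also produces complete sums $\sum_{k\bmod D}e(bk/D)S(k^2,N;D)=\sum_k c_D(k^2+bk+N)$, which have square-root cancellation up to the square-full part of $D$ (Lemma \ref{lemma:exp_sum_bound}).

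Even after this, bounding the resulting transform $\int y^{-s}w(y)\Phi_{w_6}(y^2\Xi_1,\Xi_2)e(yU_1)\,\mathrm{d}y$ trivially leaves a total of about $T^{3+2\delta}\asymp\mathrm{vol}_\spec(h)$, exactly the size of the main term. So the power saving of Lemma \ref{lemma:bound_fourier_long} is indispensable. It comes from a two-variable stationary-phase and measure analysis in the Mellin--Barnes variables $(t_1,t_2)$, not from one-dimensional localization. The Voronoi terms are likewise handled by Poisson summation in $c$ modulo $D_2$.
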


The test function $h$ can be viewed as a holomorphic approximation to the indicator function of the ball mentioned above around $\tilde{\mu}$ with a radius of $\|\tilde{\mu}\|^\delta$.

A predecessor to this result is the work of Blomer and Comtat \cite{BC25}, who obtained an asymptotic formula with a power-saving error term for a twisted first moment of symmetric square $L$-functions on $\GL_3$ in the prime-level aspect. Their family consists of automorphic representations of prime level $p$, and the main term is of size comparable to the volume of the level aspect. In contrast, we keep the level fixed, equal to one, and vary the spectral parameter $\tilde{\mu}$.

Working in the spectral aspect instead of the level aspect changes the main focus points significantly. In the level aspect, a substantial part of the difficulty comes from the ramified finite places: local computations at the prime level, root numbers, the structure of oldforms, and the level-$p$ Kuznetsov formula. In the level-one spectral problem, these finite-place complications mostly disappear and only remain in the treatment of the Kloosterman sums, but in addition, we have to solve a delicate archimedean problem. The test function must simultaneously localize the desired spectral family, interact suitably with the gamma factors in the approximate functional equation for $L(1/2,\pi,\mathrm{sym}^2)$, and allow sufficient control of the integral transforms occurring in the $\GL_3$ Kuznetsov formula.

The main ideas in our work are therefore more closely related to the spectral-aspect analysis of Blomer and Buttcane \cite{BB20}, who proved a subconvexity bound for the central value of the standard $L$-function $L(1/2,\pi)$ on $\GL_3$ at level one. As in their work, a genuine $\GL_3$ spectral problem requires one to exploit various forms of cancellation on the geometric side of the Kuznetsov formula. In particular, the long Weyl element and the associated integral kernel require a fine analysis of oscillatory integral transforms.

The $\GL_3$ Kuznetsov formula itself has a long history and is presented in various forms in the literature, including Bump, Friedberg, and Goldfeld \cite{BFG88}, Li \cite{Li10}, Blomer \cite{Bl13}, Blomer-Buttcane-Maga \cite{BBM17}, and Buttcane \cite{Bu16}. In this paper, we are using the version of Li \cite{Li10} as presented in \cite{Bu16} and \cite{BB20}.

Before going over the main ideas of the proof, we present a few applications of Theorem \ref{thm:MainTheorem}.
As a direct application, we obtain a non-vanishing result:
\begin{cor}
\label{cor:nonvanishing}
    There exists $\delta_0>0$ such that for every $0<\delta<\delta_0$ there exist constants $c,c^\prime>0$ such that for any $\pi\in\mathcal{B}$ with $\|\mu_\pi\|\geq c$ and $\mu_\pi$ in generic position, there exist at least $c^\prime \|\mu_\pi\|^{\frac{3}{2}}$ forms $\pi^\prime$ with $\|\mu_{\pi^\prime}-\mu_\pi\|\leq \|\mu_\pi\|^{\delta}$ and $L(1/2,\pi^\prime,\mathrm{sym}^2)\neq 0$.
\end{cor}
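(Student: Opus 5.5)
We deduce the corollary from Theorem \ref{thm:MainTheorem} with $m_1=m_2=1$ via a first-moment-against-upper-bound argument. Given $\pi$ with $\mu_\pi$ in generic position and $\|\mu_\pi\|\geq c$, set $\tilde{\mu}:=\mu_\pi$ and pick a test function $h\in\tilde{\CH}(A_0,\tilde{\mu},\delta)$ as in Section \ref{subsec:kuznetsov_testfunction}. We may, if needed, shrink the localization radius by a constant factor so that, up to negligible tails, $h$ is supported in the ball of radius $\|\tilde{\mu}\|^\delta$. The theorem then gives
\[
  \sum_{\pi'\in\CB} L(1/2,\pi',\mathrm{sym}^2)\frac{h(\mu_{\pi'})}{\CN(\pi')}
  =\frac{\zeta(3/2)}{192\pi^5}\mathrm{vol}_\spec(h)+O(\|\tilde{\mu}\|^{3-\kappa}).
\]
Since $\spec(\mu)\asymp\|\mu\|^3$ in generic position and $h$ is essentially the indicator of a $2$-dimensional ball of radius $\|\tilde\mu\|^\delta$, we have $\mathrm{vol}_\spec(h)\asymp\|\tilde{\mu}\|^{3+2\delta}$. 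Provided $\kappa>-2\delta$ (automatic since $\kappa>0$), the main term dominates for $\|\tilde\mu\|\ge c$. Hence the left side is $\gg\|\tilde{\mu}\|^{3+2\delta}$.

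The contribution of $\pi'$ far from $\tilde\mu$ must be handled, since $h$ is holomorphic and not compactly supported. For this we use the rapid decay of $h$ away from the ball, together with polynomial bounds for $L(1/2,\pi',\mathrm{sym}^2)$ (convexity, using Lemma \ref{lemma:lfuncisentire}), for $1/\CN(\pi')$, and Weyl's law for $\CB$. These show that such $\pi'$ contribute $O(\|\tilde\mu\|^{-100})$. Next we apply Cauchy--Schwarz to the restricted sum over $\pi'$ in the ball with $L(1/2,\pi',\mathrm{sym}^2)\ne0$, writing $N$ for their number:
\[
  \|\tilde\mu\|^{3+2\delta}\ll \Big(\sum_{\pi'} \frac{|h(\mu_{\pi'})|^2 |L(1/2,\pi',\mathrm{sym}^2)|^2}{\CN(\pi')}\Big)^{1/2}\Big(\sum_{L\neq0}\frac{1}{\CN(\pi')}\Big)^{1/2}.
\]

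The main obstacle is the second-moment upper bound. The claimed count $\|\mu_\pi\|^{3/2}$ is much weaker than the family size $\|\tilde{\mu}\|^{3+2\delta}$, so a crude bound suffices. We would use the large sieve / spectral large sieve for $\GL_3$ in the spectral aspect, coming from the Kuznetsov formula, together with an approximate functional equation of length about $\|\tilde\mu\|^{3/2+\epsilon}$ (the conductor of $\mathrm{sym}^2$ being about $\|\mu\|^6$ in generic position, so the square root is $\|\mu\|^3$). This would give a second moment of size $\ll \|\tilde\mu\|^{3+2\delta+\epsilon}(\|\tilde\mu\|^{2\delta}+\|\tilde\mu\|^{3/2}\cdot\ldots)$. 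If that is not available, we would instead use the pointwise convexity bound $L(1/2,\pi',\mathrm{sym}^2)\ll\|\mu_{\pi'}\|^{3/2+\epsilon}$ directly in the first moment, which needs no Cauchy--Schwarz. Combined with $1/\CN(\pi')\ll\|\tilde\mu\|^\epsilon$ (Section \ref{sec:normfactors}), this gives
\[
\|\tilde\mu\|^{3+2\delta}\ll \sum_{L(1/2,\pi',\mathrm{sym}^2)\ne0}\|\tilde\mu\|^{3/2+\epsilon},
\]
hence $N\gg\|\tilde\mu\|^{3/2+2\delta-\epsilon}\ge\|\mu_\pi\|^{3/2}$ once $\epsilon<2\delta$. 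This simpler route is likely what is intended. The points to verify are the convexity bound for the sym$^2$ $L$-function (from the functional equation and Phragmén--Lindelöf, using Lemma \ref{lemma:lfuncisentire} and that generic position keeps us away from the poles) and that $\CN(\pi')^{-1}\ll\|\mu_{\pi'}\|^{\epsilon}$, via bounds on $L(1,\pi',\mathrm{Ad})$.
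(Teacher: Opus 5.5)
Your final argument is exactly the paper's proof, and the Cauchy--Schwarz/large-sieve detour can simply be deleted: apply Theorem \ref{thm:MainTheorem} with $m_1=m_2=1$ and a test function that is nonnegative on the tempered spectrum, such as (\ref{eq:typical_testfunction}), so that the main term is $\asymp\|\mu_\pi\|^{3+2\delta}$, then bound each surviving term by the convexity bound $\|\mu_\pi\|^{3/2+\varepsilon}$ times $\CN(\pi')^{-1}\ll\|\mu_\pi\|^{\varepsilon}$. One small fix: $h$ only decays beyond distance $\|\tilde\mu\|^{\delta+\varepsilon}$, so shrinking by a constant factor does not put you inside radius $\|\mu_\pi\|^{\delta}$; instead apply the theorem with a slightly smaller $\delta'<\delta$, which still gives at least $\|\mu_\pi\|^{3/2+2\delta'-2\varepsilon}$ nonvanishing forms.
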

This follows from choosing $m_1=m_2=1$ and a test function $h$ that is positive on the tempered spectrum, as in (\ref{eq:typical_testfunction}), together with the convexity bound of
\[
    L(1/2,\pi,\mathrm{sym}^2)\ll\|\mu_\pi\|^{\frac{3}{2}}
\]
and because $\frac{1}{\CN(\pi)}\ll_\varepsilon \|\mu_\pi\|^\varepsilon$; see Lemma \ref{lemma:normfactor_bounds}. This also implies that of the $\Theta(T^5)$ forms with spectral parameter of size $T$ in generic position, at least $T^{\frac{7}{2}}$ satisfy $L(1/2,\pi,\mathrm{sym}^2)\neq 0$.

A more involved application needing the twist by $\overline{A_\pi(m_1,m_2)}$ is the lower bound for all even moments of the expected order of magnitude. This uses a method of Rudnick and Soundararajan \cite{RS05} as presented in \cite{BC25}.
\begin{cor}
\label{cor:lowerbound_moments}
There exists $\delta_0>0$ such that for every $0<\delta<\delta_0$ the following holds: Let $h\in \tilde{\CH}(A_0,\tilde{\mu},\delta)$ be non-negative on the tempered spectrum and $\tilde{\mu}$ in generic position. Then, for any fixed $k\in\IZ^+$, we have
\[
    \Big|\sum_{\pi\in\CB}|L(1/2,\pi,\mathrm{sym}^2)|^{2k}\frac{h(\mu_\pi)}{\CN(\pi)}\Big|\gg_k \mathrm{vol}_\spec(h) (\log \|\tilde{\mu}\|)^{k^2}\ .
\]
\end{cor}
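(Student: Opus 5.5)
The plan is to run the Rudnick--Soundararajan argument in the form used in \cite{BC25}, with Theorem \ref{thm:MainTheorem} as the only family-specific input. We work with the normalized measure $\omega_\pi := h(\mu_\pi)/\CN(\pi)$, which is non-negative because $h\geq 0$ on the tempered spectrum. (Non-tempered forms are rare, and $h$ decays rapidly there, so they can be discarded with a negligible error.)

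\textbf{Step 1: the auxiliary Dirichlet polynomial.} Let $x=\|\tilde\mu\|^{\theta}$ with $\theta>0$ small, chosen in terms of $\eta$ and $k$. Since $L(s,\pi,\mathrm{sym}^2)$ has the Euler factor $\prod_p(1-\lambda_{\mathrm{sym}^2}(p)p^{-s}+\dots)^{-1}$ and $A_\pi(p,p)$ relates to $\lambda_{\mathrm{sym}^2}(p)$ through $\lambda_{\mathrm{sym}^2}(p)=A_\pi(p^2,1)-A_\pi(1,p)$, we define a short Euler-product approximation
\[
A(\pi):=\sum_{n\leq x} \frac{a(n)\,\overline{\beta_\pi(n)}}{\sqrt n},
\]
where $\beta_\pi(n)$ is a Hecke-multiplicative combination of coefficients $A_\pi(m_1,m_2)$ mimicking the symmetric-square coefficients, and $a(n)$ is supported on squarefree $n$ with primes $\le x$. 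Hölder's inequality gives
\[
\sum_\pi L\,\overline{A(\pi)}^{\,k-1}|A(\pi)|^{0}\omega_\pi\;\le\;\Big(\sum_\pi |L|^{2k}\omega_\pi\Big)^{\frac1{2k}}\Big(\sum_\pi |A(\pi)|^{\frac{2k}{2k-1}(k-1)}\omega_\pi\Big)^{\frac{2k-1}{2k}}.
\]
In practice we use the standard pair of quantities
\[
S_1=\sum_\pi L(1/2,\pi,\mathrm{sym}^2)\,\overline{A(\pi)^{k-1}}\,\omega_\pi
\quad\text{and}\quad
S_2=\sum_\pi |A(\pi)|^{2k}\omega_\pi ,
\]
and Hölder yields $|S_1|\le M_{2k}^{1/2k}S_2^{(2k-1)/2k}$, where $M_{2k}$ is the moment to be bounded.

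\textbf{Step 2: evaluating $S_1$.} Expand $\overline{A(\pi)^{k-1}}$ by Hecke multiplicativity into a linear combination of $\overline{A_\pi(m_1,m_2)}$ with $m_1,m_2\le x^{O(k)}\le\|\tilde\mu\|^{\eta}$. Then apply Theorem \ref{thm:MainTheorem} termwise. The main terms, which come from $m_1,m_2$ squares and carry the weight $(m_1^2m_2)^{-1/4}$, assemble into
\[
\mathrm{vol}_\spec(h)\prod_{p\le x}\Big(1+\frac{(k-1)\cdot(\text{coefficient})}{p}+\dots\Big)\asymp \mathrm{vol}_\spec(h)(\log x)^{k^2-?}.
\]
The coefficient $a(p)$ is chosen so that $S_1\gg \mathrm{vol}_\spec(h)(\log x)^{k^2}$ and $S_2\ll \mathrm{vol}_\spec(h)(\log x)^{k^2}$. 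The error terms contribute at most $x^{O(k)}\|\tilde\mu\|^{3-\kappa}$, which is $o(\mathrm{vol}_\spec(h))$ because $\mathrm{vol}_\spec(h)\asymp\|\tilde\mu\|^{3+2\delta}$ up to constants. Here we use that a generic $\tilde\mu$ has $\spec\asymp\|\tilde\mu\|^3$ on a ball of volume $\|\tilde\mu\|^{2\delta}$, provided $\theta$ is small enough.

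\textbf{Step 3: bounding $S_2$.} Here we need an orthogonality (large sieve) type evaluation of $\sum_\pi \overline{A_\pi(m)}A_\pi(m')\omega_\pi$ for short $m,m'$. This is the $\GL_3$ Kuznetsov formula: the diagonal term is $\delta_{m=m'}\,\mathrm{vol}_\spec(h)$ and the Kloosterman terms are power-saving. Equivalently, it is the same Kuznetsov analysis behind Theorem \ref{thm:MainTheorem} with the $L$-function removed. Products are linearized with Hecke relations, so that $S_2\sim \mathrm{vol}_\spec(h)\sum$ (a diagonal Euler product) $\asymp \mathrm{vol}_\spec(h)(\log x)^{k^2}$. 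Combining this with Step 2 gives $M_{2k}\ge |S_1|^{2k}S_2^{1-2k}\gg \mathrm{vol}_\spec(h)(\log\|\tilde\mu\|)^{k^2}$.

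\textbf{Main obstacle.} The hardest part is Step 2 together with the combinatorics behind it. The main term of Theorem \ref{thm:MainTheorem} only detects square indices, with weight $(m_1^2m_2)^{-1/4}$, so $a(p)$ must be tuned so that the resulting Euler factors $1+c_k/p$ from $S_1$ and from $S_2$ produce the same exponent $k^2$. I would first take $\beta_\pi$ built from $A_\pi(p^2,1)$-type coefficients, so that the main term fires at the prime level, and check the local factor computation exactly as in \cite{BC25}, where the same family of local identities appears. A secondary issue is ensuring the Kuznetsov diagonal estimate for $S_2$ is available with the test function $h$ in the generic-position range. If it is not directly stated, I would derive it from the same Kuznetsov machinery used for Theorem \ref{thm:MainTheorem}, where the off-diagonal terms are easier because no approximate functional equation is present.
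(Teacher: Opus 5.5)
Your overall route is the one the paper takes: its proof is the Rudnick--Soundararajan argument of \cite{BC25}, run with Theorem~\ref{thm:MainTheorem} as the twisted first moment, the Kuznetsov diagonal for the moment of the Dirichlet polynomial, and $\mathrm{vol}_\spec(h)$ in place of $p^2$. However, the pairing in your Step~1 is wrong, and the argument fails as written. Your $S_1=\sum_\pi L(1/2,\pi,\mathrm{sym}^2)\,\overline{A(\pi)}^{\,k-1}\omega_\pi$ contains only $k-1$ factors of $A$. So H\"older with exponents $2k$ and $2k/(2k-1)$ gives $|S_1|\le M_{2k}^{1/2k}\big(\sum_\pi|A(\pi)|^{2k(k-1)/(2k-1)}\omega_\pi\big)^{(2k-1)/2k}$, not $M_{2k}^{1/2k}S_2^{(2k-1)/2k}$. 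Nor can any choice of $a(p)$ produce the required $S_1\gg\mathrm{vol}_\spec(h)(\log x)^{k^2}$. For $k=1$ your $S_1$ is $\sum_\pi L(1/2,\pi,\mathrm{sym}^2)\omega_\pi\asymp\mathrm{vol}_\spec(h)$ whatever $a$ is, and Cauchy--Schwarz then gives only $M_2\gg\mathrm{vol}_\spec(h)$, with no logarithm. The correct quantity is
\[
S_1=\sum_{\pi}L(1/2,\pi,\mathrm{sym}^2)\,A(\pi)^{k-1}\,\overline{A(\pi)}^{\,k}\,\omega_\pi .
\]
Its weight has modulus $|A(\pi)|^{2k-1}$, so H\"older produces exactly $S_2^{(2k-1)/2k}$ and $M_{2k}\ge|S_1|^{2k}S_2^{1-2k}$. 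To apply Theorem~\ref{thm:MainTheorem}, expand $A^{k-1}\overline{A}^{\,k}$, using the Hecke relations and $A_\pi(m,n)=\overline{A_\pi(n,m)}$, into a combination of $\overline{A_\pi(m_1,m_2)}$ with $m_1,m_2\le x^{O(k)}\le\|\tilde\mu\|^{\eta}$.

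With this pairing, the tuning of $a(p)$ that you list as the main obstacle is unnecessary, provided $A$ is built from the even coefficients, in line with the paper's remark that these are included in the sums from the start. Take $a\equiv 1$ and $A(\pi)=\sum_{b^2c\le x}A_\pi(b^2,c^2)(b^2c)^{-1/2}$, the truncated Dirichlet series of $L(s,\pi,\mathrm{sym}^2)/\zeta(3s)$ at $s=1/2$. The main term $\zeta(\frac32)\delta_{m_1=\square,m_2=\square}(m_1^2m_2)^{-1/4}$ of Theorem~\ref{thm:MainTheorem} equals $\zeta(\frac32)(b^2c)^{-1/2}$ at $(m_1,m_2)=(b^2,c^2)$. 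This is exactly what orthonormality of the $A_\pi(m_1,m_2)$ (the Kuznetsov diagonal) predicts when $L$ is paired with $\overline{A_\pi(m_1,m_2)}$. Hence the main terms of $S_1$ and $S_2$ are diagonal sums of the same type, and the standard computation gives both $\asymp\mathrm{vol}_\spec(h)(\log x)^{k^2}$.

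Your identity $\lambda_{\mathrm{sym}^2}(p)=A_\pi(p^2,1)-A_\pi(1,p)$ is false. In the paper's normalization $\lambda_{\mathrm{sym}^2}(p)=A_\pi(1,p^2)$, while $A_\pi(p^2,1)-A_\pi(1,p)$ is a sum of squares of Satake parameters. Its extra component $A_\pi(1,p)$ is orthogonal to the $p$-th coefficient of $L$ but inflates every $A$--$\overline{A}$ pairing. A local computation shows that with this $\beta_\pi(p)$ the powers of $\log x$ in $|S_1|^{2k}$ and $S_2^{2k-1}$ cancel exactly, which is precisely the mismatch you were worried about.

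Finally, in Step~3 the Kloosterman terms are negligible, not merely power-saving. All indices are at most $\|\tilde\mu\|^{O(k\theta)}$, so Lemmas~\ref{lemma:fine_bound_inttransfrom_voronoi}(a) and~\ref{lemma:fine_bound_inttransfrom_long}(a) apply. The Eisenstein part can be added by positivity for the upper bound on $S_2$.
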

The exponent is in agreement with the random matrix model for a unitary ensemble of $L$-functions. The proof is analogous to \cite{BC25}*{Proof of Corollary 3}, with the only two differences being that we include the even Fourier coefficients into the sums from the start and that the size of our family, and thus the diagonal term of the Kuznetsov formula and the main term of Theorem \ref{thm:MainTheorem}, are $\mathrm{vol}_\spec(h)$ and not $p^2$.

The left-hand side of Corollary \ref{cor:lowerbound_moments} has the added absolute values, as the non-tempered spectrum could have a non-real contribution. If this is the case, the contribution is negligible and does not affect the bound.

\subsection{Main ideas and sketch}
Let $\pi\in\CB$ with spectral parameter $\tilde{\mu}$ in generic position and let $T:=\|\tilde{\mu}\|$. The test function $h$ will be a holomorphic function that mimics a smooth, compactly supported function with support in a ball of radius $T^\delta$ around $\tilde{\mu}$. The following is just a sketch of the main arguments, and the details are often much more involved. For the sketch, we let $m_1=m_2=1$.

We want to compute
\[
    \sum_{\pi\in \mathcal{B}} \overline{A_\pi(1,1)}L(1/2,\pi,\mathrm{sym}^2)\frac{h(\mu_\pi)}{\CN(\pi)} \ .
\]
The first step is to use an approximate functional equation, Theorem \ref{thm:approxfunctionalequation}, to access the central value. This leads to
\[
    \sum_{\pi\in \mathcal{B}}T^{-\frac{3}{2}}\sum_{\substack{b\asymp1\\c\asymp T^3}} A_{\pi}(b^2,c^2)\overline{A_\pi(1,1)}\frac{h(\mu_\pi)}{\CN(\pi)}\ ,
\]
where here and throughout this sketch we do not display smooth weight functions. To apply the Kuznetsov formula, we have to add the rest of the spectrum, which we need to bound as we do not have positivity. But because the continuous part of the spectrum is spectrally sparser, its contribution is absorbed into the error term.

After applying the Kuznetsov formula, the diagonal gives us the main term, and we need to bound the contribution of the other Weyl elements. The most difficult contribution comes from the long Weyl element. Its contribution is
\[
    T^{-\frac{3}{2}}\sum_{\substack{b\asymp1\\c\asymp T^3}}\sum_{D_1,D_2}\frac{S(c^2,1,1,b^2;D_1,D_2)}{D_1D_2}\Phi_{w_6}\left(\frac{c^2D_2}{D_1^2},\frac{b^2D_1}{D_2^2}\right)\ ,
\]
where $S(c^2,1,1,b^2;D_1,D_2)$ is a certain Kloosterman sum and $\Phi_{w_6}$ is the integral transform of $h$ defined by an integral kernel $K_{w_6}$ that is essentially a double Mellin inverse of a product of Gamma factors and trigonometric functions; see Section \ref{sec:inttransforms}.

Heuristically, the size of $\Phi_{w_6}$ is $T^{\frac{3}{2}}$, but for small values of $y_1,y_2$, the size of $\Phi_{w_6}(y_1,y_2)$ is much smaller, as shown in Section \ref{section:LemmaWeightFunctions}. This enables us to truncate the $D_1,D_2$ sums to $D_2\asymp 1$ and $D_1\asymp T^3$. 

In the remaining cases, we first sum over $b,c$ in dyadic ranges and then perform Poisson summation in $c$ modulo $D_1$. This shrinks the effective range of $c$ down to $T^\varepsilon$. We now have to estimate certain Fourier transforms of the Kloosterman sums and the integral transform. We can prove essentially square root cancellation in the sum over Fourier transforms of the Kloosterman sum because the Fourier transforms barely oscillate but serve as an indicator for certain congruences between the summation variables; see Lemma \ref{lemma:exp_sum_bound}. Using stationary phase methods, we achieve a large enough power saving in the integral transform such that summing over all remaining variables leaves us with something bounded by $T^{3-\kappa}$.

The two main hidden difficulties that appear in practice are:

Firstly, the approximate functional equation contains two parts, one containing an oscillating factor coming from $L_\infty(s,\pi,\mathrm{sym}^2)$. To avoid dealing with the added oscillation in the critical ranges, we introduce a small constant $\alpha$ to obtain an asymmetric approximate functional equation, shortening the sum in the second part. We also have to add a smooth function with specified zeros that cancel the poles of $L_\infty(s,\pi,\mathrm{sym}^2)$ and enable us to shift contours during the proof. 

Secondly, a straightforward stationary phase argument for the integral transform of the long Weyl element is not enough, and one needs to perform a more delicate analysis and use a two-dimensional stationary phase argument to obtain the needed bounds; see Lemma \ref{lemma:bound_fourier_long}.

In addition to these main difficulties, a lot of care is needed to treat non-generic cases, as the above arguments often fail for technical reasons or because some variables are of the wrong size.

\subsection{Setup and notation}
\label{sec:Sym_setup}

We rely on standard background on automorphic forms and Hecke theory on $\GL_n$, as in \cite{Go06}. We use this section to recall the most important results and fix some notations and conventions.

\subsubsection{Satake and spectral parameters}
Let $\pi\in\CB$. We normalize the Langlands parameter $\mu_\pi$ such that  the Ramanujan conjecture predicts $\Re\mu_\pi=0$. For each finite place, we have the Satake parameters $\alpha_p,\beta_p,\gamma_p$ normalized such that the Ramanujan conjecture predicts $|\alpha_p|=|\beta_p|=|\gamma_p|=1$. They satisfy
\begin{equation}
    \label{eq:triv_central_character}
    \mu_1+\mu_2+\mu_3=0,\quad \alpha_p\beta_p\gamma_p=1 \quad(\textnormal{trivial central character})
\end{equation}
and
\begin{equation}
    \label{eq:unitarity}
    \{-\mu_1,-\mu_2,-\mu_3\}=\{\overline{\mu_1},\overline{\mu_2},\overline{\mu_3}\},\quad \{\alpha_p^{-1},\beta_p^{-1},\gamma_p^{-1}\}=\{\overline{\alpha_p},\overline{\beta_p},\overline{\gamma_p}\}\quad (\textnormal{unitarity})
\end{equation}
and
\begin{equation}
    \label{eq:boundstowardsram}
    |\Re\mu_1|,|\Re\mu_2|,|\Re\mu_3|\leq \theta_3,\quad |\alpha_p|,|\beta_p|,|\gamma_p|\leq p^{\theta_3}\quad (\textnormal{bounds towards Ramanujan})
\end{equation}
with $\theta_3=\frac{1}{4}-\frac{1}{130}$ due to Gan \cite{Ga25}; see Lemma \ref{lemma:lfuncisentire} below for more details.

We parametrize the space of possible spectral parameters as follows:

For $0\leq c\leq \infty$ define
\[
    \Lambda_c:=\{\mu\in\IC^3:\mu_1+\mu_2+\mu_3=0,|\Re\mu_i|\leq c\}\ .
\]
We have $\mu_\pi\in\Lambda_{\theta_3}$, where $\theta_3$ is the best known bound towards Ramanujan. In the Lie algebra $\fa_\IC^*=\Lambda_\infty$, we will use two sets of coordinates, one being $\mu$ and the other being $\nu$ defined by
\begin{equation}
\label{eq:def_nu}
    \nu_1=\frac{1}{3}(\mu_1-\mu_2),\quad \nu_2=\frac{1}{3}(\mu_2-\mu_3),\quad \nu_3=\frac{1}{3}(\mu_3-\mu_1)\ .
\end{equation}
We define the Weyl $\mathcal{W}$ group as
\[
    \mathcal{W}=\{I,w_2=\begin{psmallmatrix}1&&\\&&1\\&1&\end{psmallmatrix},w_3=\begin{psmallmatrix}&1&\\1&&\\&&1\end{psmallmatrix},w_4=\begin{psmallmatrix}&1&\\&&1\\1&&\end{psmallmatrix},w_5=\begin{psmallmatrix}&&1\\1&&\\&1&\end{psmallmatrix},w_6=\begin{psmallmatrix}&&1\\&1&\\1&&\end{psmallmatrix}\}\cong S_3\ .
\]
It acts on the $\mu_i$ by permutations.

We fix two constants $0<c_1<c_2$ for the rest of the paper, and any other constants may depend on them. We say $\mu\in \Lambda_\infty$ is in generic position if
\begin{equation}
    \label{eq:genericposition}
    c_1\leq \frac{|\mu_1|}{\|\mu\|},\frac{|\mu_2|}{\|\mu\|},\frac{|\mu_3|}{\|\mu\|}\leq c_2,\quad \textnormal{and}\quad c_1\leq \frac{|3\nu_1|}{\|\mu\|},\frac{|3\nu_2|}{\|\mu\|},\frac{|3\nu_3|}{\|\mu\|}\leq c_2\ .
\end{equation}
This describes two cones in each Weyl chamber away from the walls and self-dual forms and covers 99\% of all Maa\ss forms by Weyl's law \cite{LM09}*{Theorem 1.1} and \cite{BB20}*{Remark (1) p.\ 1442}. 

We fix $\tilde{\mu}$ in generic position with $T:=\|\tilde{\mu}\|$. Throughout the paper, our goal is to understand the behavior of the central value as $T$ grows, and everything should always be viewed for $T$ large enough.

We write $A\asymp B$ if there are constants $c,c^\prime$ such that $cA\leq B\leq c^\prime A$. Thus, we have
\[
    |\mu_i|\asymp|\nu_j|\asymp T
\]
for $1\leq i,j\leq 3$.

We also write $A\asymp_\varepsilon B$ if there are constants $c,c^\prime$ such that $cAT^{-\varepsilon}\leq B\leq c^\prime AT^\varepsilon$. This will often come up because two quantities do not have to be of the same size but cannot differ by a fixed power of $T$. We will call something negligible if it is in $O_B(T^{-B})$ for $B$ large enough such that the term containing the negligible factor can always be absorbed into the error term.

\subsubsection{The symmetric square L-function}
$A_\pi(m_1,m_2)$ are the Fourier-Whittaker coefficients of $\pi$, normalized such that $A_\pi(1,1)=1$. They are multiplicative in both entries and satisfy $A_\pi(m,n)=\overline{A_\pi(n,m)}$, as $\pi$ is everywhere unramified. They also satisfy $A_\pi(m,n)\ll_\varepsilon (mn)^{\theta_3+\varepsilon}$. For more details about the Hecke algebra for $\GL_3$, also in the level-aspect, we refer to \cite{BC25}*{1.2. The Hecke algebra}.

By \cite{BC25}*{Proposition 2 and 3}, we know that
\[
    L(s,\pi,\mathrm{sym}^2)=\zeta(3s)\sum_{b,c}\frac{A_\pi(b^2,c^2)}{(b^2c)^s}
\]
and
\[
    L_\infty(s,\pi,\mathrm{sym}^2)=\prod_{1\leq i\leq j\leq 3}\Gamma_\IR(s-\mu_i-\mu_j),\quad \Gamma_\IR(s)=\Gamma(\frac{s}{2})\pi^{-s/2}\ .
\]  
We also know that the arithmetic conductor and the root number are $1$, and therefore, the functional equation reads
\begin{equation}
    \label{eq:funcequation}
    \Lambda(s,\pi,\mathrm{sym}^2):=L(s,\pi,\mathrm{sym}^2)L_\infty(s,\pi,\mathrm{sym}^2)=\Lambda(1-s,\tilde{\pi},\mathrm{sym}^2),
\end{equation}
where $\tilde{\pi}$ is the contragredient representation.

As mentioned above, this $L$-function is only conjecturally automorphic. To work with it unconditionally, we need the following Lemma.

\begin{lemma}
\label{lemma:lfuncisentire}
Let $\pi$ be a level one cuspidal automorphic representation of $\PGL_3$ over $\IQ$. Then
\[
    |\alpha_p|,|\beta_p|,|\gamma_p|\leq p^{\theta_3},\quad |\Re\mu_1|,|\Re\mu_2|,|\Re\mu_3|\leq \theta_3
\]
with $\theta_3=\frac{1}{4}-\frac{1}{130}$.

And the completed $L$-function $\Lambda(s,\pi,\mathrm{sym}^2)$ is entire except for possible simple poles at $s=0$ and $s=1$ that occur if and only if $\pi$ is the symmetric square of a $\GL_2$ representation.
\end{lemma}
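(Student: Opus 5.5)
Both parts of the statement come from the adjoint lift of Gan \cite{Ga25}. For cuspidal $\pi$ on $\PGL_3$ over $\IQ$, Gan shows that the adjoint representation $\mathrm{Ad}\,\pi$ is automorphic on $\GL_8$ and is an isobaric sum of cuspidal representations. We therefore start from the factorization of Rankin--Selberg $L$-functions
\[
\Lambda(s,\pi\times\widetilde{\pi})=\zeta_\infty(s)\zeta(s)\,\Lambda(s,\mathrm{Ad}\,\pi).
\]
For $\PGL_3$ we also have $\pi\otimes\pi\cong \mathrm{sym}^2\pi\boxplus \wedge^2\pi$ at the level of local parameters, and $\wedge^2\pi\cong\widetilde{\pi}$ because the central character is trivial. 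This gives the identity $\Lambda(s,\pi,\mathrm{sym}^2)\Lambda(s,\widetilde\pi)=\Lambda(s,\pi\times\pi)$ from the introduction.

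\textbf{Bounds toward Ramanujan.} We apply the Luo--Rudnick--Sarnak method (or the refinement of Kim--Sarnak type) to the automorphic adjoint lift on $\GL_8$, or more precisely to its cuspidal constituents. For $\GL_n$ this method gives exponent $\frac12-\frac1{n^2+1}$; for $n=8$ that is $\frac12-\frac1{65}$. The eigenvalues of $\mathrm{Ad}\,\pi$ at $p$ are the ratios $\alpha_p/\beta_p,\dots$, and at $\infty$ the parameters are the differences $\mu_i-\mu_j$. Because $\alpha_p\beta_p\gamma_p=1$ and the parameters satisfy the unitarity relation \eqref{eq:unitarity}, a bound on the largest ratio by $p^{2\theta}$ yields $|\alpha_p|\le p^{\theta}$. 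The same argument with differences of real parts gives the archimedean bound. Halving $\frac12-\frac1{65}$ produces $\theta_3=\frac14-\frac1{130}$. I would quote this exponent directly from \cite{Ga25} rather than re-derive it.

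\textbf{Holomorphy.} We write
\[
\Lambda(s,\pi,\mathrm{sym}^2)=\frac{\Lambda(s,\pi\times\pi)}{\Lambda(s,\widetilde\pi)}.
\]
The numerator is the Rankin--Selberg $L$-function $\Lambda(s,\pi\times\pi)$, which by Jacquet--Piatetski-Shapiro--Shalika and Shahidi is entire unless $\pi\cong\widetilde\pi$; in that case it has simple poles exactly at $s=0,1$. The denominator $\Lambda(s,\widetilde\pi)$ is entire. So the quotient is meromorphic, and its only possible poles off $\{0,1\}$ lie at zeros of $\Lambda(s,\widetilde\pi)$. Excluding those poles is the main obstacle.

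To handle it, I would use the nonvanishing of $L(s,\widetilde\pi)$ on $\Re s\ge1$ together with absolute convergence of the Euler product for $\Re s>1/2+2\theta_3$, which is less than $1$. Since $2\theta_3<1/2$, the product defining $L(s,\pi,\mathrm{sym}^2)$ converges absolutely and is nonzero in a half-plane $\Re s>1-\epsilon$ with $\epsilon>0$. The functional equation \eqref{eq:funcequation} then covers the reflected region. The gamma factors have no zeros, so in these regions the only possible poles come from $L_\infty$, and the bounds on $\Re\mu$ place these on the correct side.

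For the critical strip I would instead use Gan's description of the adjoint lift to get a second factorization into entire pieces, avoiding the quotient by $\Lambda(s,\widetilde\pi)$ altogether. If $\pi$ is not self-dual, $\mathrm{sym}^2\pi$ should be expressible through Rankin--Selberg products of automorphic objects known to be entire, for instance via $\Lambda(s,\pi\times\mathrm{Ad}\,\pi)$ and $\Lambda(s,\pi\times\pi)$ combined with the Ginzburg or Bump--Ginzburg integral representation of the symmetric square. If $\pi$ is self-dual, then on $\PGL_3$ it is the adjoint lift of some $\sigma$ on $\GL_2$, that is $\pi=\mathrm{sym}^2\sigma\otimes\omega^{-1}$. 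In that case $\mathrm{sym}^2\pi$ decomposes as $\mathrm{sym}^4\sigma\otimes\omega^{-2}\boxplus 1$. The factor $\zeta$ supplies the simple poles at $0,1$, and the $\mathrm{sym}^4$ factor is entire by Kim--Shahidi. This identifies the poles and also gives the "if and only if". The step I expect to require the most care is the entire continuation in the non-self-dual case; there I would lean on the Bump--Ginzburg integral representation, which for level one is known to give exactly this holomorphy.
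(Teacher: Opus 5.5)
Your treatment of the bounds toward Ramanujan is fine and agrees with the paper, which simply quotes \cite{Ga25}. The gap is in the holomorphy part, in the critical strip $0<\Re s<1$ for non-self-dual $\pi$. This is exactly the case you leave open, and neither of your suggestions closes it.

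First, $L(s,\pi\times\mathrm{Ad}\,\pi)$ factors as $L(s,\pi)L(s,\widetilde\pi,\mathrm{sym}^2)$ times a degree-$15$ $L$-function. So it introduces a new unknown rather than removing one.

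Second, the Bump--Ginzburg type integral representation does not give holomorphy of the \emph{completed} $L$-function, even at level one. For $\GL_3$ this is Patterson--Piatetski-Shapiro, and Banks \cite{Ba97} in the form the paper uses. What it gives is that the partial function $\Lambda^{(S)}(s,\pi,\mathrm{sym}^2)$ with $S=\{2,\infty\}$ is entire apart from $s=0,1$. Multiplying back $L_2(s,\pi,\mathrm{sym}^2)L_\infty(s,\pi,\mathrm{sym}^2)$, which has no zeros, can create poles inside the strip. Your proposal has no argument excluding them.

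The missing step, which is the whole content of the paper's proof beyond the citations, goes as follows. The poles of $\prod_{i\le j}(1-\alpha_i\alpha_j2^{-s})^{-1}$ and of $\prod_{i\le j}\Gamma_\IR(s-\mu_i-\mu_j)$ lie in $\Re s\le 2\theta_3=\frac12-\frac1{65}$. Hence $\Lambda(s,\pi,\mathrm{sym}^2)$ is holomorphic on $\Re s>\frac12-\frac1{65}$ except possibly at $s=1$. The same holds for $\widetilde\pi$. The functional equation $\Lambda(s,\pi,\mathrm{sym}^2)=\Lambda(1-s,\widetilde\pi,\mathrm{sym}^2)$ then transfers this to $\Re s<\frac12+\frac1{65}$, apart from $s=0$. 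The two half-planes overlap precisely because $\theta_3<\frac14$. This is why the Ramanujan bound is packaged into the lemma; the Luo--Rudnick--Sarnak exponent $\frac25$ for $\GL_3$ would not suffice.

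With this argument your quotient by $\Lambda(s,\widetilde\pi)$ becomes unnecessary. The paper takes the ``if and only if'' from \cite{PPS89}. Your self-dual route via $\mathrm{sym}^2(\mathrm{Ad}\,\sigma)=\mathrm{sym}^4\sigma\otimes\omega^{-2}\boxplus 1$ is a reasonable alternative for that part. It does require citing the nontrivial theorem that a self-dual cuspidal $\pi$ on $\PGL_3$ is an adjoint lift (Ramakrishnan).

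Finally, the claim that the Euler product converges absolutely and is nonzero in some half-plane $\Re s>1-\epsilon$ is false. The threshold $\frac12+2\theta_3$ controls only the prime-power terms $p^{-ks}$ with $k\ge 2$. Absolute convergence holds only for $\Re s>1$, and nonvanishing to the left of $\Re s=1$ would be an unproved zero-free strip. This does no harm, since your quotient already handles $\Re s\ge 1$, but it should be removed.
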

\begin{proof}
   \cite{Ga25}*{Theorem 12.1} states that $\theta_3=\frac{1}{4}-\frac{1}{130}$.

    We have from \cite{Ba97}*{Theorem 7} that $\Lambda^{(S)}(s,\pi,\mathrm{sym}^2)$ is entire except for possible poles at $s=1$ and $s=0$, where $S$ consists of the places $2$ and $\infty$. The above bound on $\theta_3$ implies that the local factors at $p=2$ and $\infty$  of the symmetric square $L$-function can only have poles in the region $\Re s<\frac{1}{2}-\frac{1}{65}$. 
    Thus, except for $s=0$ and $s=1$, the completed $L$-function $\Lambda(s,\pi,\mathrm{sym}^2)$ can only have poles in the region $\Re s<\frac{1}{2}-\frac{1}{65}$. Using the functional equation (\ref{eq:funcequation}), the existence of such a pole at $s$ would imply that $\Lambda(s,\tilde{\pi},\mathrm{sym}^2)$ has a pole at $1-s$, but $\Re(1-s)>\frac{1}{2}$, which contradicts our above arguments that also hold for the contragredient representation.
    
      The claim that the poles at $s=0$ and $s=1$ are simple and appear if and only if $\pi$ is a symmetric square lift from $\GL_2$ is proven in \cite{PPS89}*{p.\ 552}.
\end{proof}
\begin{rem}
    If we instead work with a form of a higher level or not over $\IQ$, we also have to account for the ramified places and treat their $L$-factors separately. An example of how this can be achieved can be found in \cite{BC25}*{Lemma 1 and Lemma 4}.
\end{rem}

\section{The Kuznetsov formula}
We quote the Kuznetsov formula by Li \cite{Li10} as presented in \cite{Bu16}*{Theorem 4} and \cite{BB20}*{Section 3}. 
\subsection{Normalizing factors}
\label{sec:normfactors}
We define the normalizing factors as in \cite{BB20}*{3.1 Normalizing factors}. To be precise, we use the normalizing factors $\CN(\pi)$ defined as the square of the ratio between Hecke eigenvalues and Fourier coefficients of $L^2$-normalized automorphic forms appearing in the spectral decomposition. For $\pi\in\mathcal{B}$, this is
\[
    \CN(\pi):=\|\phi\|^2\prod_{i=1}^3\cos\Big(\frac{3}{2}\pi \nu_{\pi,j}\Big)\,
\]
where $\phi$ is the arithmetically normalized Maa{\ss} form generating $\pi$, and $\nu$ is as in (\ref{eq:def_nu}).

We have 
\[
    \CN(\pi)\asymp \operatorname{res}_{s=1}L(s,\pi\times\tilde{\pi}) = L(1,\pi,\mathrm{Ad})\ ,
\]
see \cite{BB20}*{3.1. Normalizing factors}.

For non-cuspidal $\pi$, the proper analog of $\CN(\pi)$ is given by 
\[
    \frac{1}{16}\prod_{j=1}^3|\zeta(1+3\nu_j)|^2
\]
if $\pi$ is generated by a maximal Eisenstein series, and
\[
    8L(1,\mathrm{Ad}u)|L(1+3s,u)|^2
\]
if $\pi$ is generated by a maximal Eisenstein series $E(z,\frac{1}{2}+s,u)$ associated with an $\SL_2(\IZ)$ cusp form $u$; see \cite{Bu84}*{Chapter 8} and \cite{BB20}*{3.1. Normalizing Factors}.

\begin{lemma}
\label{lemma:normfactor_bounds}
For $\pi\in\CB$ we have
\[
    \|\mu_\pi\|^{-\varepsilon}\ll_\varepsilon \CN(\pi)\ll_\varepsilon\|\mu_\pi\|^{\varepsilon}\ .
\]
\end{lemma}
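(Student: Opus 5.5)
We reduce the claim to bounds for the adjoint $L$-function at the edge of the critical strip. By the relation $\CN(\pi)\asymp L(1,\pi,\mathrm{Ad})$ recalled above from \cite{BB20}, with implied constants absolute (independent of $\mu_\pi$), it suffices to prove
\[
    \|\mu_\pi\|^{-\varepsilon}\ll_\varepsilon L(1,\pi,\mathrm{Ad})\ll_\varepsilon \|\mu_\pi\|^{\varepsilon}.
\]
Equivalently, writing $L(s,\pi\times\tilde\pi)=\zeta(s)L(s,\pi,\mathrm{Ad})$, it suffices to bound the residue of the Rankin--Selberg $L$-function at $s=1$ above and below by $C^{\pm\varepsilon}$. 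Here $C\asymp(1+\|\mu_\pi\|)^{8}$ is the analytic conductor of $\pi\times\tilde\pi$ near $s=1$. The level is one, so only the archimedean part contributes.

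For the \emph{upper bound}, I would follow the standard Li/Brumley style argument.
\begin{enumerate}[(i)]
\item The Dirichlet coefficients $\lambda_{\pi\times\tilde\pi}(n)$ are non-negative, since $L(s,\pi\times\tilde\pi)$ is the Rankin--Selberg $L$-function of $\pi$ with its contragredient.
\item Apply a smoothed Perron formula at $\Re s=1+\varepsilon$ to $\sum_n \lambda_{\pi\times\tilde\pi}(n)e^{-n/X}$.
\item Shift the contour to the left. The residue at $s=1$ appears, and the shifted integral is controlled by the convexity bound for $L(s,\pi\times\tilde\pi)$. This bound follows from Phragmén--Lindelöf, using the functional equation and the known archimedean gamma factors, with $|\Re\mu_i|\le\theta_3<1/4$ from Lemma \ref{lemma:lfuncisentire} keeping the gamma factors away from poles.
\item Choose $X=C^{\varepsilon}$. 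The residue is then bounded by $X^{1+\varepsilon}$ times bounded quantities.
\end{enumerate}
Alternatively, one can simply quote \cite{Li10} or the bound $L(1,\pi,\mathrm{Ad})\ll \|\mu_\pi\|^{\varepsilon}$ used in \cite{BB20}.

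For the \emph{lower bound}, which I expect to be the main obstacle, I would use the method of Hoffstein--Lockhart, Goldfeld--Hoffstein--Lieman, and Brumley (in particular Brumley's and Lapid's results on $L(1,\pi,\mathrm{Ad})$ for $\GL_n$).
\begin{enumerate}[(i)]
\item Consider the auxiliary $L$-function $L(s,\Pi\times\tilde\Pi)$ with $\Pi=1\boxplus\pi\boxplus\tilde\pi$. It factors as
\[
    \zeta(s)\,L(s,\pi)\,L(s,\tilde\pi)\,L(s,\pi\times\tilde\pi)\cdots,
\]
has non-negative coefficients, and has a pole of order at least $2$ at $s=1$.
\item Compare a smoothed coefficient sum with its residue. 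Non-negativity and the first coefficient equal to $1$ force the main term to dominate. This yields a lower bound for the leading Laurent coefficient in terms of a negative power of $\log C$.
\item The danger is a Siegel-type real zero near $1$. This is excluded for $\GL_3$ adjoint $L$-functions by Hoffstein--Ramakrishnan type arguments. Moreover, the factorization $L(s,\pi,\mathrm{Ad})$ together with the adjoint lift to $\GL_8$ of \cite{Ga25} mentioned in the introduction gives automorphy of $\mathrm{Ad}\,\pi$, so one can appeal to the zero-free region for cuspidal $\GL_8$ $L$-functions at $s=1$ (Brumley's theorem: $L(1,\Pi)\gg C(\Pi)^{-\varepsilon}$, or Lapid's result for $L(1,\pi,\mathrm{Ad})$).
\end{enumerate}
Citing Brumley (or \cite{Li10}, as \cite{BB20} does) then gives
\[
    L(1,\pi,\mathrm{Ad})\gg_\varepsilon C^{-\varepsilon}\gg\|\mu_\pi\|^{-8\varepsilon}.
\]
Renaming $\varepsilon$ finishes the proof.

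In practice I would keep the proof short. It consists of the identity $\CN(\pi)\asymp L(1,\pi,\mathrm{Ad})$, followed by citing \cite{Li10} for the upper bound and Brumley's lower bound for the lower bound. The only things to check are that the analytic conductor of $\mathrm{Ad}\,\pi$ is polynomial in $\|\mu_\pi\|$, and that the level-one hypothesis removes finite-place contributions.
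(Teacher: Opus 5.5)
Your reduction to $L(1,\pi,\mathrm{Ad})$ agrees with the paper, and so does your decision to quote the upper bound; the paper takes $\CN(\pi)\ll_\varepsilon\|\mu_\pi\|^{\varepsilon}$ from \cite{BB20}. Your self-contained sketch of the upper bound would not work as written. A convexity bound near $\Re s=1$ that is uniform in $\pi$ needs a uniform bound for $L(1+\eta,\pi\times\tilde\pi)$ with $\eta$ small, and without Ramanujan that is the same problem. Starting instead from a line where the bounds towards Ramanujan give absolute convergence costs a fixed positive power of $C$, which $X=C^{\varepsilon}$ cannot absorb. So citing is the right call.

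The genuine gap is in the lower bound, which is the actual content of the lemma. Your auxiliary $\Pi=1\boxplus\pi\boxplus\tilde\pi$ is the one used for zero-free regions of the standard $L(s,\pi)$. In $L(s,\Pi\times\tilde\Pi)$ the adjoint $L$-function enters only through $L(s,\pi\times\tilde\pi)L(s,\tilde\pi\times\pi)=\zeta(s)^2L(s,\pi,\mathrm{Ad})^2$. A real zero of $L(s,\pi,\mathrm{Ad})$ near $1$ therefore has multiplicity $2$, while the pole at $s=1$ has order $3$ (more if $\pi$ is self-dual). The Goldfeld--Hoffstein--Lieman counting then excludes nothing. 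Your step (ii) is also circular: the positivity argument gives a lower bound only once you know $F(\beta)\le 0$ at some $\beta=1-c/\log C$, that is, only once the exceptional zero is already ruled out.

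The ways you propose to rule that zero out do not deliver it. ``Hoffstein--Ramakrishnan type arguments'' are not an available theorem for adjoint $L$-functions of general $\GL_3$ forms. The generic zero-free region for automorphic $L$-functions on $\GL_8$ still allows one exceptional real zero for self-dual forms, and $\mathrm{Ad}\,\pi$ is self-dual, so it leaves open exactly the zero that matters. Brumley's theorems are \emph{narrow} zero-free regions, tapering like a negative power of the conductor. They give only $L(1,\pi,\mathrm{Ad})\gg C^{-N}$, i.e.\ $\CN(\pi)\gg\|\mu_\pi\|^{-B}$, which is precisely what the paper says one gets ``without more work''. I know of no quotable result $L(1,\Pi)\gg_\varepsilon C(\Pi)^{-\varepsilon}$ for self-dual $\Pi$ on $\GL_8$; for such $\Pi$ it is a statement about exceptional zeros.

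What is missing is an actual zero-free region $\sigma>1-c/\log\|\mu_\pi\|$ for $L(s,\pi,\mathrm{Ad})$ itself, real segment included. You did identify the right input, the automorphy of $\mathrm{Ad}\,\pi$ from \cite{Ga25}. The paper uses it to run the argument of \cite{Sa04} for $L(s,\pi,\mathrm{Ad})$.

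Given such a region, do the Hoffstein--Lockhart step with $F(s)=L(s,\pi\times\tilde\pi)=\zeta(s)L(s,\pi,\mathrm{Ad})$, not with $1\boxplus\pi\boxplus\tilde\pi$. This $F$ has non-negative coefficients and a simple pole with residue $L(1,\pi,\mathrm{Ad})$. At $\beta=1-c/\log\|\mu_\pi\|$ one has $F(\beta)<0$, since $\zeta(\beta)<0<L(\beta,\pi,\mathrm{Ad})$. This yields $L(1,\pi,\mathrm{Ad})\gg 1/\log\|\mu_\pi\|$, which is more than the lemma needs.
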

\begin{proof}
    The upper bound is immediate from the relation to $L(1,\pi,\mathrm{Ad})$ and is found in \cite{BB20}*{3.1. Normalizing factors}. The lower bound is quite delicate, and without more work, one would only expect $\|\mu_\pi\|^{-B}$ for some large $B$. But due to \cite{Ga25}, the adjoint lift to $\GL_8$ is automorphic, and thus we can use the arguments from \cite{Sa04}, specifically \cite{Sa04}*{(4)}, to prove a standard zero-free region for $L(s,\pi,\mathrm{Ad})$, implying a logarithmic lower bound for the value $L(1,\pi,\mathrm{Ad})$.
\end{proof}

\subsection{Choice of test function}
\label{subsec:kuznetsov_testfunction}
Our test functions are defined on $\Lambda_\infty$. With the spectral measure $\spec(\mu)$ defined by
\begin{equation}
\label{eq:def_spec}
    \spec(\mu)\mathrm{d}\mu:=\prod_j\Big(3\nu_j\tan(\frac{3}{2}\pi\nu_j)\Big)\mathrm{d}\mu\ ,
\end{equation}
where $\mathrm{d}\mu=\mathrm{d}\mu_i\mathrm{d}\mu_j$, $i\neq j$ is the standard measure on the hyperplane $\mu_1+\mu_2+\mu_3=0$.

Recall that $\tilde{\mu}$ is fixed with $\|\tilde{\mu}\|= T$ large.

\begin{definition}
For a large $A_0>0$, define $\CH(A_0)$ as the class of test functions $h$ that are Weyl group invariant, holomorphic on $\Lambda_{A_0}$, have rapid decay in this strip, and have zeros at
\begin{equation}
\label{eq:zerosKuznetsov}
    \mu_i-\mu_j=n\equiv 1\mod 2,\quad |n|\leq A_0,\quad 1\leq i<j\leq 3\ .
\end{equation}

We further define $\CH(A_0,\tilde{\mu}):=\CH(A_0,\tilde{\mu},\delta)$ as the set of functions $h\in \CH(A_0)$ such that
\begin{equation}
    \label{eq:rapid_decay_awayfrommu}
    h(\mu)\ll_{A,\varepsilon} (\min_{w\in \mathcal{W}}\|\mu-w(\tilde{\mu})\|)^{-A}
\end{equation}
for $\min_{w\in \mathcal{W}}\|\mu-w(\tilde{\mu})\|\geq T^{\delta+\varepsilon}$, $\varepsilon>0$, and all $A>0$
and \begin{equation}
    \label{eq:bound_h_individuel}
    h(\mu)= O(1)
\end{equation}
for all $\mu$ with $\Re\mu=0$
and
\begin{equation}
\label{eq:bound_derivatives_h}
    D_jh(\mu)\ll_j T^{-j\delta}
\end{equation}
for every differential operator $D_j$ of order $j\geq 0$\ .
\end{definition}

We observe that (\ref{eq:rapid_decay_awayfrommu}) and (\ref{eq:bound_h_individuel}) imply
\begin{equation}
    \label{eq:BoundSpec_h}
    \int_{\Re\mu=0}|h(\mu)\spec(\mu)|\mathrm{d}\mu\ll_{\varepsilon} T^{3+2\delta+\varepsilon}
\end{equation}
for all $\varepsilon>0$.

Lastly, we define $\tilde{\CH}(A_0,\tilde{\mu}):=\tilde{\CH}(A_0,\tilde{\mu},\delta)$ as the set of functions $h\in \CH(A_0,\tilde{\mu},\delta)$ that have additional zeros at
\begin{equation}
\label{eq:zerosLinf}
    \frac{1}{2}+\mu_i+\mu_j=n\equiv 0\mod 2,\quad |n|\leq A_0,\quad 1\leq i\leq j\leq 3\ .
\end{equation}

A typical function we have in mind is
\begin{equation}
    \label{eq:typical_testfunction}
    h(\mu):=\frac{1}{|\mathcal{W}|}\sum_{w\in \mathcal{W}}h_w(\mu):=\frac{1}{|\mathcal{W}|}\sum_{w\in \mathcal{W}}\Big(\Psi(\frac{w(\mu)-\tilde{\mu}}{T^{\delta}})\frac{P(\mu)}{P(\tilde{\mu})}\Big)^2\ ,
\end{equation}
with
\begin{equation*}
    P(\mu):=\Big(\prod_{\substack{|n|\leq A_0\\n\equiv 1\mod 2}}\prod_{1\leq i<j\leq 3}(\mu_i-\mu_j-n)\Big)\Big(\prod_{\substack{|n|\leq A_0\\ n\equiv 0\mod 2}}\prod_{1\leq i\leq j\leq 3}(\frac{1}{2}+ \mu_i+\mu_j-n)\Big)
\end{equation*}
and
\begin{equation*}
    \Psi(\mu):=\exp(\mu_1^2+\mu_2^2+\mu_3^2)\ .
\end{equation*}

A function needs to be in $\CH(A_0)$ to apply the Kuznetsov formula and be able to subsequently shift the $\mu$-contours sufficiently far. The set $\CH(A_0,\tilde{\mu})$ represents functions that are essentially supported in a ball of radius $T^\delta$ around $w(\tilde{\mu})$ for $w\in \mathcal{W}$, are of bounded size in this ball, and do not oscillate too much. The last condition is needed to later bound oscillatory integrals without taking the oscillation of the test function into account. The typical function from above is non-negative and real on $\mu$ with $\Re \mu=0$.

The set of additional zeros in (\ref{eq:zerosLinf}) is needed for the approximate functional equation in Theorem \ref{thm:approxfunctionalequation}.

\subsection{Integral transforms}
\label{sec:inttransforms}
We define the integral kernels as in \cite{BB20}*{Section 3.3}. For $s\in \IC$, $\mu\in \Lambda_\infty$ define the meromorphic function 
\[
    \tilde{G}^{\pm}(s,\mu):=\frac{\pi^{-3s}}{12288\pi^{7/2}}\Big(\prod_{j=1}^3\frac{\Gamma(\frac{1}{2}(s+\mu_j))}{\Gamma(\frac{1}{2}(1-s+\mu_j))}\pm i\prod\frac{\Gamma(\frac{1}{2}(1+s-\mu_j))}{\Gamma(\frac{1}{2}(2-s+\mu_j))}\Big)
\]
and for $s=(s_1,s_2)\in\IC^2$, $\mu\in\Lambda_\infty$ define the meromorphic function
\[
    G(s,\mu):=\frac{1}{\Gamma(s_1+s_2)}\prod_{j=1}^3\Gamma(s_1-\mu_j)\Gamma(s_2+\mu_j)
\]
together with the trigonometric functions
\begin{align*}
    S^{++}(s,\mu):=&\frac{1}{24\pi^2}\cos(\frac{3}{2}\pi\nu_1)\cos(\frac{3}{2}\pi\nu_2)\cos(\frac{3}{2}\pi\nu_3)\\
    S^{+-}(s,\mu):=&-\frac{1}{32\pi^2}\frac{\cos(\frac{3}{2}\pi\nu_2)\sin(\pi(s_1-\mu_1))\sin(\pi(s_2+\mu_2))\sin(\pi(s_2+\mu_3))}{\sin(\frac{3}{2}\pi\nu_1)\sin(\frac{3}{2}\pi\nu_3)\sin(\pi(s_1+s_2))}\\
    S^{-+}(s,\mu):=&-\frac{1}{32\pi^2}\frac{\cos(\frac{3}{2}\pi\nu_1)\sin(\pi(s_1-\mu_1))\sin(\pi(s_1-\mu_2))\sin(\pi(s_2+\mu_3))}{\sin(\frac{3}{2}\pi\nu_2)\sin(\frac{3}{2}\pi\nu_3)\sin(\pi(s_1+s_2))}\\
    S^{--}(s,\mu):=&\frac{1}{32\pi^2}\frac{\cos(\frac{3}{2}\pi\nu_3)\sin(\pi(s_1-\mu_2))\sin(\pi(s_2+\nu_2))}{\sin(\frac{3}{2}\pi\nu_2)\sin(\frac{3}{2}\pi\nu_1)}\ .
\end{align*}

Using these functions, we define the integral kernels as:
\begin{definition}
    For $y\in \IR\setminus\{0\}$ and $\epsilon:=\mathrm{sgn}(y)$, let
    \[
        K_{w_4}(y,\mu):=K_{w_4}^\epsilon(y,\mu):=\int_{(0)}|y|^{-s}\tilde{G}^\epsilon(s,\mu)\frac{\mathrm{d}s}{2\pi i}
    \]
    and for $y=(y_1,y_2)\in (\IR\setminus\{0\})^2$ and $\epsilon_i:=\mathrm{sgn}(y_i)$, let
    \[
        K_{w_6}(y,\mu):= K_{w_6}^{\epsilon_1,\epsilon_2}(y,\mu):=\int_{(0)}\int_{(0)}|4\pi^2y_1|^{-s_1}|4\pi^2y_2|^{-s_2}G(s,\mu)S^{\epsilon_1,\epsilon_2}(s,\mu)\frac{\mathrm{d}s_1\mathrm{d}s_2}{(2\pi i)^2}\ .
    \]
\end{definition}
In this definition and the rest of the paper, we follow the Barnes integral convention that the contour should pass to the right of the poles of Gamma factors of the form $\Gamma(s_j+a)$ and to the left of the poles of Gamma factors of the form $\Gamma(-s_j+a)$. Furthermore, the unbounded part of the contour should be shifted such that  everything is absolutely convergent. This is possible because there is no exponential growth in any of the variables. For $K_{w_4}$, this is obvious, and for $K_{w_6}$, the exponential behavior away from poles was computed in \cite{BB20}*{(3.3)} as $\exp(-\frac{\pi}{2}h^{\epsilon_1,\epsilon_2})(\Im (s),\Im (\mu))$, with
\begin{align}
    \label{eq:def_expbehaviour_w6}
    h^{\epsilon_1,\epsilon_2}(t,r):=-&\epsilon_2|r_1-r_2|-\epsilon_1\epsilon_2|r_1-r_3|-\epsilon_1|r_2-r_3|-\epsilon_1\epsilon_2|t_1+t_2|\\+&\epsilon_1\epsilon_2|t_1-r_1|+\epsilon_1|t_1-r_2|+|t_1-r_3|+|t_2+r_1|+\epsilon_2|t_2+r_2|+\epsilon_1\epsilon_2|t_2+r_3|\ .\nonumber
\end{align}
By brute force, one can check that this is always non-negative. Thus, in both cases, it is enough to shift the unbounded part of the $s$, respectively $s_1$, and $s_2$ contour to $-\delta<0$. 

\begin{definition}
    We define the relevant integral transforms by
    \begin{align*}
        \Phi_{w_4}(y):=\Phi_{w_4}^h(y)&:=\int_{\Re\mu=0}h(\mu)K_{w_4}(y,\mu)\spec(\mu)\mathrm{d}\mu,\\
        \Phi_{w_5}(y):=\Phi_{w_5}^h(y)&:=\int_{\Re\mu=0}h(\mu)K_{w_4}(-y,-\mu)\spec(\mu)\mathrm{d}\mu,\\
        \Phi_{w_6}(y_1,y_2):=\Phi_{w_6}^h(y_1,y_2)&:=\int_{\Re\mu=0}h(\mu)K_{w_6}((y_1,y_2),\mu)\spec(\mu)\mathrm{d}\mu\ .
    \end{align*}
The test function $h$ is implicit in the notation, and we will only specify it if it is not clear from the context.
\end{definition}
\subsection{Kloosterman sums}
For $\GL_3$, there are two relevant types of Kloosterman sums corresponding to the Weyl elements $w_4$ and $w_6$. The Kloosterman sum corresponding to $w_5$ can be expressed using the one corresponding to $w_4$ as in the case of the integral transform.
\begin{definition}
    For $n_1,n_2,m_1,m_2\in\IZ\setminus\{0\}$ and $D_1,D_2\in\IZ^+$, define
    \begin{equation*}
        \tilde{S}(n_1,n_2,m_1;D_1,D_2)=\sum_{\substack{x_1\mod D_1,\ x_2\mod D_2\\ (x_1,D_1)=(x_2,D_2/D_1)=1}}e\Big(n_2\frac{\overline{x}_1x_2}{D_1}+m_1\frac{\overline{x}_2}{D_2/D_1}+n_1\frac{x_1}{D_1}\Big)
    \end{equation*}
    for $D_1\mid D_2$, and
    \begin{align*}
        &S(n_1,n_2,m_1,m_2;D_1,D_2)\\=&\sum_{\substack{B_1,C_1\mod D_1\\ B_2,C_2\mod D_2\\ D_1C_2+B_1B_2+D_2C_1\equiv 0 \mod D_1D_2\\ (B_j,C_j,D_j)=1}}e\Big(\frac{n_1B_1+m_1(Y_1D_2-Z_1B_2)}{D_1}+\frac{n_2B_2+m_2(Y_2D_1-Z_2B_1)}{D_2}\Big)\ ,
    \end{align*}
    where $Y_jB_j+Z_jC_j\equiv 1 \mod D_j$.
\end{definition}

We quote \cite{BFG88}*{Appendix}, \cite{St87}*{Section 5}, and \cite{KN22}*{Theorem 5.10}.
\begin{lemma}

\label{lemma:b_f_KLsums}
\begin{enumerate}[(a)]
    \item We have
    \[
    \tilde{S}(n_1,n_2,m_1)\ll ((m_1,D_2/D_1)D_1^2,(n_1,n_2,D_1)D_2)(D_1D_2)^\varepsilon\ .
    \]
    \item We have
    \[
    S(n_1,n_2,m_1,m_2;D_1,D_2)\ll (D_1D_2)^{1/2+\varepsilon}((D_1,D_2)(m_1n_1,[D_1,D_2])(m_2n_2,[D_1,D_2]))^{1/2}\ .
    \]
    \item We also have the parametrization
    \[
        S(n_1,n_2,m_1,m_2;D_1,D_2)=\sum_{D_0\mid (D_1,D_2)}S(n_1,n_2,m_1,m_2;\frac{D_1}{D_0},\frac{D_2}{D_0},D_0)\ ,
    \]
    with the fine Kloosterman sum $S(n_1,n_2,m_1,m_2;D_1,D_2,D_0)$ defined by
    \begin{align*}
        &S(n_1,n_2,m_1,m_2;D_1,D_2,D_0)\\= &D_0\sum_{\substack{x,y\mod D_0\\ xy\equiv 1\mod D_0\\ m_1D_2+n_2D_1y\equiv 0\mod D_0}}S(n_1,\frac{m_1D_2+n_2D_1y}{D_0};D_1)S(m_2,\frac{m_1D_2x+n_2D_1}{D_0};D_2)\ .
    \end{align*}
\end{enumerate}
\end{lemma}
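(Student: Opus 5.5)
The lemma is quoted from the literature, so the proof amounts to matching each part with its source and checking that the normalizations agree. I would take the parts in order.

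For (a), the plan is to evaluate $\tilde{S}$ locally. By the Chinese remainder theorem, $\tilde{S}(n_1,n_2,m_1;D_1,D_2)$ is twisted multiplicative in the pair $(D_1,D_2)$, so it suffices to treat prime powers $D_1=p^a\mid D_2=p^b$. If the sum over $x_2$ is taken first, with $x_1$ fixed, it becomes a Ramanujan-type sum modulo $D_2/D_1$ mixed with a phase modulo $D_1$. One then bounds trivially in one of two ways:
\begin{itemize}
\item executing the $x_2$-sum as a Ramanujan sum gives $(m_1,D_2/D_1)$ times $D_1^2$;
\item executing the $x_1$-sum gives $D_2$ times $(n_1,n_2,D_1)$.
\end{itemize}
Taking the minimum of the two gives the stated bound, and the divisor-type factors account for $(D_1D_2)^\varepsilon$. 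This is exactly the estimate in \cite{BFG88}*{Appendix}, so I would cite it after checking that our definition of $\tilde S$ agrees with theirs up to the relabelling $n_1\leftrightarrow n_2$.

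For (b), the plan is to use Stevens' stratification \cite{St87}*{Section 5}. One reduces to prime powers through the multiplicativity $S(\cdot;D_1D_1',D_2D_2')=S(\cdot;D_1,D_2)S(\cdot;D_1',D_2')$ for coprime moduli, with suitably twisted arguments. At a prime power, Stevens bounds the sum by counting points on the Plücker-coordinate variety, which gives square-root cancellation up to the gcd factors. These gcd factors with $m_1n_1$ and $m_2n_2$ enter only through degenerate strata. I would cite this bound, in the clean form stated in \cite{BFG88} and refined in later work, and not reprove it.

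For (c), I would follow \cite{KN22}*{Theorem 5.10}. The idea is to write $D_0=(D_1,D_2)$-part. One decomposes the Bruhat cell representatives $\begin{psmallmatrix}B_j&C_j\end{psmallmatrix}$ according to the gcd structure, solves the congruence $D_1C_2+B_1B_2+D_2C_1\equiv0 \mod D_1D_2$ in terms of a pair $x,y$ with $xy\equiv1\mod D_0$, and recognizes the remaining sums over $B_1 \mod D_1/D_0$ and $B_2 \mod D_2/D_0$ as classical $\GL_2$ Kloosterman sums. The factor $D_0$ comes from the free lifts in the parametrization.

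The main obstacle is bookkeeping rather than mathematics: the conventions for $S(n_1,n_2,m_1,m_2;D_1,D_2)$, namely the ordering of the arguments and the use of $w_6$ versus its inverse, differ between \cite{BFG88}, \cite{St87} and \cite{KN22}. The step to do carefully is verifying that the sum defined above coincides with theirs, possibly after swapping $(n_1,m_1)\leftrightarrow(n_2,m_2)$. Since all three statements are symmetric under this swap, this suffices.
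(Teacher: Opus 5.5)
Your route is the paper's. The paper does not prove the lemma at all; it quotes \cite{BFG88}*{Appendix}, \cite{St87}*{Section 5} and \cite{KN22}*{Theorem 5.10}, which is exactly the attribution you propose. However, two of your supporting claims are false and should not be relied on.

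First, your ``two trivial evaluations'' for (a) only go through when $(D_1,D_2/D_1)=1$. Take $(D_1,D_2)=(p,p^2)$ and $p\nmid n_1n_2m_1$. The phase then depends on $x_2$ only modulo $p$, and substituting $x_2\mapsto x_1x_2$ gives $\tilde S=p\sum_{x,y\bmod p}^{*}e\big((n_1x+n_2y+m_1\overline{xy})/p\big)=p\,\mathrm{Kl}_3(n_1n_2m_1;p)$, a hyper-Kloosterman sum. Both inner sums are Kloosterman sums, not Ramanujan sums. The bound $p^{2+\varepsilon}$ asserted by (a) in this case amounts to the Deligne-strength estimate $|\mathrm{Kl}_3|\ll p^{1+\varepsilon}$; Weil alone gives only $p^{5/2}$.

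Second, the statements are not all invariant under $(n_1,m_1)\leftrightarrow(n_2,m_2)$. The bound in (b) is invariant. But for coprime $D_1,D_2$, where only $D_0=1$ occurs, the right-hand side of (c) is $S(n_1,m_1D_2;D_1)\,S(m_2,n_2D_1;D_2)$, and this changes under that swap. The symmetry that $S$ actually has also interchanges $D_1$ and $D_2$. So a convention mismatch with \cite{KN22} cannot be dismissed as you suggest; it would have to be checked.
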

\subsection{The Kuznetsov formula}
We state the Kuznetsov formula as in \cite{Bu16}*{Theorem 4} and \cite{BB20}*{Section 3.4}.

\begin{lemma}[The Kuznetsov Formula]
    Let $h\in \CH(A_0)$. Then we have
    \[
        \int_\pi \overline{A_\pi(m_1,m_2)}A_\pi(n_1,n_2)\frac{h(\mu_\pi)}{\CN(\pi)}\mathrm{d}\pi=\Delta+\Sigma_4+\Sigma_5+\Sigma_6\ ,
    \]
    where
    \begin{align*}
        \Delta&=\delta_{n_1,m_1}\delta_{n_2,m_2}\frac{1}{192\pi^5}\int_{\Re\mu=0}h(\mu)\spec(\mu)\mathrm{d}\mu\ ,\\
        \Sigma_4&=\sum_{\epsilon=\pm}\sum_{\substack{D_2\mid D_1\\m_2D_1=n_1D_2^2}}\frac{\tilde{S}(-\epsilon n_2,m_2,m_1;D_2,D_1)}{D_1D_2}\Phi_{w_4}(\frac{\epsilon m_1m_2n_2}{D_1D_2})\ ,\\
        \Sigma_5&=\sum_{\epsilon=\pm}\sum_{\substack{D_1\mid D_2\\m_1D_2=n_2D_1^2}}\frac{\tilde{S}(\epsilon n_1,m_1,m_2;D_1,D_2)}{D_1D_2}\Phi_{w_5}(\frac{\epsilon m_1m_2n_1}{D_1D_2})\ ,\\
        \Sigma_6&= \sum_{\epsilon_1\epsilon_2=\pm}\sum_{D_1,D_2}\frac{S(\epsilon_2 n_2,m_2,m_1,\epsilon_1 n_1;D_1,D_2)}{D_1D_2}\Phi_{w_6}(-\frac{\epsilon_2 m_1n_2D_2}{D_1^2},-\frac{\epsilon_1 m_2n_1D_1}{D_2^2})
    \end{align*}
\end{lemma}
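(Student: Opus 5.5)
The statement is the $\GL_3$ Kuznetsov formula, which is quoted rather than proved here. My plan is therefore to justify it by reduction to the literature: match our normalizations with those of Li \cite{Li10} and Buttcane \cite{Bu16}*{Theorem 4}, following the dictionary already set up in \cite{BB20}*{Section 3}.

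First I will fix the conventions. The spectral measure $\int_\pi \mathrm{d}\pi$ runs over the cuspidal spectrum $\CB$, the maximal Eisenstein series twisted by $\SL_2(\IZ)$ cusp forms, and the minimal Eisenstein series. Each piece is weighted by $1/\CN(\pi)$, using the normalizing factors of Section \ref{sec:normfactors}. Since $\CN(\pi)$ is, by definition, the square of the ratio between the $L^2$-normalized Fourier coefficient and the Hecke eigenvalue, the spectral side of Buttcane's formula becomes exactly $\overline{A_\pi(m_1,m_2)}A_\pi(n_1,n_2)h(\mu_\pi)/\CN(\pi)$.

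Next I will check the test-function class. Buttcane requires $h$ to be Weyl-invariant and holomorphic with rapid decay on a tube $\Lambda_{A_0}$, with zeros at $\mu_i-\mu_j\in 2\IZ+1$. The zeros are needed to cancel the poles of the $\tan(\frac32\pi\nu_j)$ factors in $\spec(\mu)$ and of the kernels when contours are shifted. These are precisely the defining conditions of $\CH(A_0)$, so the formula applies for $A_0$ large.

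Then I will handle the geometric side term by term. The identity Weyl element gives $\Delta$, with the constant $\frac{1}{192\pi^5}$ coming from the Plancherel measure $\spec(\mu)\mathrm{d}\mu$ in our normalization. The elements $w_4$ and $w_5$ contribute only when the compatibility conditions $m_2D_1=n_1D_2^2$, respectively $m_1D_2=n_2D_1^2$, hold, and they produce the degenerate Kloosterman sums $\tilde S$ against $\Phi_{w_4}$ and $\Phi_{w_5}$. The long element $w_6$ gives the full Kloosterman sum $S$ against $\Phi_{w_6}$.

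The kernels must be converted to the Mellin–Barnes forms of Section \ref{sec:inttransforms}. This is done in \cite{Bu16} and \cite{BB20}*{Section 3.3}, where the $w_6$ kernel is split by the signs $\epsilon_1,\epsilon_2$ into the four trigonometric factors $S^{\pm\pm}$. The sign conventions inside the arguments of $S$ and $\Phi_{w_6}$ come from the character twists in the Whittaker functions.

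The main obstacle is purely bookkeeping. The constants, the sign conventions $\epsilon_i$, and the order of the entries of the Kloosterman sums (for example, $(\epsilon_2n_2,m_2,m_1,\epsilon_1n_1)$) have to be matched consistently with \cite{BB20}. I would simply verify that our definitions coincide verbatim with \cite{BB20}*{Section 3.4}, which is stated there for exactly this normalization of $\CN(\pi)$ and $\spec$.
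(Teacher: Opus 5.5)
Your proposal is correct and matches the paper, which does not prove this lemma but quotes Li's $\GL_3$ Kuznetsov formula in the form of \cite{Bu16}*{Theorem 4} and \cite{BB20}*{Section 3.4}. The class $\CH(A_0)$, the normalizing factors $\CN(\pi)$, the measure $\spec(\mu)$ and the kernels are all set up to agree with \cite{BB20}, so your plan of reducing to the literature and checking that the normalizations coincide is exactly the paper's approach.
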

and the integral over $\pi$ is interpreted as a sum over the discrete part of the spectrum and an integral over the continuous part. For more information regarding the spectral decomposition, we refer to \cite{Bu16}.

\subsection{Rough bounds and absolute convergence}
\label{sec:roughboundsabsconv}
We usually want to truncate the sums over $D_1,D_2$ at $D_1,D_2\ll T^B$ for some sufficiently large $B$ at the cost of a negligible error, provided that $n_1,n_2,m_1,m_2\ll T^{C}$ for some fixed $C>0$. To achieve this, we need the following rough bound on the integral transforms.
\begin{lemma}[{\cite{BB20}*{Lemma 1}}]
\label{lemma:rough_bound_inttransform}
With $h\in \CH(A_0)$ also having rapid decay away from $\tilde{\mu}$ (\ref{eq:rapid_decay_awayfrommu}) and bounded spectral norm (\ref{eq:BoundSpec_h}) we have
\[
    \Phi_{w_4}(y)\ll |y|^{1/10}T^{O(1)},\quad \Phi_{w_5}(y)\ll |y|^{1/10}T^{O(1)},\quad  \Phi_{w_6}(y)\ll |y_1y_2|^{3/5}T^{O(1)}\ .
\]
\end{lemma}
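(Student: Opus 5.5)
The plan is to prove each bound by shifting the Mellin contours in the definitions of $K_{w_4}$ and $K_{w_6}$ to a fixed positive real part, and then integrating the resulting pointwise bound against $|h(\mu)\spec(\mu)|$. For $\Phi_{w_6}$, I would move both $s_1$ and $s_2$ to $\Re s_1=\Re s_2=-3/5$. Then $|4\pi^2 y_1|^{-s_1}|4\pi^2y_2|^{-s_2}$ contributes $|y_1y_2|^{3/5}$ up to a constant. For $\Phi_{w_4}$ and $\Phi_{w_5}$, I would move $s$ to $\Re s=-1/10$, which gives the factor $|y|^{1/10}$.

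Before shifting, I would not keep $\mu$ on $\Re\mu=0$. Using holomorphy of $h$ on $\Lambda_{A_0}$, I would first move the $\mu$-contour, by Cauchy's theorem, to a fixed generic real part $\Re\mu=\mu_0$ with $\|\mu_0\|$ small. The point is that the poles of $\Gamma(s_1-\mu_j)$ and $\Gamma(s_2+\mu_j)$ must not obstruct the shift $\Re s_i\to-3/5$. With a suitable choice of $\mu_0$, the arguments $s_1-\mu_j$ and $s_2+\mu_j$ have real parts in $(-1,0)$, and the trigonometric factors $S^{\epsilon_1\epsilon_2}$ only add poles at controlled locations.

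This step needs some care on two points.
\begin{enumerate}[(a)]
\item Moving $\mu$ affects $\spec(\mu)$ and the normalizing $\cos/\tan$ factors in $S^{\pm\pm}$. The poles of $\tan(\frac32\pi\nu_j)$ lie at $\mu_i-\mu_j$ odd, and $h$ vanishes there by (\ref{eq:zerosKuznetsov}). Hence $h\cdot\spec$ stays holomorphic on the strip.
\item Rapid decay of $h$ in $\Lambda_{A_0}$, together with (\ref{eq:rapid_decay_awayfrommu}), controls the horizontal segments.
\end{enumerate}
Alternatively, any pole of $G$ crossed in the $s$-shift gives a residue term of the form $|y|^{\mu_j+\dots}$. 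After integrating in $\mu$, such a term is bounded by $|y|^{\Re(\cdot)}T^{O(1)}$ with the same or better exponent, so either route works.

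On the shifted contours, I would bound $|G(s,\mu)S^{\epsilon_1\epsilon_2}(s,\mu)|$ by Stirling's formula. The exponential factors combine to $\exp(-\frac{\pi}{2}h^{\epsilon_1,\epsilon_2}(\Im s,\Im\mu))$ with $h^{\epsilon_1,\epsilon_2}\ge0$ by (\ref{eq:def_expbehaviour_w6}), up to polynomial factors. So the integrand is at most polynomial in $(1+|s|)(1+\|\mu\|)$. Integrability in $s$ is the main obstacle. On the region where $h^{\epsilon_1,\epsilon_2}$ vanishes identically, only polynomial decay in $\Im s$ remains. There I would count powers: the six Gammas over $\Gamma(s_1+s_2)$ and the sines yield, at real part $-3/5$, a net power of $|s|$ that is negative enough, and I would check this exponent is summable along the flat directions. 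If it is not, I would instead use the freedom in the Barnes convention to tilt the unbounded part of the contour, as described after (\ref{eq:def_expbehaviour_w6}).

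Finally, I would integrate over $\mu$. The resulting polynomial factors in $\|\mu\|\asymp T$ near $w(\tilde\mu)$, combined with (\ref{eq:BoundSpec_h}) and rapid decay away from the Weyl orbit, give $T^{O(1)}$. This completes the plan for all three bounds. The $w_5$ case follows from the $w_4$ case via $\mu\mapsto-\mu$, using Weyl invariance of $h$ and $\spec$.
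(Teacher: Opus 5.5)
There is a genuine gap, and it sits at the contour shift, which is the heart of this lemma.

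\textbf{Why your contour shift fails.} Under the Barnes convention fixed after the definition of $K_{w_4}$ and $K_{w_6}$, the contour must pass to the \emph{right} of every pole of $\Gamma(s_1-\mu_j)$ and $\Gamma(s_2+\mu_j)$. Near those poles this means $\Re(s_1-\mu_j)>0$ and $\Re(s_2+\mu_j)>0$.

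\begin{itemize}
\item A straight line on which $\Re(s_1-\mu_j)$ and $\Re(s_2+\mu_j)$ lie in $(-1,0)$ has already passed to the left of the leading poles $s_1=\mu_j$ and $s_2=-\mu_k$. It therefore computes $K_{w_6}$ minus those residues, not $K_{w_6}$.
\item No choice of $\mu_0$ avoids this. An admissible line $\Re s_1=-3/5$ would need $\Re\mu_j<-3/5$ for all $j$, which contradicts $\mu_1+\mu_2+\mu_3=0$.
\item The same happens for $K_{w_4}$ at $\Re s=-1/10$. The poles $s=-\mu_j$ of $\Gamma(\frac12(s+\mu_j))$ satisfy $\max_j\Re(-\mu_j)\ge 0$, so at least one is crossed.
\end{itemize}

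These residues are not negligible. They equal $|4\pi^2y_1|^{-\mu_j}|4\pi^2y_2|^{\mu_k}$ (resp.\ $|y|^{\mu_j}$) times Gamma and trigonometric factors in $\mu$. So they are essentially of size $1$ in $y$ when $\Re\mu$ is near $0$. They are the dominant part of the kernel as $y\to 0$. That is exactly the regime where the lemma is used, namely to truncate large $D_1,D_2$. Your fallback claim, that after the $\mu$-integration they come ``with the same or better exponent,'' is therefore false for a small, uniform $\mu_0$.

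\textbf{What is missing: a term-by-term treatment.} First extract the leading residues. Then shift the $\mu$-contour separately in each residue term, by the target amount and in a direction adapted to that term:
\begin{itemize}
\item $\Re\mu_j=1/10$ for the term $|y|^{\mu_j}$;
\item $\Re\mu_j=-3/5$ and $\Re\mu_k=3/5$ for the term $|y_1|^{-\mu_j}|y_2|^{\mu_k}$.
\end{itemize}
Weyl invariance of $h\,\spec$ reduces this to representative terms. Three things then have to be checked.

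(i) Only off-diagonal double residues, $j\neq k$, may occur. A diagonal term $|y_1/y_2|^{-\mu_j}$ cannot be made small by any $\mu$-shift.
\begin{itemize}
\item In the $(++)$ and $(--)$ cases, $1/\Gamma(s_1+s_2)$ cancels $\Gamma(s_2+\mu_j)$ after taking the residue at $s_1=\mu_j$.
\item In the $(\pm,\mp)$ cases, the sine factors in $S^{\pm\mp}$ remove the poles of complementary families of Gamma factors.
\end{itemize}

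(ii) The residue coefficients contain factors such as $\Gamma(\mu_j-\mu_i)$ (resp.\ $\Gamma(\frac12(\mu_i-\mu_j))$) and the $\nu$-trigonometric quotients. Some of their poles lie on $\Re\mu=0$ itself. All of them must be cancelled on the swept region. The cancellation comes from three sources:
\begin{itemize}
\item the zeros of $\spec$, in particular the double zero of $3\nu\tan(\frac32\pi\nu)$ at $\nu=0$;
\item the cosine factors in $S^{\epsilon_1,\epsilon_2}$;
\item the zeros (\ref{eq:zerosKuznetsov}) of $h$.
\end{itemize}

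(iii) In the mixed terms (a residue in one variable, a line integral in the other), the remaining contour must stay strictly between consecutive pole families throughout the $\mu$-shift. This is where your condition ``real parts in $(-1,0)$'' is the right one.

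Once this is done, convergence of the $s$-integrals is routine, via $h^{\epsilon_1,\epsilon_2}\ge 0$ and Stirling, and your final $\mu$-integration step applies. Note that the paper gives no argument of its own here; it simply quotes \cite{BB20}*{Lemma 1}.
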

If we assume that $m_1,m_2,n_1,n_2$ are bounded by $T^{O(1)}$, then together with the bounds from Lemma \ref{lemma:b_f_KLsums} (a) and (b), we see that the sums over Kloosterman terms in $\Sigma_4,\Sigma_5,\Sigma_6$ are absolutely convergent and that we can truncate the $D_1,D_2$-sums as we wanted at $D_1,D_2\ll T^B$.

\section{Analytic Preliminaries}
In this section, we compile some frequently used analytic results for future reference.

\subsection{Oscillatory integrals}
Our main tool for dealing with oscillatory integrals will be integration by parts. The following two Lemmata make this explicit.
\begin{lemma}[{\cite{BB20}*{Lemma 2}}]
\label{lemma:intbyparts}
Let $Y\geq 1,X,Q,U,R>0$, and suppose that $w$ is a smooth function with support on some interval $[\alpha,\beta]$ satisfying
\[
    w^{(j)}(t)\ll_j XU^{-j}\ .
\]
Also suppose $H$ is a smooth function on $[\alpha,\beta]$ such that 
\[
    H^\prime(t)\gg R,\quad H^{(j)}(t)\ll_jYQ^{-j} \textnormal{ for }j\geq 2\ .
\]
Then
\[
    I=\int_\IR w(t)\exp(i H(t))\mathrm{d}t\ll_B (\beta-\alpha)X\big((QR/\sqrt{Y})^{-B}+(RU)^{-B}\big)
\]
for any $B>0$.
\end{lemma}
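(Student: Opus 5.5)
The statement is Lemma \ref{lemma:intbyparts}, the standard non-stationary phase estimate. I would prove it by repeated integration by parts with the differential operator
\[
    \mathcal{D}f := -\Big(\frac{f}{iH'}\Big)' ,
\]
which satisfies $\int w e^{iH}\,\mathrm{d}t = \int \mathcal{D}^B(w)\, e^{iH}\,\mathrm{d}t$. The boundary terms vanish because $w$ is smooth with support in $[\alpha,\beta]$, so $w$ and all its derivatives vanish at the endpoints. It then suffices to bound $\mathcal{D}^B w$ pointwise by
\[
    X\big((QR/\sqrt{Y})^{-B}+(RU)^{-B}\big)
\]
and integrate trivially over an interval of length $\beta-\alpha$.

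To bound $\mathcal{D}^B w$, I would show by induction on $B$ that it is a linear combination, with bounded coefficients, of terms of the form
\[
    \frac{w^{(\nu_0)}\, H^{(\nu_1)}\cdots H^{(\nu_\ell)}}{(H')^{B+\ell}},
    \qquad \nu_0+\sum_{i=1}^{\ell}(\nu_i-1)=B,\quad \nu_i\ge 2 .
\]
Each application of $\mathcal{D}$ either differentiates $w$, differentiates one of the $H^{(\nu_i)}$ (raising $\nu_i$ by one), or differentiates a power of $H'$, which produces a new factor $H''/H'$. In every case the weighted degree $\nu_0+\sum(\nu_i-1)$ increases by exactly one, which closes the induction.

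Each such term is bounded, using $w^{(j)}\ll_j XU^{-j}$, $H^{(j)}\ll_j YQ^{-j}$ for $j\ge2$, and $H'\gg R$, by
\[
    X U^{-\nu_0}\prod_{i}\frac{YQ^{-\nu_i}}{R}\cdot R^{-B}
    = X\,(RU)^{-\nu_0}\prod_i \frac{Y}{(QR)^{\nu_i}} .
\]
The key observation is that for $\nu_i\ge 2$ and $QR\ge\sqrt{Y}$,
\[
    \frac{Y}{(QR)^{\nu_i}} \le (QR/\sqrt{Y})^{-(\nu_i-1)} ,
\]
which is equivalent to $Y^{(\nu_i+1)/2}\ge Y$ after clearing denominators (using $Y\ge1$ and $\nu_i\ge 1$). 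The product is therefore at most
\[
    X\,(RU)^{-\nu_0}(QR/\sqrt{Y})^{-(B-\nu_0)} \le X\big((RU)^{-B}+(QR/\sqrt{Y})^{-B}\big),
\]
since a product $a^{-k}b^{-(B-k)}$ is dominated by $\max(a,b)^{-B}$ when $a,b\ge1$. If instead $QR<\sqrt{Y}$ or $RU<1$, the claimed bound is at least $(\beta-\alpha)X$, while trivially $I\le(\beta-\alpha)\sup|w|\ll(\beta-\alpha)X$, so it holds automatically.

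The only step needing care is the combinatorial bookkeeping of the weighted degree in the induction; once that structure is established, the estimate reduces to the elementary inequality above.
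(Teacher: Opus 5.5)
Your proof is correct and is the standard argument for this lemma (the paper gives no proof of its own and only quotes \cite{BB20}): iterate $\mathcal{D}f=-(f/iH')'$, track the terms $w^{(\nu_0)}\prod_i H^{(\nu_i)}/(H')^{B+\ell}$ of weighted degree $B$, and compare each factor $Y/(QR)^{\nu_i}$ with $(QR/\sqrt{Y})^{-(\nu_i-1)}$. Two justifications are misstated, though both inequalities they support are true and the argument is unaffected. First, that comparison reduces to $QR\,Y^{(\nu_i-1)/2}\ge Y$; this follows from $QR\ge\sqrt{Y}$ together with $Y^{\nu_i/2}\ge Y$, so it needs $\nu_i\ge 2$ rather than $\nu_i\ge 1$. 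Second, $a^{-k}b^{-(B-k)}$ is bounded by $\min(a,b)^{-B}\le a^{-B}+b^{-B}$, not by $\max(a,b)^{-B}$.
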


\begin{lemma}[{\cite{BB20}*{Lemma 3}}]
\label{lemma:intbyparts_asymptotic}
    Let $x,t\in \IR$, and let $w$ be a fixed smooth function with compact support on $\IR^+$. Let 
    \[
        I:=\int_{\IR^+}w(y)e(xy)y^{-it}\mathrm{d}y
    \]
    and let $B>0$. There exists a smooth function $\tilde{w}_x(y)$ with compact support on $\IR^+$ satisfying $\tilde{w}_x^{(j)}(y)\ll_j 1$ with the following property: If $|x|+|t|\geq 100$, then
    \[
        I=|x|^{-1/2}\Big|\frac{t}{2\pi ex}\Big|^{-it}\tilde{w}_x\Big(\frac{t}{x}\Big)+O_B((|x|+|t|)^{-B})\ .
    \]
\end{lemma}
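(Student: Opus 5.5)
The plan is to prove the asymptotic for $I$ by a stationary phase analysis in the variable $y$, after isolating the phase. Write $w(y)e(xy)y^{-it}=w(y)\exp(iH(y))$ with
\[
    H(y)=2\pi xy-t\log y,\qquad H^\prime(y)=2\pi x-\frac{t}{y},\qquad H^{\prime\prime}(y)=\frac{t}{y^2}.
\]
Since $w$ is a fixed smooth function with support in some $[a,b]\subset\IR^+$, everything takes place on a compact interval bounded away from $0$. The proof splits into two cases, according to whether the stationary point $y_0=t/(2\pi x)$ lies in or near the support of $w$.

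\emph{Case 1: non-stationary range.} Suppose $y_0\notin[a/2,2b]$, which includes the cases where $x$ and $t$ have incompatible signs or are of very different magnitudes. Then $|H^\prime(y)|\gg |x|+|t|$ on $\mathrm{supp}\,w$. Also $H^{(j)}(y)\ll_j |t|$ for $j\ge 2$. I will apply Lemma \ref{lemma:intbyparts} with
\[
    X=U=Q=1,\qquad Y=|t|+1,\qquad R\asymp |x|+|t|.
\]
This gives $I\ll_B(|x|+|t|)^{-B}$, because $R/\sqrt{Y}\gg (|x|+|t|)^{1/2}$. In this case I set $\tilde w_x(t/x)=0$, or more precisely I arrange the cutoff in the definition of $\tilde w_x$ so that it vanishes there, and the claim holds.

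\emph{Case 2: stationary range.} Suppose $y_0\in[a/2,2b]$. Then $|t|\asymp|x|$, so both are $\gg 1$ because $|x|+|t|\ge 100$. The phase has a single nondegenerate critical point, with $H^{\prime\prime}(y_0)=t/y_0^2\asymp |x|$. The standard stationary phase expansion gives
\[
    I=e^{iH(y_0)}\sqrt{\frac{2\pi}{|H^{\prime\prime}(y_0)|}}\,e^{\pm i\pi/4}\sum_{k<K}c_k(y_0)\,x^{-k}+O(|x|^{-K}).
\]
Here each $c_k$ is a combination of derivatives of $w$ at $y_0$, with coefficients depending smoothly on $y_0$. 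It remains to identify the leading factors.
\begin{itemize}
    \item The phase is $e^{iH(y_0)}=e^{it}\,y_0^{-it}=\big|\frac{t}{2\pi e x}\big|^{-it}$, using $y_0=t/(2\pi x)>0$.
    \item The amplitude prefactor is $\sqrt{2\pi y_0^2/|t|}=|x|^{-1/2}\sqrt{y_0}$.
\end{itemize}
All remaining factors are absorbed into one function of $y_0=t/x$:
\[
    \tilde w_x(u):=\sqrt{u/(2\pi)}\,e^{\pm i\pi/4}\sum_{k<K}c_k(u/2\pi)x^{-k}.
\]
After rescaling the argument by $2\pi$, and multiplying by a smooth cutoff supported near $[2\pi a/2,\,2\pi\cdot 2b]$, this function is compactly supported on $\IR^+$. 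Its derivatives are bounded uniformly in $x$, since $|x|\gg1$ and each $c_k$ is built from derivatives of the fixed function $w$.

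\emph{Main obstacle.} The hard step is the uniformity of the stationary phase error in the transition region, where $y_0$ is near the edge of $[a/2,2b]$, and the requirement that the error be $O_B((|x|+|t|)^{-B})$ rather than merely $O(|x|^{-K})$. To handle this, I will take $K$ depending on $B$, and I will use a smooth partition of unity around $y_0$. On the piece near $y_0$, I will apply the explicit expansion via Taylor expansion of $H$. On the complement, $|H^\prime|\gg|x|\cdot|y-y_0|$, and Lemma \ref{lemma:intbyparts} bounds that contribution.
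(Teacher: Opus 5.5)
The paper gives no proof of its own and simply quotes \cite{BB20}*{Lemma 3}; your argument is the standard stationary-phase proof of that result and is correct. It uses Lemma~\ref{lemma:intbyparts} when $t/(2\pi x)\notin[a/2,2b]$, and otherwise a one-dimensional stationary phase expansion at $y_0=t/(2\pi x)$, uniform in the parameter $t/x$, with the lower-order terms absorbed into $\tilde w_x$. Two small points tidy it up. First, no extra cutoff is needed: each $c_k(y_0)$ involves only derivatives of $w$ at $y_0$, so $\tilde w_x$ is automatically supported in $2\pi\,\mathrm{supp}(w)$ and vanishes throughout your Case~1. Second, for $|x|<100/(1+4\pi b)$ the stationary case cannot occur, so there you simply take $\tilde w_x=0$, which keeps the derivative bounds uniform in $x$.
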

\begin{rem}
    Although $\tilde{w}_x$ does depend on $x$, the bounds on its derivatives and the size of the support do not. 
\end{rem}

\subsection{Stirling's formula}
To treat the Gamma factors, we will mainly use Stirling's formula for the Gamma function. For $\sigma\in\IR$ fixed, real $t$ with $|t|\geq 10$ and any $B>0$, we have
\begin{equation}
    \label{eq:stirling}
    \Gamma(\sigma+it)=|t|^{\sigma-\frac{1}{2}}\exp\Big(-\frac{\pi}{2}|t|+it\log\frac{|t|}{e}\Big)g_{\sigma,B}+O_{\sigma,B}(\exp(-\frac{\pi}{2}|t|)|t|^{-B})\ ,
\end{equation}
with
\[
    |t|^j\frac{\partial^j}{\partial t^j}g_{\sigma,B}\ll_{j,\sigma,B}1
\]
for $j\geq 0$.

This will mainly be used to show that the exponential part of a product of Gamma factors is decaying or constant, and to estimate the polynomial growths correctly.

We will also use the Digamma function $\psi$ defined as
\[
    \psi(z):=\frac{\Gamma^\prime(z)}{\Gamma(z)}\ .
\]
For $\mathrm{arg}(z)=\pi-\varepsilon^\prime$, it satisfies for any $B>0$
\begin{equation}
    \label{eq:stirling_digamma}
    \psi(z)=\log(z)+g_B(z)+O_{B,\varepsilon^\prime}(|z|^{-B})\ ,
\end{equation}
with $g_B$ a rational function satisfying
\[
    |z|^j\frac{\partial^j}{\partial z^j}g_B(z)\ll_{j} |z|^{-1}
\]
for $j\geq 0$. And also
\begin{equation}
    \label{eq:bound_derivative_digamma}
    |z|^j\frac{\partial^j}{\partial z^j}(\psi(z)-\log(z))\ll_{j,\varepsilon^\prime} |z|^{-1}
\end{equation}
for all $j\geq 0$.

\subsection{Bessel functions and integral representations} The integral transforms in the Kuznetsov formula can alternatively be expressed using Bessel functions, similarly to the $\GL_2$ setting. Such integral representations and results about the relevant Bessel functions are compiled in 
\cite{BB20}*{Section 4 and 5}. In this subsection, we recall some of these that are later needed to prove Lemma \ref{lemma:fine_bound_inttransfrom_voronoi} and Lemma \ref{lemma:fine_bound_inttransfrom_long}.

We define the following functions for $x>0,\alpha\in \IC$:
\[
    J^+_\alpha(x)=\frac{\pi}{2}\frac{J_{-\alpha}(2x)+J_{\alpha}(2x)}{\cos(2\pi\alpha/2)},\ J^-_\alpha(x)=\frac{\pi}{2}\frac{J_{-\alpha}(2x)-J_{\alpha}(2x)}{\sin(2\pi\alpha/2)}\ ,\tilde{K}_\alpha(x)=2\cos(\frac{\pi}{2}\alpha)K_a(2x)\ ,
\]
where $J_\alpha$ and $K_\alpha$ are the usual Bessel functions. By \cite{BB20}*{(4.12),(4.13),(4.17),(4.20)}, these functions satisfy:

\begin{equation}
    \label{eq:4.12}
    \tilde{K}_{it}(x)=\int_\IR \cos(2x\sinh v)\exp(itv)\mathrm{d}v,\quad J^\pm_{it}(x)=\int_\IR \left\{
\begin{array}{c}
\cos(x) \\
\sin(x)
\end{array}
\right\} (2x\cosh v)\exp(itv)\mathrm{d}v
\end{equation}
for $t\in \IR,x>0$. The integrals are not absolutely convergent, but integration by parts shows that the tail is very small, so the conditional convergence causes no extra difficulty. We have
\begin{equation}
    \label{eq:4.13}
    \frac{\partial^j}{\partial x^j}\tilde{K}_{it}(x),\frac{\partial^j}{\partial x^j} J^\pm_{it}(x)\textbf{}\ll_j (1+\frac{|t|}{x})^j
\end{equation}
for $|t|,x\geq 1$ and $j\geq 0$.

We also have
\begin{equation}
    \label{eq:4.17}
    \tilde{K}_{it}(x/2)=\Re(\exp(i\omega (x,t))f_M(x,t))+O(|t|^{-M}),\quad \omega(x,t)=|t|\mathrm{arccosh}\frac{|t|}{x}-\sqrt{t^2-x^2}\ ,
\end{equation}
for $t\in\IR,|t|>1,\frac{1}{10}|t|\geq x>0$ and fixed $M>0$ with
\[  
    x^i|t|^j\frac{\partial^j}{\partial x^i}\frac{\partial^j}{\partial t^j}f_M(x,t)\ll_{i,j,M}|t|^{-1/2}
\]
for any $i,j\geq 0$.

And similarly
\begin{equation}
\label{eq:4.20}
    J^\pm_{it}(x/2)=\Re(\exp(i\tilde{\omega}(x,t))\tilde{f}^\pm_M(x,t))+O(|t|^{-M}),\quad \tilde{\omega}(x,t)=|t|\mathrm{arcsinh}\frac{|t|}{x}-\sqrt{t^2+x^2}\ ,
\end{equation}
for $t\in\IR,|t|>1, x>0$ and fixed $M>0$ with
\[  
    x^i|t|^j\frac{\partial^j}{\partial x^i}\frac{\partial^j}{\partial t^j}\tilde{f}_M(x,t)\ll_{i,j,M}\frac{1}{x^{1/2}+|t|^{1/2}}
\]
for any $i,j\geq 0$.

Lastly, we quote an integral representation of $K_{w_4}$ and of $K_{w_6}$ in the $(++)$ case.

By \cite{BB20}*{Lemma 4}, we have
\begin{align}
    \label{eq:lemma4}
    K_{w_4}(y,\mu)=&\frac{1}{3072\pi^5}\int_0^\infty J^-_{\mu_1-\mu_2}(2\sqrt{u})\Big(\frac{\pi^3|y|}{u^{3/2}}\Big)^{-\mu_3}\exp(-\frac{2 i\pi^3 y}{u})\frac{\mathrm{d}u}{u}\\
    +&\frac{1}{3072\pi^5}\int_0^\infty \tilde{K}_{\mu_1-\mu_2}(2\sqrt{u})\Big(\frac{\pi^3|y|}{u^{3/2}}\Big)^{-\mu_3}\exp(-\frac{2 i\pi^3 y}{u})\frac{\mathrm{d}u}{u}\ .\nonumber
\end{align}
The integrals barely fail to be absolutely convergent at $0$, but since $\exp(\pm2 i\pi^3 y/u)$
is highly oscillating in a neighborhood of $u=0$, the integrals exist in a Riemann sense, and
the portion $0 < u < 1$ can be made absolutely convergent after partial integration.

By \cite{BB20}*{Lemma 5 and (5.7)} for $y_1,y_2>0$, we have
\begin{equation}
    \label{eq:5.8}
    K_{w_6}(y,\mu)=\frac{1}{12\pi^2}\frac{\cos(\frac{3}{2}\pi\nu_1)\cos(\frac{3}{2}\pi\nu_2)}{\cos(\frac{3}{2}\pi\nu_3)}\mathcal{J}_5(y,\mu)
\end{equation}
with
\begin{equation}
    \label{eq:5.7}
    \mathcal{J}_5(y,\mu)=\left|\frac{y_1}{y_2}\right|^{\frac{1}{2}\mu_2}\int_0^\infty \tilde{K}_{3\nu_3}(2\pi |y_1|^{1/2}\sqrt{1+u^2})\tilde{K}_{3\nu_3}(2\pi |y_2|^{1/2}\sqrt{1+u^{-2}})u^{3\mu_2}\frac{\mathrm{d}u}{u}\ .
\end{equation}

\section{Approximate functional equation}
To access the central L-value, we are using an asymmetric approximate functional equation similar to \cite{BC25}*{Section 6.1}. There are two important features compared to a standard approximate functional equation as in \cite{IK04}*{Section 5.2}. The first is the addition of a polynomial $G$ that ensures the poles of the $L$-factor at infinity are canceled in a wide strip. This is important for later being able to shift the $\mu$-contours. The second is the asymmetry between the two parts. This is useful because the second part contains an oscillating factor coming from the $L$-factor at infinity that makes later arguments harder. The slight imbalance between the two summands enables us to treat the second summand more bluntly and avoid dealing with the added oscillation in some cases.

Define
\[
    \mathcal{G}(u,\mu):=(1-4u^2)\prod_{\substack{n\equiv 0 \mod 2\\|n|\leq A_0}}\prod_{1\leq i\leq j\leq 3}(\frac{1}{2}- u+ \mu_i+\mu_j-n)\ .
\]

\begin{thm}
\label{thm:approxfunctionalequation}
For $\pi\in\CB$ we have
\[
    L(1/2,\pi,\mathrm{sym}^2)=\zeta(\frac{3}{2})\cdot \sum_{b,c}\Big( \frac{A_\pi(b^2,c^2)}{(b^2c)^{1/2}}V_\pi(\frac{b^2c}{T^{3+\alpha}})-\frac{A_{\Tilde\pi}(b^2,c^2)}{(b^2c)^{1/2}}W_\pi(b^2cT^{3+\alpha})\Big)\ ,
\]
with
\[
    V_\pi(x):=V_{\mu_\pi}(x):=\int_{(2)} x^{-u}\frac{e^{u^2}}{u}\frac{\zeta(3(\frac{1}{2}+u))}{\zeta(\frac{3}{2})}\frac{\mathcal{G}(u,\mu_\pi)}{\mathcal{G}(0,\mu_\pi)}\frac{\mathrm{d}u}{2\pi i}
\]
and
\[
    W_\pi(x):=W_{\mu_\pi}(x):=\int_{(2)} x^{-u}\frac{e^{u^2}}{u}\frac{\zeta(3(\frac{1}{2}+u))}{\zeta(\frac{3}{2})}\frac{\mathcal{G}(-u,\mu_\pi)}{\mathcal{G}(0,\mu_\pi)}\prod_{1\leq i\leq j\leq 3}\frac{\Gamma_{\IR}(\frac{1}{2}+u+\mu_i+\mu_j)}{\Gamma_{\IR}(\frac{1}{2}-u-\mu_i-\mu_j)}\frac{\mathrm{d}u}{2\pi i}\ .
\]
\end{thm}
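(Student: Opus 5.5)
The plan is the standard contour-shift argument, with the weight $\mathcal{G}(u,\mu_\pi)$ inserted and with \emph{different} scaling parameters on the two sides of the functional equation. Write $x:=T^{3+\alpha}$ and consider
\[
    I:=\int_{(2)} x^{u}\frac{e^{u^2}}{u}L(\tfrac12+u,\pi,\mathrm{sym}^2)\frac{\mathcal{G}(u,\mu_\pi)}{\mathcal{G}(0,\mu_\pi)}\frac{\mathrm{d}u}{2\pi i}\ .
\]
First I check that this is well defined. We have $\mathcal{G}(0,\mu_\pi)\neq 0$: by Lemma \ref{lemma:lfuncisentire}, $|\Re(\mu_i+\mu_j)|\le 2\theta_3<\tfrac12$, so $\Re(\tfrac12+\mu_i+\mu_j-n)\neq0$ for every even $n$.

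On $\Re u=2$, the Dirichlet series $L(s,\pi,\mathrm{sym}^2)=\zeta(3s)\sum_{b,c}A_\pi(b^2,c^2)(b^2c)^{-s}$ converges absolutely, using $A_\pi(m,n)\ll(mn)^{\theta_3+\varepsilon}$. Inserting it and integrating termwise gives $\zeta(\tfrac32)\sum_{b,c}A_\pi(b^2,c^2)(b^2c)^{-1/2}V_\pi(b^2c/x)$. Note that the factor $\zeta(3(\tfrac12+u))$ has been normalized by $\zeta(\tfrac32)$ inside $V_\pi$.

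Next I shift the contour to $\Re u=-2$. Since $L_\infty$ has no zeros, $L(s,\pi,\mathrm{sym}^2)=\Lambda/L_\infty$ inherits from Lemma \ref{lemma:lfuncisentire} that its only possible poles are at $s=0,1$, i.e.\ $u=\pm\tfrac12$. These are cancelled by the factor $(1-4u^2)$ in $\mathcal{G}$. Hence the only pole crossed is the simple pole at $u=0$, with residue $L(\tfrac12,\pi,\mathrm{sym}^2)$.

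To justify the shift I need polynomial growth of $L(\tfrac12+u,\pi,\mathrm{sym}^2)$ in the vertical strip $-2\le\Re u\le2$, which $e^{u^2}$ then dominates. The route is:
\begin{itemize}
\item On $\Re u=2$ the Dirichlet series is bounded.
\item On $\Re u=-2$, use the functional equation (\ref{eq:funcequation}) together with Stirling (\ref{eq:stirling}) for the gamma ratio.
\item In between, interpolate by Phragm\'en--Lindel\"of. This applies after multiplying by $(1-4u^2)$ (or $u^2-\tfrac14$) to remove the possible poles, using that $\Lambda$ is of finite order, which is standard for Rankin--Selberg quotients.
\end{itemize}
This growth estimate, together with the holomorphy supplied by Lemma \ref{lemma:lfuncisentire}, is the only non-formal input, and I expect it to be the main (if mild) obstacle.

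On $\Re u=-2$, I rewrite via (\ref{eq:funcequation}):
\[
L(\tfrac12+u,\pi,\mathrm{sym}^2)=L(\tfrac12-u,\tilde\pi,\mathrm{sym}^2)\prod_{i\le j}\frac{\Gamma_\IR(\tfrac12-u+\mu_i+\mu_j)}{\Gamma_\IR(\tfrac12+u-\mu_i-\mu_j)}\ .
\]
Here I use $L_\infty(s,\tilde\pi,\mathrm{sym}^2)=\prod\Gamma_\IR(s+\mu_i+\mu_j)$ and the fact that the arithmetic conductor and root number are $1$. Substituting $u\mapsto-u$ moves the contour to $\Re u=2$ and turns $\frac{\mathrm{d}u}{u}$ into $-\frac{\mathrm{d}u}{u}$, which produces the minus sign. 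The factor $x^{u}$ becomes $x^{-u}$, the weight becomes $\mathcal{G}(-u,\mu_\pi)$, and the gamma ratio becomes exactly $\prod\Gamma_\IR(\tfrac12+u+\mu_i+\mu_j)/\Gamma_\IR(\tfrac12-u-\mu_i-\mu_j)$.

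Expanding $L(\tfrac12+u,\tilde\pi,\mathrm{sym}^2)=\zeta(\tfrac32+3u)\sum A_{\tilde\pi}(b^2,c^2)(b^2c)^{-1/2-u}$, which again converges absolutely on $\Re u=2$, and interchanging sum and integral gives $-\zeta(\tfrac32)\sum_{b,c}A_{\tilde\pi}(b^2,c^2)(b^2c)^{-1/2}W_\pi(b^2c\,x)$. The combined weight is $(b^2c)^{-u}x^{-u}=(b^2cT^{3+\alpha})^{-u}$, which is why the same $T^{3+\alpha}$ appears in the denominator in $V_\pi$ but in the numerator in $W_\pi$; this is the asymmetry.

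Finally, $I=L(\tfrac12,\pi,\mathrm{sym}^2)+(\text{integral on }\Re u=-2)$ rearranges to the stated identity. The extra zeros of $\mathcal{G}(-u,\mu)$ at $\tfrac12+u+\mu_i+\mu_j=n$ are not needed for the identity itself. They cancel the poles of $\Gamma_\IR(\tfrac12+u+\mu_i+\mu_j)$ in a wide strip, so that the $W_\pi$ integral can later be shifted freely, and I would remark on this but not use it here.
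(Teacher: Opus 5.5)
Your route is the paper's: residue $L(\tfrac12,\pi,\mathrm{sym}^2)$ at $u=0$, the factor $1-4u^2$ cancelling the possible poles at $u=\pm\tfrac12$, $\mathcal{G}(0,\mu_\pi)\neq 0$ via $2\theta_3<\tfrac12$, the functional equation on the left line, $u\mapsto -u$, and the Dirichlet series on $\Re u=2$. You read off the archimedean parameters of $\tilde\pi$ directly as $-\mu_i$, where the paper appeals to unitarity. Your Phragm\'en--Lindel\"of justification of the contour shift is a detail the paper omits.

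The problem is the very last step. Let $J$ be the integral on $\Re u=-2$. You correctly have $I=L(\tfrac12,\pi,\mathrm{sym}^2)+J$, and after $u\mapsto-u$,
$J=-\zeta(\tfrac32)\sum_{b,c}A_{\tilde\pi}(b^2,c^2)(b^2c)^{-1/2}W_\pi(b^2cT^{3+\alpha})$.
Solving for the central value gives $L(\tfrac12,\pi,\mathrm{sym}^2)=I-J$. So the minus sign produced by $\frac{\mathrm{d}u}{u}\mapsto-\frac{\mathrm{d}u}{u}$ is cancelled when $J$ is moved across. Your computation therefore proves the identity with $+\frac{A_{\tilde\pi}(b^2,c^2)}{(b^2c)^{1/2}}W_\pi(b^2cT^{3+\alpha})$, not the stated one.

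This is not a flaw in your method: the minus sign in the statement is wrong. The paper's proof, which only says to change variables $u\mapsto -u$, does not track this sign either. A quick check: take $L\equiv 1$ with no gamma factors and no $\mathcal{G}$, and let $V(y)=\int_{(2)}y^{-u}e^{u^2}\frac{\mathrm{d}u}{2\pi i\,u}$. The same manipulation gives $V(y)+V(1/y)=1$, and at $y=1$ a minus sign would read $0=1$.

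So you should conclude with the corrected sign $+$ rather than assert the statement as written. Nothing later in the paper depends on this sign. Every $W$-contribution ($\Delta^{(2)}$, $\Sigma_4^{(2)}$, $\Sigma_5^{(2)}$, $\Sigma_6^{(2)}$ and the corresponding Eisenstein terms) is only bounded in absolute value and absorbed into the error term.
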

\begin{proof}
    We consider the integral:
\[
    \int_{(2)} L(u+1/2,\pi,\mathrm{sym}^2)\frac{e^{u^2}}{u} \mathcal{G}(u,\mu_\pi)T^{(3+\alpha)u}\frac{\mathrm{d}u}{(2\pi i)}\ .
\]
We first shift the contour to the left and pick up the residue
\[
    L(1/2,\pi,\mathrm{sym}^2)\mathcal{G}(0,\mu_\pi)
\]
at $u=0$. By Lemma \ref{lemma:lfuncisentire}, there are no other poles except for potential simple poles at $u=\pm \frac{1}{2}$, but these are canceled by the zeros of $G$. 

We then apply the functional equation \ref{eq:funcequation}
to the remaining integral and divide by $\mathcal{G}(0,\mu_\pi)$, which is nonzero due to Lemma \ref{lemma:lfuncisentire}, and change variables from $u$ to $-u$. We also use that by unitarity (\ref{eq:unitarity}), we have $\{\mu_1,\mu_2,\mu_3\}=\{-\overline{\mu_1},-\overline{\mu_2},-\overline{\mu_3}\}$. Lastly, we insert the Dirichlet series for $L$, as the argument has real part greater than one, and pull the sum and $\zeta(\frac{3}{2})$ out of the integral.
\end{proof}
Notice that $\mathcal{G}(-u,\mu_\pi)$ cancels all the poles of the Gamma factors in a wide strip. The factor $\frac{\zeta(3(\frac{1}{2}+u))}{\zeta(\frac{3}{2})}$ is holomorphic and bounded from above and below, with bounds depending only on the real part of $u$. It will, therefore, not affect the future computations and is sometimes implicitly bounded by a constant depending on the real part of $u$.
\begin{rem}
    \label{rem:selfdualforms}
    The (in)balance between the two terms containing $V$ and $W$ is defined in such a way that the length of the sum for the $W$ will later be slightly smaller than the critical range, which enables us to discard it in many cases. This critically uses 
    \[
      \prod_{1\leq i\leq j\leq 3}\frac{\Gamma_{\IR}(\frac{1}{2}+u+\mu_i+\mu_j)}{\Gamma_{\IR}(\frac{1}{2}-u-\mu_i-\mu_j)}\asymp T^{6\Re u} 
    \]
    for $\|\mu\|\asymp T$ in generic position and $\Im (u)\ll T^\varepsilon$. If we let $\pi$ be near a self-dual form, one of the spectral parameters will be close to $0$ and the analytic conductor, and thus also 
    \[
      \prod_{1\leq i\leq j\leq 3}\frac{\Gamma_{\IR}(\frac{1}{2}+u+\mu_i+\mu_j)}{\Gamma_{\IR}(\frac{1}{2}-u-\mu_i-\mu_j)} 
    \]
    will be smaller. This would require us to replace $T^3$ in the approximate functional equation by $\prod_{i=1}^3(|\mu_i|+1)$. Theorem \ref{thm:MainTheorem} should still hold with the same main and error term, but the methods later in the proof would need to distinguish a lot more cases, especially in Section \ref{section:LemmaWeightFunctions}, as the size of the $\mu_i$ is critically used there.
\end{rem}

The two functions $V,W$ are holomorphic in $\mu$ for $\Re(\mu)<A_0$ and satisfy the following bounds:

\begin{lemma}
\label{lemma:bound_VW}
    For $x\rightarrow\infty$ and $\mu$ sufficiently large and in generic position, we have
    \[
        V_\mu(x)\ll_A x^{-A}\text{ and } W_\mu(x)\ll_A x^{-A} \|\mu\|^{6A} \ ,
    \]
    for $A<A_0$.
\end{lemma}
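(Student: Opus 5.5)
We shift the $u$-contour in the integral defining $V_\mu(x)$, respectively $W_\mu(x)$, from $\Re u=2$ to $\Re u=A$ with $0<A<A_0$. We then bound the integrand on the new line.

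\emph{Analyticity.} The integrand $x^{-u}\frac{e^{u^2}}{u}\frac{\zeta(3(1/2+u))}{\zeta(3/2)}\frac{\mathcal{G}(\pm u,\mu)}{\mathcal{G}(0,\mu)}$ has no poles for $\Re u>0$. Indeed, $\zeta(3(1/2+u))$ is holomorphic there, its only pole being at $u=-1/6$. Thus for $V$ nothing is crossed on the way to $\Re u=A$. For $W$ the extra factor
\[
\prod_{i\le j}\Gamma_\IR(\tfrac12+u+\mu_i+\mu_j)/\Gamma_\IR(\tfrac12-u-\mu_i-\mu_j)
\]
is also harmless. The numerator has poles only where $\frac12+u+\mu_i+\mu_j\in -2\IZ_{\ge0}$, which forces $\Re u\le -\frac12+2\theta_3<0$. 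The zeros of the denominator factor only help. So the shift is legitimate, since the decay of $e^{u^2}$ in $|\Im u|$ kills the horizontal segments, and everything else grows at most polynomially in $|\Im u|$.

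\emph{Bounding on $\Re u=A$.} Here $|x^{-u}|=x^{-A}$, and $e^{u^2}$ decays like $e^{A^2-(\Im u)^2}$. The zeta ratio is bounded in terms of $A$.

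For the ratio $\mathcal{G}(\pm u,\mu)/\mathcal{G}(0,\mu)$, each linear factor has the form $(\frac12\mp u+\mu_i+\mu_j-n)/(\frac12+\mu_i+\mu_j-n)$. Since $\mu$ is in generic position and large, $|\mu_i+\mu_j|\asymp\|\mu\|$ for all $i\le j$: this is $|\mu_k|$ for $i\ne j$, and $|2\mu_i|$ for $i=j$. Hence each denominator factor has size $\asymp\|\mu\|$ and each ratio is $\ll 1+|u|/\|\mu\|\ll 1+|u|$. There are $O_{A_0}(1)$ such factors, so this contributes a polynomial in $|u|$, which is absorbed by $e^{-(\Im u)^2}$. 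The factor $1-4u^2$ is likewise polynomial. This already gives $V_\mu(x)\ll_A x^{-A}$.

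\emph{The main step for $W$.} The remaining task is the gamma ratio. Write $z_{ij}=\mu_i+\mu_j$, so that $|\Im z_{ij}|\asymp\|\mu\|$ and $|\Re z_{ij}|\le 2\theta_3$. Apply Stirling's formula (\ref{eq:stirling}) to
\[
\Gamma(\tfrac14+\tfrac{u+z}{2})/\Gamma(\tfrac14-\tfrac{u+z}{2}).
\]
The exponential factors $e^{-\frac{\pi}{2}|t|}$ match up to $e^{O(|\Im u|)}$, and the polynomial parts give
\[
|\tfrac{u+z}{2}|^{\Re(u+z)}\ll (\|\mu\|+|u|)^{A+O(1)}.
\]
To get the sharp exponent $6A$, note that the $\Re z_{ij}$ cancel in the total. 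By unitarity (\ref{eq:unitarity}) and trivial central character, $\sum_{i\le j}\Re z_{ij}=4\sum_i\Re\mu_i=0$. Combined with the factor $\pi^{-u}$ from each $\Gamma_\IR$, this yields, uniformly for $\Re u=A$,
\[
\prod_{i\le j}\frac{\Gamma_\IR(\frac12+u+z_{ij})}{\Gamma_\IR(\frac12-u-z_{ij})}\ll_A \|\mu\|^{6A}\,(1+|u|)^{O(A)}e^{O(|\Im u|)}.
\]
This is the quantitative form of the heuristic $\asymp T^{6\Re u}$ in Remark \ref{rem:selfdualforms}.

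For large $|\Im u|$, say $|\Im u|\gg\|\mu\|$, Stirling still gives at most polynomial growth in $|u|$ times $e^{O(|\Im u|)}$. This is again absorbed by $e^{-(\Im u)^2}$. We need to write the large-$|\Im u|$ regime carefully, because there $\Im(u+z)$ can pass near $0$ and Stirling in the form (\ref{eq:stirling}) does not apply. However, $\Re(\frac14\pm\frac{u+z}{2})$ stays bounded away from the poles: on the numerator side the real part is positive, and on the denominator side $1/\Gamma$ is entire. So compactness bounds handle that region.

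Integrating over $\Im u$ then gives $W_\mu(x)\ll_A x^{-A}\|\mu\|^{6A}$. The main obstacle is exactly this uniform Stirling estimate for the gamma ratio, particularly the cancellation of the $\Re\mu$ contributions and the transition region $|\Im u|\approx\|\mu\|$.
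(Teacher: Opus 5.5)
Your proposal is correct and follows the paper's proof. You shift the $u$-contour to $\Re u=A$, bound $\mathcal{G}(\pm u,\mu)/\mathcal{G}(0,\mu)$ polynomially in $u$ using $|\mu_i+\mu_j|\asymp\|\mu\|$ in generic position (the paper instead phrases this as the coefficients of $\mathcal{G}$ being $\ll\|\mu\|^{\deg}$ against $|\mathcal{G}(0,\mu)|\asymp\|\mu\|^{\deg}$), and use Stirling to bound the gamma ratio by $\|\mu\|^{6A}$ times a polynomial in $u$. Your extra care about the pole-free shift, the cancellation $\sum_{i\le j}\Re(\mu_i+\mu_j)=0$, and the transition region $|\Im u|\approx\|\mu\|$ fills in details the paper leaves implicit.
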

\begin{proof}
    Shift the contour to $A$. This needs $A_0>A$. We have
    \[
        \int_\IR e^{-x^2}|P(x)|\ll_{\mathrm{deg}(P)}\max(|a_i|)
    \]
    for any polynomial $P$ with coefficients $a_i$. Thus:
    \[
        V_{\mu}(x)\ll_A x^{-A}\int_{(A)}|e^{u^2}/u||\mathcal{G}(u,\mu)|\mathrm{d}u |\mathcal{G}(0,\mu)|^{-1}\ll_A x^{-A}
    \]
    because the coefficients are bounded by $\|\mu\|^{\mathrm{deg}(G)}$ and $|\mathcal{G}(0,\mu)| \asymp \|\mu\|^{\mathrm{deg}(G)}$ for $\mu$ large enough and in generic position.
    
     The proof for $W$ is the same with the added bound
    \[
        \Big|\prod_{1\leq i\leq j\leq 3}\frac{\Gamma_{\IR}(\frac{1}{2}+u+\mu_i+\mu_j)}{\Gamma_{\IR}(\frac{1}{2}-u-\mu_i-\mu_j)}\Big| \ll_u \|\mu\|^{6 \Re(u)}
    \]
    with polynomial dependency on $u$ that follows from Stirling's formula (\ref{eq:stirling}).
\end{proof}

To later perform integration by parts, we need the following lemma.

\begin{lemma}
\label{lemma:intpartsVW}
    For $\mu$ in generic position, $|u|\ll \|\mu\|^{\varepsilon}$ and any differential operator $D_k$ in $\mu$ of order $k$, we have
    \[
        D_k\frac{\mathcal{G}(u,\mu)}{\mathcal{G}(0,\mu)}\ll_u \|\mu\|^{-k}\left|\frac{\mathcal{G}(u,\mu)}{\mathcal{G}(0,\mu)}\right| 
    \]
    with polynomial dependency on $u$.
    
     For $\mu$ in generic position, $|u|\ll \|\mu\|^{\varepsilon}$ and any differential operator $D_k$ of order $k$ in $\mu$, we have
    \begin{align*}
        &D_k\Big(\frac{\mathcal{G}(-u,\mu)}{\mathcal{G}(0,\mu)}\prod_{1\leq i\leq j\leq 3}\frac{\Gamma_{\IR}(\frac{1}{2}+u+\mu_i+\mu_j)}{\Gamma_{\IR}(\frac{1}{2}-u-\mu_i-\mu_j)}\exp(-h_\infty(\mu))\Big)\\
        \ll_{u,\varepsilon}&  \|\mu\|^{-k}\Big|\frac{\mathcal{G}(-u,\mu)}{\mathcal{G}(0,\mu)}\prod_{1\leq i\leq j\leq 3}\frac{\Gamma_{\IR}(\frac{1}{2}+u+\mu_i+\mu_j)}{\Gamma_{\IR}(\frac{1}{2}-u-\mu_i-\mu_j)}\exp(-h_\infty(\mu))\Big|
    \end{align*}
    with
    \[
        h_\infty(\mu):= \sum_{1\leq i\leq j\leq 3} (\mu_i+\mu_j)\log(\frac{|\mu_i+\mu_j|}{2e})
    \]
    and polynomial dependency on $u$.
\end{lemma}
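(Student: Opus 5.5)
The plan is to work with logarithmic derivatives throughout. Both quantities are products of simple factors, and for $\mu$ in generic position each factor turns out to be a "symbol of order $0$ in $\mu$ at scale $\|\mu\|$".

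\textbf{First claim.} Write
\[
\frac{\mathcal{G}(u,\mu)}{\mathcal{G}(0,\mu)}=(1-4u^2)\prod_{n,\, i\le j}\Big(1-\frac{u}{\tfrac12+\mu_i+\mu_j-n}\Big).
\]
Here $n$ ranges over the bounded set of even integers with $|n|\le A_0$.

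In generic position we have $|\mu_i+\mu_j|=|\mu_k|\asymp\|\mu\|$ for $i\neq j$, with $k$ the remaining index, and $|2\mu_i|\asymp\|\mu\|$. Hence each denominator $L=\tfrac12+\mu_i+\mu_j-n$ is linear in $\mu$ with $|L|\asymp\|\mu\|$. Since $|u|\ll\|\mu\|^{\varepsilon}$, each factor $f=1-u/L$ satisfies $f=1+O(\|\mu\|^{\varepsilon-1})$, so $|f|\asymp1$. Moreover $\partial^{k} f=c_k\,u\,L^{-1-k}\cdot(\text{constant})$, which is $\ll_u\|\mu\|^{-k}$ with polynomial dependence on $u$.

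Applying the Leibniz rule to the finite product (of length bounded in terms of $A_0$) gives $D_k$ of the product $\ll\|\mu\|^{-k}\prod|f|$. This is the first claim. If one prefers not to divide by small factors, note that the $(1-4u^2)$ factor is independent of $\mu$, so nothing degenerates.

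\textbf{Second claim.} Split the expression into three parts: the same $\mathcal{G}(-u,\mu)/\mathcal{G}(0,\mu)$ factor, which is handled as above; a product over $i\le j$ of
\[
F_{ij}(\mu)=\frac{\Gamma_\IR(\tfrac12+u+z)}{\Gamma_\IR(\tfrac12-u-z)},\qquad z=\mu_i+\mu_j ;
\]
and the factor $\exp(-h_\infty(\mu))$. It suffices to show, for each pair $i\le j$, that
\[
H_{ij}=\log F_{ij}-z\log\frac{|z|}{2e}
\]
satisfies $D_k H_{ij}\ll\|\mu\|^{-k}$ for $k\ge1$. Indeed, $D_k e^{H}$ is then $e^{H}$ times a polynomial in derivatives of $H$ (Faà di Bruno), each term being $\ll\|\mu\|^{-k}$.

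By the chain rule it is enough to differentiate in $z$, since $z$ is a linear form in $\mu$ and $|z|\asymp\|\mu\|$ in generic position. We have
\[
\frac{d}{dz}\log F_{ij}=-\tfrac12\log\pi+\tfrac12\psi\big(\tfrac14+\tfrac{u+z}2\big)+\tfrac12\psi\big(\tfrac14-\tfrac{u+z}2\big),
\]
while
\[
\frac{d}{dz}\Big(z\log\frac{|z|}{2e}\Big)=\log\frac{|z|}{2}
\]
along the relevant direction.

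Here is the main point to check. On $\Re\mu=0$ the quantity $z$ is essentially purely imaginary, and we are working in a complex neighbourhood. By (\ref{eq:stirling_digamma}) and (\ref{eq:bound_derivative_digamma}), applicable because the arguments $\tfrac14\pm\tfrac{u+z}{2}$ have argument bounded away from $\pi$ when $|\Im z|\asymp\|\mu\|$ and $\Re z,u$ are small relative to it, we get $\psi(w)=\log w+O(|w|^{-1})$, with the derivative bounds improving by $|w|^{-1}$ at each step. The sum $\tfrac12\log\big(\tfrac{z}{2}\big)+\tfrac12\log\big(-\tfrac{z}{2}\big)$ equals $\log\frac{|z|}{2}$ up to the branch term $\pm\tfrac{i\pi}{2}$. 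The $-\tfrac12\log\pi$ term and the branch constant are absorbed; they could alternatively be included in $h_\infty$, and otherwise give a harmless linear phase. This holds with the sign conventions of the paper's normalization of $h_\infty$. One has to be careful that these constants really cancel; if they do not, one gets a term linear in $z$ with a bounded coefficient, whose derivatives are not small. So the key step is to verify that the imaginary constants $\pm i\pi/2$ cancel between the two digamma terms, which they do since the arguments are $\pm$ each other up to $O(u)$. A $\log\pi$ term would instead be a genuine linear phase $z\log\pi$, which I expect is harmless only if included in the normalization.

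Granting this, $H_{ij}'(z)\ll|z|^{-1}+|u|\,|z|^{-1}$, using a Taylor expansion of $\log$ to account for the $u$ shift. Higher derivatives gain $|z|^{-1}$ each by (\ref{eq:bound_derivative_digamma}), so $D_kH\ll_u\|\mu\|^{-k}$ with polynomial dependence on $u$. Combining the three parts via Leibniz then gives the second claim. I expect the main obstacle to be the precise bookkeeping of the Stirling phase, making sure that $h_\infty$ exactly removes the non-decaying part of $\partial_z\log F_{ij}$.
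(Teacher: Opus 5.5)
Your overall strategy matches the paper's: take logarithmic derivatives, use (\ref{eq:stirling_digamma}) and (\ref{eq:bound_derivative_digamma}) so that $\tfrac12\psi(a)+\tfrac12\psi(b)$, with $a=\tfrac14+\tfrac{u+z}{2}$ and $b=\tfrac14-\tfrac{u+z}{2}$, becomes $\log\frac{|z|}{2}+O((1+|u|)|z|^{-1})$, let $h_\infty$ cancel the non-decaying part, and then bootstrap to higher derivatives. Your treatment of the $\mathcal{G}$-ratio is fine, and more explicit than the paper's. Your worry about $\pm i\pi/2$ also resolves correctly: $\log a+\log b=\log(ab)$ with $ab=\tfrac1{16}-\tfrac{(u+z)^2}{4}$ close to the positive real axis. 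This is exactly the paper's quantity $\tfrac12\log\frac{(\frac12+u+z)(\frac12-u-z)}{|z|^2}=O(|u|\,\|\mu\|^{-1})$.

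The gap is the reduction ``it suffices to show, for each pair $i\le j$, that $D_kH_{ij}\ll\|\mu\|^{-k}$''. As stated this is false.

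\begin{itemize}
\item Since $\Gamma_\IR(s)=\pi^{-s/2}\Gamma(s/2)$, each $F_{ij}$ contains the factor $\pi^{-u-z_{ij}}$. The coefficient is $-\log\pi$, not $-\tfrac12\log\pi$, because numerator and denominator contribute equally.
\item Hence $D_1H_{ij}$ contains the term $-\log\pi\cdot D_1 z_{ij}$. This is of size $1$, not $\|\mu\|^{-1}$, and $h_\infty$ does not remove it.
\item You noticed this but left it open, suggesting it is harmless only if the normalization of $h_\infty$ is changed. However, $h_\infty$ is fixed by the statement, so as written your argument does not prove the lemma.
\end{itemize}

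The missing observation is that this phase cancels only globally. On $\Lambda_\infty$ one has $\sum_{i\le j}(\mu_i+\mu_j)=4(\mu_1+\mu_2+\mu_3)=0$, so $\prod_{i\le j}\pi^{-(u+\mu_i+\mu_j)}=\pi^{-6u}$ does not depend on $\mu$. This is the first step of the paper's proof: it rewrites the product as pure Gamma ratios times $\pi^{-6u}$. Once $\pi^{-6u}$ is pulled out, your per-pair argument applies verbatim to $\log\Gamma(a)-\log\Gamma(b)-z\log\frac{|z|}{2e}$, and the Fa\`a di Bruno step completes the proof.
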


The function $h_\infty(\mu)$ basically captures the phase of the Gamma factors up to lower order terms, ignoring the imaginary part of $u$, which is not an issue because $|\Im(u)|\ll\|\mu\|^\varepsilon$.

As it is often useful to parametrize the $\mu$-plane by $\mu_i,\nu_{i+1}$, we also have this alternative way to write $h_\infty$:
\begin{equation}
    \label{eq:alternative_h_inf}
    h_\infty(\mu)= \frac{1}{2}\big(\mu_i\cdot \log\big(\frac{|\mu_i|^2}{|3\nu_{i+1}+\mu_i||3\nu_{i+1}-\mu_i|}\big) +3\nu_{i+1}\log\big(\frac{|3\nu_{i+1}-\mu_i|}{|3\nu_{i+1}+\mu_i|}\big)\big) 
\end{equation}
for $1\leq i\leq 3$ and $\nu_4:=\nu_1$.
We can already see that the quotients in the logarithms are basically constant if $\mu$ is close to $\tilde{\mu}$, which will be used later. 

\begin{proof}
The function $\frac{\mathcal{G}(u,\mu)}{\mathcal{G}(0,\mu)}$ is rational in the $\mu_i$ and $u$. Thus, using that for each $i$, we have $|\mu_i|\asymp \|\mu\|$ leads to the first statement.

For the second statement, the same is true for the $\frac{\mathcal{G}(-u,\mu)}{\mathcal{G}(0,\mu)}$ part. Using $\mu_1+\mu_2+\mu_3=0$, we also notice
\[
    \prod_{1\leq i\leq j\leq 3}\frac{\Gamma_{\IR}(\frac{1}{2}+u+\mu_i+\mu_j)}{\Gamma_{\IR}(\frac{1}{2}-u-\mu_i-\mu_j)}=\prod_{1\leq i\leq j\leq 3}\frac{\Gamma(\frac{\frac{1}{2}+u+\mu_i+\mu_j}{2})}{\Gamma(\frac{\frac{1}{2}-u-\mu_i-\mu_j}{2})}\pi^{-6u} \ .
\]
Using this, we have for any differential operator $D$ in the $\mu_i$ of order $1$
\begin{align*}
    &D\Big(\prod_{1\leq i\leq j\leq 3}\frac{\Gamma_{\IR}(\frac{1}{2}+u+\mu_i+\mu_j)}{\Gamma_{\IR}(\frac{1}{2}-u-\mu_i-\mu_j)}\exp(-h_\infty(\mu))\Big)\\
    =&\Big(-D(h_\infty(\mu))+\sum_{1\leq i\leq j\leq 3}D(\frac{\frac{1}{2}+u+\mu_i+\mu_j}{2})\cdot \psi(\frac{\frac{1}{2}+u+\mu_i+\mu_j}{2})\\
    &-D(\frac{\frac{1}{2}-u-\mu_i-\mu_j}{2})\cdot \psi(\frac{\frac{1}{2}-u-\mu_i-\mu_j}{2})
    \Big)\cdot \prod_{1\leq i\leq j\leq 3}\frac{\Gamma_{\IR}(\frac{1}{2}+u+\mu_i+\mu_j)}{\Gamma_{\IR}(\frac{1}{2}-u-\mu_i-\mu_j)}\exp(-h_\infty(\mu)) \ .
\end{align*}
Because $|u|\ll \|\mu\|^\varepsilon$, we can use (\ref{eq:bound_derivative_digamma}) with $\varepsilon^\prime=\pi/4$. This gives
\begin{align*}
    &-D(h_\infty(\mu))+\sum_{1\leq i\leq j\leq 3}D(\frac{\frac{1}{2}+u+\mu_i+\mu_j}{2})\cdot \psi(\frac{\frac{1}{2}+u+\mu_i+\mu_j}{2})\\
    &\quad\quad-D(\frac{\frac{1}{2}-u-\mu_i-\mu_j}{2})\cdot \psi(\frac{\frac{1}{2}-u-\mu_i-\mu_j}{2})\\
    &=\sum_{1\leq i\leq j\leq 3}D(\frac{\mu_i+\mu_j}{2})\cdot \log(\frac{1}{4}(\frac{1}{2}+u+\mu_i+\mu_j)(\frac{1}{2}-u-\mu_i-\mu_j))\\&\quad\quad-D(\mu_i+\mu_j)\cdot \log(\frac{1}{2}|\mu_i+\mu_j|)+\tilde{g}_{\varepsilon^\prime}(u,\mu)\\
    &=\sum_{1\leq i\leq j\leq 3}D(\frac{\mu_i+\mu_j}{2})\log\big(\frac{(\frac{1}{2}+u+\mu_i+\mu_j)(\frac{1}{2}-u-\mu_i-\mu_j)}{|\mu_i+\mu_j|^2}\big)+\tilde{g}_{\varepsilon^\prime}(u,\mu),
\end{align*}
where $\tilde{g}_{\varepsilon^\prime}(u,\mu)$ is polynomially bounded in $u$ and satisfies
\[
\frac{\partial^j}{\partial \mu^j}\tilde{g}_{\varepsilon}(u,\mu)\ll_j \|\mu\|^{-j-1}  
\]
for $j\geq 0$.

Looking at the quotient in the logarithm, we can see that it is equal to $1+O(|u|\cdot \|\mu\|^{-1})$. Thus, the logarithm is bounded by $|u|\cdot \|\mu\|^{-1}$, which proves the claim for a differential operator of order $1$. For higher order operators, we can inductively repeat the argument, using that the additional factor was not only bounded by $\|\mu\|^{-1}$, but also its $j$-th derivative is bounded by $\|\mu\|^{-j-1}$.
\end{proof}

\begin{lemma}
\label{lemma:derivative_hinf}
The following statements hold for the derivatives of $h_\infty$ for $\mu\in\Lambda_0$ with $\|\mu-\tilde{\mu}\|\leq T^{\delta}$:
\begin{align*}
    \frac{\partial}{\partial\Im(\mu_i)}h_\infty(\mu)=&\frac{i}{2}\log\big|\frac{1}{4}\frac{\tilde{\mu}_i^2}{\tilde{\mu}_{i+1}\tilde{\mu}_{i+2}}\big|+O(T^{-1+\delta})\\
    \frac{\partial}{\partial\Im(\nu_{i+1})}h_\infty(\mu)=&\frac{3i}{2}\log\big|\frac{\tilde{\mu}_{i+1}}{\tilde{\mu}_{i+2}}\big|+O(T^{-1+\delta})\\
    \frac{\partial^j}{\partial\Im(\mu_{i})^j}h_\infty(\mu),\frac{\partial^j}{\partial\Im(\nu_{i+1})^j}h_\infty(\mu)\ll& T^{1-j}
\end{align*}
for all $j\geq 2$.
\end{lemma}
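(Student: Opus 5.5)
This is a direct computation from the definition of $h_\infty$, restricted to $\Re\mu=0$ with $\|\mu-\tilde{\mu}\|\leq T^\delta$. On this set $\mu_k\in i\IR$ and $|\mu_k|\asymp T$ for each $k$ by generic position (\ref{eq:genericposition}). Also $\mu_i+\mu_j=-\mu_k$ for $\{i,j,k\}=\{1,2,3\}$, so all six linear forms $\mu_i+\mu_j$ have modulus $\asymp T$. Hence $h_\infty$ is real-analytic there, and each summand $f(x)=x\log(|x|/(2e))$ is differentiated along real directions in the imaginary coordinates.

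The basic computation is the following. For $x=\mu_i+\mu_j$ purely imaginary, $\partial x/\partial\Im(\mu_\ell)$ is a constant $ic$ with $c\in\{0,\pm1,\pm2\}$, and
\[
\frac{\partial}{\partial \Im\mu_\ell} f(x)=ic\Big(\log\frac{|x|}{2e}+1\Big)=ic\log\frac{|x|}{2}.
\]
For higher derivatives, every further derivative hits only $\log|x|$. So the $j$-th derivative ($j\ge2$) is a constant times $|x|^{-(j-1)}$, which is $\ll T^{1-j}$. Indeed it is even smaller, $T^{1-j}$ being the crude bound. This gives the third claim, whether we parametrize by $(\mu_i,\mu_{i+1})$ or by $(\mu_i,\nu_{i+1})$ via (\ref{eq:alternative_h_inf}). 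Here one must fix what the partial derivative means: I take coordinates $(\Im\mu_i,\Im\nu_{i+1})$ on the hyperplane. This is consistent with (\ref{eq:alternative_h_inf}), since $\mu_{i+1}=(-\mu_i+3\nu_{i+1})/2$ and $\mu_{i+2}=(-\mu_i-3\nu_{i+1})/2$.

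For the first derivatives I would use formula (\ref{eq:alternative_h_inf}) directly. Write $a=\mu_i$ and $b=3\nu_{i+1}$, so that $|b+a|=2|\mu_{i+2}|$ and $|b-a|=2|\mu_{i+1}|$. Then
\[
h_\infty=\tfrac12\Big(a\log\frac{|a|^2}{|b+a||b-a|}+b\log\frac{|b-a|}{|b+a|}\Big).
\]
The logarithmic derivative terms cancel by homogeneity. The function is homogeneous of degree one, and each logarithm is of degree zero, so Euler's relation kills the derivative terms; concretely, $a\,\partial_a\log(\cdot)+b\,\partial_b\log(\cdot)$ summed gives $0$. Therefore
\[
\partial_{\Im a}h_\infty=\tfrac i2\log\frac{|a|^2}{|b+a||b-a|}=\tfrac i2\log\Big|\frac{\mu_i^2}{4\mu_{i+1}\mu_{i+2}}\Big|
\]
and
\[
\partial_{\Im b}h_\infty=\tfrac i2\log\frac{|\mu_{i+1}|}{|\mu_{i+2}|}.
\]
Converting from $b$ to $\nu_{i+1}$ introduces the factor $3$, giving $\tfrac{3i}2\log|\mu_{i+1}/\mu_{i+2}|$. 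I would double-check this cancellation explicitly, since it is the only nontrivial algebra in the argument. Both formulas have exactly the claimed shape, except that they are evaluated at $\mu$ rather than at $\tilde\mu$.

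Finally, I replace $\mu$ by $\tilde\mu$. The functions $\log|\mu_k|$ have gradient $\ll T^{-1}$ on the segment from $\tilde\mu$ to $\mu$, since $|\mu_k|\asymp T$ stays valid there as $T^\delta\ll T$. The mean value theorem then gives an error $\ll T^\delta\cdot T^{-1}=T^{-1+\delta}$. The only point needing care is that the segment stays in the region where generic position holds up to constants, which follows from $\delta<1$. I expect no genuine obstacle here; the main thing to get right is the Euler-relation cancellation and the normalization of the coordinate derivatives.
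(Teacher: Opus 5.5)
Your route is the paper's own: differentiate (\ref{eq:alternative_h_inf}) in the coordinates $(\Im\mu_i,\Im\nu_{i+1})$, let the non-logarithmic terms cancel, pass from $\mu$ to $\tilde{\mu}$ at cost $O(T^{-1+\delta})$ because $|\mu_k|\asymp T$ on the whole ball, and read off the higher derivatives from rational functions of the right degree. One step is justified incorrectly, though. The cancellation is true, but homogeneity is not the reason for it. Write $L_1,L_2$ for the two logarithms. Differentiating $\frac12(aL_1+bL_2)$ in $a$ leaves the extra terms $\frac12(a\,\partial_aL_1+b\,\partial_aL_2)$. Euler's relation only gives $a\,\partial_aL_1+b\,\partial_bL_1=0$. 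You additionally need $\partial_bL_1=\partial_aL_2$ (both equal $-\frac{1}{b+a}-\frac{1}{b-a}$), and this does not follow from degree-zero homogeneity; the paper simply checks the cancellation by hand. The transparent reason is that, since $\mu_1+\mu_2+\mu_3=0$, the constants in the definition drop out and
\[
h_\infty(\mu)=\sum_{k=1}^3\mu_k\log|\mu_k|\ .
\]
So any first derivative is $\sum_k(\partial\mu_k)(\log|\mu_k|+1)$, and the $+1$'s sum to $\partial(\mu_1+\mu_2+\mu_3)=0$.

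More importantly, this identity exposes a slip you inherit from the paper: (\ref{eq:alternative_h_inf}) is not an identity for the $h_\infty$ defined in Lemma \ref{lemma:intpartsVW}. Using $3\nu_{i+1}+\mu_i=-2\mu_{i+2}$ and $3\nu_{i+1}-\mu_i=2\mu_{i+1}$, its right-hand side equals $h_\infty(\mu)-\mu_i\log 2$. Hence the true first derivative in $\Im\mu_i$ is $\frac{i}{2}\log|\mu_i^2/(\mu_{i+1}\mu_{i+2})|$, without the factor $\frac14$. For example, for $(\Im\mu_1,\Im\mu_2,\Im\mu_3)=(t,2t,-3t)$ and index $1$ one gets $\frac{i}{2}\log\frac16$, not $\frac{i}{2}\log\frac1{24}$. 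Your own summand-wise formula $ic\log(|x|/2)$ gives exactly this value if you sum it over the six linear forms with the correct chain-rule coefficients. In these coordinates those coefficients are $c\in\{2,-1,-1,\frac12,\frac12,-1\}$, not only integers. The discrepancy is linear in $\mu_i$, so the $\nu_{i+1}$-formula and the bounds for $j\geq 2$ are unaffected; only the constant in the first formula changes. Still, you should not certify that constant by quoting (\ref{eq:alternative_h_inf}) without checking it against the definition.
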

\begin{proof}
Recall (\ref{eq:alternative_h_inf})
\[
    h_\infty(\mu)= \frac{1}{2}\big(\mu_i\cdot \log\big(\frac{|\mu_i|^2}{|3\nu_{i+1}+\mu_i||3\nu_{i+1}-\mu_i|}\big) +3\nu_{i+1}\log\big(\frac{|3\nu_{i+1}-\mu_i|}{|3\nu_{i+1}+\mu_i|}\big)\big) \ .
\]
We calculate
\begin{align*}
    \frac{\partial}{\partial\Im(\mu_i)}h_\infty(\mu)=&\ \frac{i}{2}\Big(\log\big(\frac{|\mu_i|^2}{|3\nu_{i+1}+\mu_i||3\nu_{i+1}-\mu_i|}\big)+2-\frac{\Im(\mu_i)}{\Im(3\nu_{i+1}+\mu_i)}+\frac{\Im(\mu_i)}{\Im(3\nu_{i+1}-\mu_i)}\\
    &\ + \frac{\Im(3\nu_{i+1})}{\Im(-3\nu_{i+1}+\mu_i)}-\frac{\Im(3\nu_{i+1)}}{\Im(3\nu_{i+1}+\mu_i)}
    \Big)\\
    =&\ \frac{i}{2}\log\big(\frac{|\mu_i|^2}{|3\nu_{i+1}+\mu_i||3\nu_{i+1}-\mu_i|}\big)\\
    =&\ \frac{i}{2}\log\big|\frac{1}{4}\frac{\mu_i^2}{\mu_{i+1}\mu_{i+2}}\big|= \frac{i}{2}\log\big|\frac{1}{4}\frac{\tilde{\mu}_i^2}{\tilde{\mu}_{i+1}\tilde{\mu}_{i+2}}\big|+O(T^{-1+\delta})\ .
\end{align*}
In the last step, we can use that $\mu$ is close to $\tilde{\mu}$, and we can pull the error term outside the logarithm because
\[
0<\frac{c_1^2}{c_2^2}\leq \big|\frac{1}{4}\frac{\tilde{\mu}_i^2}{\tilde{\mu}_{i+1}\tilde{\mu}_{i+2}}\big|\leq \frac{c_2^2}{c_1^2}\ .
\]
We calculate further
\[
    \frac{\partial^2}{\partial\Im(\mu_i)^2}h_\infty(\mu)=\ \frac{i}{2}\big(\frac{2}{\Im(\mu_i)}-\frac{1}{\Im(3\nu_{i+1}+\mu_i)}+\frac{1}{\Im(-3\nu_{i+1}+\mu_i)}\big)\ .
\]
This is in $O(T^{-1})$ and rational in $\mu_i$, which implies the above claim for all $j\geq2$.

For $\nu_{i+1}$, we calculate in the same way
\begin{align*}
    \frac{\partial}{\partial\Im(\nu_{i+1})}h_\infty(\mu)=&\ \frac{3i}{2}\log\big(\frac{|3\nu_{i+1}-\mu_i|}{|3\nu_{i+1}+\mu_i|}\big)\\
    =&\ \frac{3i}{2}\log\big|\frac{\mu_{i+1}}{\mu_{i+2}}\big|\\
    =&\ \frac{3i}{2}\log\big|\frac{\tilde{\mu}_{i+1}}{\tilde{\mu}_{i+2}}\big|+O(T^{-1+\delta})\ ,
\end{align*}
where we used the same argument in the last step, but with
\[
    \frac{c_1}{c_2}\leq \frac{|\tilde{\mu}_{i+1}|}{|\tilde{\mu}_{i+2}|}\leq \frac{c_2}{c_1}\ .
\]  
Lastly, we have
\begin{align*}
    \frac{\partial^2}{\partial\Im(\nu_{i+1})^2}h_\infty(\mu)=\ \frac{9i}{2}\Big(\frac{1}{\Im(3\nu_{i+1}-\mu_i)}-\frac{1}{\Im(3\nu_{i+1}+\mu_i)}\Big)\ .
\end{align*}
This is in $O(T^{-1})$ and rational in $\nu_{i+1}$, which implies the above claim for all $j\geq2$.
\end{proof}

\section{Key Lemmata on the weight functions}
\label{section:LemmaWeightFunctions}
In this section, we compile bounds on the integral transforms $\Phi_w$ and some Fourier transforms of them. The results serve two main purposes. Lemma \ref{lemma:fine_bound_inttransfrom_voronoi} and Lemma \ref{lemma:fine_bound_inttransfrom_long} provide finer bounds for $\Phi_w$ for different sizes of the arguments. Most importantly, they quantify that $\Phi_w(y)$ is negligible for $y$ small. Lemma \ref{lemma:bound_fourier_voronoi} and Lemma \ref{lemma:bound_fourier_long} give strong bounds on Fourier transforms of $\Phi_w$. These show up after applying Poisson summation in the proof of Theorem \ref{thm:MainTheorem} and are the main technical input of this paper.

All four lemmata  have a similar structure and purpose as the results in \cite{BB20}*{Section 7}, and their proofs follow similar ideas but differ in two key aspects. The first is that our test functions include terms coming from the approximate functional equation, Theorem \ref{thm:approxfunctionalequation}, in particular, the additional quotient of Gamma-factors contained in $W_\mu(x)$. The second is that we will later perform Poisson summation in only one variable and thus will encounter different Fourier transforms compared to \cite{BB20}.

In all the upcoming lemmata, we will distinguish between two kinds of test functions. One will later correspond to the $V$-part of the approximate functional equation, and one will correspond to the $W$-part. The latter has added oscillation captured by $h_\infty$; see Lemma \ref{lemma:intpartsVW} and (\ref{eq:alternative_h_inf}). As the main tools to bound the integral transforms are stationary phase or integration by parts, added oscillation complicates the arguments. However, as seen below, we can often overcome this because $h_\infty$ is essentially linear in the $\mu_i$ and $\nu_i$.

\begin{lemma}
\label{lemma:fine_bound_inttransfrom_voronoi}
Let $h\in \CH(A_0,\tilde{\mu})$.
\begin{enumerate}[(a)]
    \item For $0<|y|\leq T^{3-\varepsilon}$ and any $B\geq 0$, we have
    \[
        \Phi_{w_4}^h(y)\ll_{\varepsilon,B} T^{-B}\ .
    \]
    \item If $T^{3-\varepsilon}<|y|$, then
    \[
        |y|^j\frac{\partial^j}{\partial y^j} \Phi_{w_4}^h(y)\ll_{\varepsilon,j} T^{3+2\delta+\varepsilon}(T+|y|^{1/3})^j
    \]
    for any $j\in\IN_0$.
\end{enumerate}
Let $g\in \CH(A_0)$ such that $g$ also satisfies (\ref{eq:rapid_decay_awayfrommu}) and (\ref{eq:BoundSpec_h}), and there is $\epsilon\in\{\pm\}$ such that $g(\mu)\exp(\epsilon h_\infty(\mu))$ satisfies (\ref{eq:bound_derivatives_h}).

\begin{enumerate}[(a)]
\setcounter{enumi}{2}
    \item
    For $0<|y|\leq T^{3-\varepsilon}$ and any $B\geq 0$, we have
    \[
        \Phi_{w_4}^g(y)\ll_{\varepsilon,B} T^{-B}\ .
    \]
    \item If $T^{3-\varepsilon}<|y|$, then
    \[
        |y|^j\frac{\partial^j}{\partial y^j} \Phi_{w_4}^g(y)\ll_{\varepsilon,j} T^{3+2\delta+\varepsilon}(T+|y|^{1/3})^j
    \]
    for any $j\in\IN_0$.
\end{enumerate}
The same holds for $\Phi_{w_5}$.
\end{lemma}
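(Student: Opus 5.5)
The plan follows \cite{BB20}*{Lemma 7}, i.e.\ their treatment of $\Phi_{w_4}$. We start from the Bessel integral representation (\ref{eq:lemma4}) of $K_{w_4}$ and insert it into the definition of $\Phi_{w_4}^h$. After a change of variables $u\mapsto u^2$ (or $\sqrt{u}=x$), the $\mu$-integral becomes an integral over $\mu$ of
\[
h(\mu)\spec(\mu)\,J^-_{\mu_1-\mu_2}(2x)\Big(\frac{\pi^3|y|}{x^3}\Big)^{-\mu_3}e\Big(-\frac{\pi^2 y}{x^2}\Big),
\]
and the same expression with $\tilde K$ in place of $J^-$. We parametrize the $\mu$-plane by $(\mu_3,3\nu_1)$, so that $\mu_1-\mu_2=3\nu_1$ is the Bessel index and $\mu_3$ appears only in the power $(|y|/x^3)^{-\mu_3}$.

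\textbf{Step 1: localizing $x$.} By (\ref{eq:rapid_decay_awayfrommu}) we may restrict to $\|\mu-w\tilde\mu\|\le T^{\delta+\varepsilon}$, so that $|\Im\mu_3|\asymp|\Im 3\nu_1|\asymp T$. We then integrate by parts in $\Im\mu_3$. The phase is $-\Im\mu_3\log(\pi^3|y|/x^3)$, whose derivative is $\log(\pi^3|y|/x^3)$. The amplitude $h\cdot\spec$ satisfies the derivative bounds (\ref{eq:bound_derivatives_h}) with $U=T^\delta$; for $\spec$ this follows from its explicit form. Lemma \ref{lemma:intbyparts} therefore shows that the contribution is negligible unless $|\log(|y|/x^3)|\ll T^{-\delta+\varepsilon}$, i.e.\ unless $x\asymp |y|^{1/3}$ up to a factor $1+O(T^{-\delta+\varepsilon})$.

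For $g$ the same argument applies to $g\exp(\epsilon h_\infty)$. By Lemma \ref{lemma:derivative_hinf}, the derivative of $h_\infty$ in $\Im\mu_3$ is a constant $c_0=O(1)$ up to $O(T^{-1+\delta})$, with higher derivatives $\ll T^{1-j}$. The phase derivative is thus only shifted, and the stationary region becomes $\log(|y|/x^3)\approx c_0$, still giving $x\asymp|y|^{1/3}$.

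\textbf{Step 2: small $y$ (parts (a) and (c)).} With $x\asymp|y|^{1/3}\le T^{1-\varepsilon/3}$, the Bessel index $|3\nu_1|\asymp T$ is much larger than the argument $x$. For $\tilde K$ we use the representation (\ref{eq:4.12}) and integrate by parts in $v$ and in $\Im\nu_1$. Equivalently, we use (\ref{eq:4.17}): in the phase $\omega(x,t)$ with $t=3\Im\nu_1$, the $t$-derivative is $\mathrm{arccosh}(|t|/x)\gg \log T^{\varepsilon}$, which cannot be cancelled by the $O(1)$ derivative coming from $h_\infty$ in the $g$-case. Integration by parts in $\Im\nu_1$ (Lemma \ref{lemma:intbyparts} with $R\gg\varepsilon\log T$, $U=T^{\delta}$) then gives only a logarithmic saving per step. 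To get a genuine power saving, we instead use the exponential decay: $\tilde K_{it}(x)\cos$-normalized is $\ll e^{-c|t|}$-type relative to $J^-$, namely $J^-_{it}(x)$ for $x\le |t|^{1-\varepsilon}$ has phase $\tilde\omega$ with $\partial_t\tilde\omega=\mathrm{arcsinh}(|t|/x)$. I would first try integrating by parts in $v$ in (\ref{eq:4.12}), where the phase $2x\cosh v+tv$ has derivative $\gg |t|-x\sinh v$; combined with the $x$-localization this should give $T^{-B}$. This is the main obstacle: the derivative is only of size $\log T$, so I would expect to need the $v$-representation rather than just $\Im\nu_1$ integration by parts.

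\textbf{Step 3: large $y$ (parts (b) and (d)).} Here we bound trivially. We take $|J^\pm|,|\tilde K|\ll1$ by (\ref{eq:4.13}), note that the $x$-range after localization has measure $\ll1$ in $\log x$, and use (\ref{eq:BoundSpec_h}) to get $T^{3+2\delta+\varepsilon}$. For derivatives, $y\partial_y$ hits $|y|^{-\mu_3}$, giving a factor $\mu_3\ll T$, or hits $e(\pi^2y/x^2)$, giving $y/x^2\asymp|y|^{1/3}$. This yields $(T+|y|^{1/3})^j$. The contribution near $x\to0$ is handled by partial integration as remarked after (\ref{eq:lemma4}). Finally, $\Phi_{w_5}$ follows by the substitution $\mu\mapsto-\mu$, $y\mapsto -y$, using Weyl invariance.
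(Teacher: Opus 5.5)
Your Steps 1 and 3 are essentially the paper's argument. You localize $x=\sqrt{u}\asymp|y|^{1/3}$ by integrating by parts in $\Im\mu_3$, where the $h_\infty$-term only shifts the stationary point by a bounded factor via Lemma~\ref{lemma:derivative_hinf}. For $|y|>T^{3-\varepsilon}$ you then bound trivially with (\ref{eq:4.13}) and (\ref{eq:BoundSpec_h}), and read off the factors $\mu_3\ll T$ and $|y|/x^2\asymp|y|^{1/3}$ from differentiation. The reduction of $\Phi_{w_5}$ via $\mu\mapsto-\mu$ is also what the paper does. However, it rests on $\spec(-\mu)=\spec(\mu)$ and on applying the $w_4$-result at $-\tilde\mu$, not on Weyl invariance, since $\mu\mapsto-\mu$ is not in $\mathcal{W}$. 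Because $h_\infty$ is odd, this substitution also flips $\epsilon$, which is why both signs must be allowed in (c), (d).

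The gap is in Step 2, which carries parts (a) and (c). You set up exactly the right integration by parts: in $\Im\nu_1$, after inserting (\ref{eq:4.17}) (admissible since $4\sqrt{u}\ll T^{1-\varepsilon/3}\le|t|/10$) or (\ref{eq:4.20}). The phase derivative there is $\pm\mathrm{arccosh}(|t|/x)$ resp.\ $\pm\mathrm{arcsinh}(|t|/x)$, of size $\gg\varepsilon\log T$, plus a bounded shift from $h_\infty$. You then discard this because it supposedly gives ``only a logarithmic saving per step''. That misreads Lemma~\ref{lemma:intbyparts}: its bound is $(RU)^{-B}+(QR/\sqrt{Y})^{-B}$. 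Take $U=T^{\delta}$ (from (\ref{eq:bound_derivatives_h}) for $h$, resp.\ $g\exp(\epsilon h_\infty)$, together with the scale-$T$ derivative bounds for $\spec$, $f_M$, $\tilde f^\pm_M$), $Y=Q=T$ and $R\gg\varepsilon\log T$. Then the bound is $\ll T^{-\delta B}$, which is negligible. This is the same mechanism you used correctly in Step 1 when you accepted $R\geq T^{-\delta+\varepsilon}$, and it is precisely how the paper concludes.

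The substitute you propose does not work. In (\ref{eq:4.12}) the $v$-phase $tv\pm2x\sinh v$ (resp.\ $tv\pm2x\cosh v$) has a genuine stationary point $v_0$ with $\cosh v_0\asymp|t|/x$ (resp.\ $|\sinh v_0|\asymp|t|/x$), and it exists for every $x\ll T^{1-\varepsilon/3}$. So integrating by parts in $v$ alone cannot give $T^{-B}$. The saving comes only from the $t$-integral, whose amplitude is smooth at scale $T^{\delta}$ and therefore annihilates the frequency $|v_0|\asymp\log(T/x)\gg1$. Also, $\tilde K_{it}(x)$ is not exponentially small in this range; by (\ref{eq:4.17}) it oscillates with amplitude about $|t|^{-1/2}$. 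As written, (a) and (c) are not established; reinstating the $\Im\nu_1$ integration by parts you dismissed closes the gap.
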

The assumptions in (c) and (d) might seem strange, but they will be exactly satisfied by the test functions coming from the $W$-part of the approximate functional equation.
\begin{proof}
In this proof we do not use $\varepsilon$-convention.

Assuming the statements hold for $w_4$. Because $\spec(\mu)=\spec(-\mu)$, we have
\[
    \Phi_{w_5}^h(y)=\Phi_{w_4}^{\tilde{h}}(-y),
\]
with $\tilde{h}(\mu):=h(-\mu)$. Also (\ref{eq:rapid_decay_awayfrommu}), (\ref{eq:BoundSpec_h}), and (\ref{eq:bound_derivatives_h}) are not affected and thus applying the results for $w_4$ and $-\tilde{\mu}$ gives the results for $w_5$.

The first two points are simply \cite{BB20}*{Lemma 8} and proven in \cite{BB20}*{12. Proof of Lemma 8}.

For part (c) and (d) we follow the same ideas but we have to account for the additional oscillation coming from $h_\infty$. We fix $\epsilon=-1$. The choice does not affect any of the below calculations.

As seen in Lemma \ref{lemma:derivative_hinf} the oscillation in $\mu_i$ or $\nu_i$ of $h_\infty$ is almost linear for $\mu$ close to $\tilde{\mu}$. When applying Lemma \ref{lemma:intbyparts} below the added oscillation will shift the critical range by a constant $C$ that only depends on the ratios of the $\tilde{\mu}_i$ and $\tilde{\nu}_i$ and is bounded from below and above in terms of $c_1$ and $c_2$. The treatment of this critical range afterwards remains the same as in \cite{BB20}*{12. Proof of Lemma 8}.

We start by inserting the integral representation (\ref{eq:lemma4})
into the definition of $\Phi_{w_4}$. We can replace $g(\mu)\exp(-h_\infty(\mu))$ at the cost of a negligible error by a real analytic function $h$ that also satisfies (\ref{eq:rapid_decay_awayfrommu}), (\ref{eq:bound_derivatives_h}), and (\ref{eq:BoundSpec_h}) and has support in $\min_{w\in \mathcal{W}}\|\mu-w(\tilde{\mu})\|\leq T^{\delta+\varepsilon}$. We further only treat $h$ restricted to the ball around $\tilde{\mu}$ below. The treatment of the $w(\tilde{\mu})$ is analogous with $\tilde{\mu}$ replaced by $w(\tilde{\mu})$.

We parametrize the $\mu$-plane by $\mu_3$ and $\nu_1$. We start with the integral in (\ref{eq:lemma4}) involving the $\tilde{K}$-function:
\[
    \int_{\Re\mu=0}h(\mu)\int_{0}^\infty \exp\Big(h_\infty(\mu)-\mu_3\log\frac{\pi^3|y|}{u^{3/2}}\Big)\tilde{K}_{3\nu_1}(2\sqrt{u})\exp\Big(\frac{2i \pi^3y}{u}\Big)\frac{\mathrm{d}u}{u}\spec(\mu)\mathrm{d}\mu\ .
\]
Recall that the $u$-integral is not absolutely convergent at $0$, but as remarked below (\ref{eq:lemma4})
this causes no difficulties.

We want to utilize oscillation in the $\mu_3$-integral. The oscillation is $\exp(iH(\mu_3))$ with
\[
    H(\mu_3)=\Im(\mu_3)\log\Big(\frac{u^{3/2}}{\pi^3|y|}\Big)+\Im(h_\infty(\mu))\ .
\]
We assumed that $\|\mu-\tilde{\mu}\|\leq T^{\delta+\varepsilon}$ and can thus apply Lemma \ref{lemma:derivative_hinf} to get that
\[
\frac{\partial}{\partial\Im(\mu_3)}H(\mu_3)=\log\Big(\frac{u^{3/2}}{\pi^3|y|}\cdot\big|\frac{1}{4}\frac{\tilde{\mu}_3^2}{\tilde{\mu}_{1}\tilde{\mu}_{2}}\big|^{1/2}\Big)+O(T^{-1+\delta+\varepsilon}),\quad \frac{\partial^j}{\partial\Im(\mu_3)^j}H(\mu_3)\ll_j T^{1-j}
\]
for all $j\geq 2$ and thus
\[
    |\frac{\partial}{\partial\Im(\mu_3)}H(\mu_3)|\gg 1
\]
 except if $u\asymp |y|^{2/3}$. If $u\asymp |y|^{2/3}$ is not the case, we can apply Lemma \ref{lemma:intbyparts} and the contribution is negligible. We can thus restrict the $u$ integral to $u\asymp |y|^{2/3}$ at the cost of a negligible error.

Assuming $|y|\leq T^{3-\varepsilon}$ we conclude $u^{1/2}\ll T^{1-\varepsilon/3}$ and we can insert the uniform asymptotic formula (\ref{eq:4.17}) for the $\tilde{K}$-function.
The error term is negligible for $M$ large enough as $|\nu_1|\asymp T$. Now, we can apply Lemma \ref{lemma:intbyparts} to the $\nu_1$ integral with
\[
    H(\nu_1)=\omega(4\sqrt{u},\Im (3\nu_1))+\Im(h_\infty(\mu))\ .
\]
We can again use Lemma \ref{lemma:derivative_hinf} together with $u^{1/2}\ll T^{1-\varepsilon/3}$ and the derivatives of $\mathrm{arccosh}$ to see that
\[
    \frac{\partial}{\partial\Im(\nu_1)}H(\nu_1)=\mathrm{arccosh}\Big(\frac{\Im(3\nu_1)}{4\sqrt{u}}\Big)+\log\Big|\frac{\tilde{\mu}_1}{\tilde{\mu}_2}\Big|+O(T^{-1+\delta+\varepsilon}),\quad \frac{\partial^j}{\partial\Im(\nu_1)^j}H(\mu_3)\ll_j T^{1-j}
\]
for all $j\geq 2$ and thus
\[
    |\frac{\partial}{\partial\Im(\nu_1)}H(\nu_1)|\gg \log T\ .
\]
The other parameters for Lemma \ref{lemma:intbyparts} are $Y=Q=T$ and $U=T^\delta$ and thus the $\nu_1$ integral is negligible. This completes the proof of (c) for the $\tilde{K}$-part.

If $|y|> T^{3-\varepsilon}$, we differentiate $j$ times with respect to $y$ under the integral sign. Keeping in mind that $u$ is restricted to $u\asymp |y|^{2/3}$, each such differentiation produces a factor $T|y|^{-1}+u^{-1}\asymp (T+|y|^{1/3})|y|^{-1}$. A trivial estimate using (\ref{eq:BoundSpec_h}) and (\ref{eq:4.13}) 
completes the proof of (d) for the $\tilde{K}$-part.

The $\tilde{J}$ part of (\ref{eq:lemma4}) goes analogous with (\ref{eq:4.20}) instead of (\ref{eq:4.17}).
\end{proof}

\begin{lemma}
\label{lemma:fine_bound_inttransfrom_long}
Let $h\in \CH(A_0,\tilde{\mu})$ and let $\mathcal{Y}:=\min(|y_1|^{1/3}|y_2|^{1/6},|y_1|^{1/6}|y_2|^{1/3})$.
\begin{enumerate}[(a)]
    \item If $\mathcal{Y}\leq T^{1-\varepsilon}$, then for any $B\geq 0$
    \[
        \Phi_{w_6}^h(y_1,y_2)\ll_{\varepsilon,B} T^{-B}\ .
    \]
    \item If $T^{1-\varepsilon}<\mathcal{Y}$, then
    \[
        |y_1|^{j_1}|y_2|^{j_2}\frac{\partial^{j_1}}{\partial y_1^{j_1}}\frac{\partial^{j_2}}{\partial y_2^{j_2}} \Phi_{w_6}^h(y_1,y_2)\ll_{\varepsilon,j} T^{3+2\delta+\varepsilon}(T+|y_1|^{1/2}+|y_1|^{1/3}|y_2|^{1/6})^{j_1}(T+|y_2|^{1/2}+|y_2|^{1/3}|y_1|^{1/6})^{j_2}
    \]
    for any $j_1,j_2\in\IN_0$.
\end{enumerate}
Let $g\in \CH(A_0)$ such that  $g$ also satisfies (\ref{eq:rapid_decay_awayfrommu}) and (\ref{eq:BoundSpec_h}), and $g(\mu)\exp(- h_\infty(\mu))$ satisfies (\ref{eq:bound_derivatives_h})\ .
\begin{enumerate}[(a)]
\setcounter{enumi}{2}
    \item If $\mathcal{Y}\leq T^{1-\varepsilon}$, then for any $B\geq 0$
    \[
        \Phi_{w_6}^g(y_1,y_2)\ll_{\varepsilon,B} T^{-B}\ .
    \]
\end{enumerate}
\end{lemma}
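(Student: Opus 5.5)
Parts (a) and (b) are exactly \cite{BB20}*{Lemma 9}, proven in \cite{BB20}*{Section 13}, so I would simply quote them, as was done for parts (a) and (b) of Lemma \ref{lemma:fine_bound_inttransfrom_voronoi}. The real work is (c). There I follow the same proof, and the only new feature is the extra phase $\exp(h_\infty(\mu))$. As in the proof of Lemma \ref{lemma:fine_bound_inttransfrom_voronoi}, I first replace $g$ by $h\exp(h_\infty)$ up to a negligible error. Here $h:=g\exp(-h_\infty)$ is smoothly truncated to the $T^{\delta+\varepsilon}$-balls around the $w(\tilde\mu)$ and satisfies (\ref{eq:rapid_decay_awayfrommu}), (\ref{eq:bound_derivatives_h}) and (\ref{eq:BoundSpec_h}). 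I then treat only the ball around $\tilde\mu$, since the other Weyl translates are analogous. By Lemma \ref{lemma:derivative_hinf}, on this ball $h_\infty$ is linear in $\Im\mu_i$ and $\Im\nu_{i+1}$ up to an error $O(T^{-1+\delta+\varepsilon})$ in the first derivative, and its higher derivatives are $\ll T^{1-j}$. The effect of $h_\infty$ is therefore to shift every first-derivative condition in the integration-by-parts arguments by a bounded constant, depending only on $c_1,c_2$. The second-derivative hypotheses of Lemma \ref{lemma:intbyparts}, with $Y=Q=T$, are unaffected.

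For the $(++)$ sign case I insert (\ref{eq:5.8}) and (\ref{eq:5.7}). The trigonometric prefactor is harmless in generic position, since it is a ratio of cosines of size $\asymp e^{\frac{3\pi}{2}(|\nu_1|+|\nu_2|-|\nu_3|)/\ldots}$ that combines with $\spec(\mu)$ as in \cite{BB20}. I parametrize $\mu$ by $(\mu_2,\nu_3)$. The $\mu_2$-integral has phase $\Im\mu_2\log(|y_1/y_2|^{1/2}u^3)+\Im h_\infty$. By Lemma \ref{lemma:intbyparts}, this localizes $u\asymp |y_2/y_1|^{1/6}$ up to a negligible error, because the shift coming from $\partial h_\infty/\partial\Im\mu_2$ is a bounded constant. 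Then $\sqrt{1+u^2}\ll 1+|y_2/y_1|^{1/6}$, so both Bessel arguments are $\ll |y_1|^{1/2}+|y_1|^{1/3}|y_2|^{1/6}$ and $\ll |y_2|^{1/2}+|y_2|^{1/3}|y_1|^{1/6}$, respectively. Using $\mathcal{Y}\le T^{1-\varepsilon}$, I need to check in both regimes $|y_1|\lessgtr|y_2|$ that these arguments are at most $T^{1-\varepsilon'}$. If one of $y_i$ is very large, I expect a separate argument where the Bessel function is in its oscillatory or transition range. The $\mu_2$-localization combined with the smaller argument then should already give the bound, as in \cite{BB20}. When both arguments are $\le T^{1-\varepsilon'}$, I insert the asymptotic (\ref{eq:4.17}) for both $\tilde K_{3\nu_3}$ factors. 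The $\nu_3$-phase is $\omega(\cdot,3\Im\nu_3)+\omega(\cdot,3\Im\nu_3)+\Im h_\infty$, with derivative $\mathrm{arccosh}(\cdot)+\mathrm{arccosh}(\cdot)+O(1)\gg\log T$ for each sign combination in the real parts. Lemma \ref{lemma:intbyparts} with $U=T^\delta$ then gives negligibility.

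For the other sign combinations $(\pm\mp)$ and $(--)$, I use the corresponding integral representations from \cite{BB20}*{Section 5}, which involve $J^\pm$ and (\ref{eq:4.20}), and argue the same way. This is the step I expect to be the main obstacle. In those representations the phase in the "$\mu_2$" direction already carries the $\mathrm{arcsinh}$-type terms. One must check that adding a bounded linear shift from $h_\infty$ cannot create a new stationary point in the region $\mathcal{Y}\le T^{1-\varepsilon}$. I would handle this by noting that, below the threshold, the Bessel phases have derivatives of size $\log T$ in $\nu$. A bounded shift from $h_\infty$ can only move the stationary locus in $u$ by a constant factor, so it can never cancel this $\log T$ growth. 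Hence the same case distinctions as in \cite{BB20}*{Section 13} go through with constants altered.
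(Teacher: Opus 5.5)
\paragraph{The $\nu_3$-step fails for the mixed-sign terms.}
Your outline matches the paper's. You quote \cite{BB20}*{Lemma 9} for (a) and (b). For (c) you trade $g$ for a localized $h\exp(h_\infty)$, use Lemma \ref{lemma:derivative_hinf}, insert (\ref{eq:5.8}) and (\ref{eq:5.7}), and integrate by parts in $\mu_2$ to pin $u=C|y_2/y_1|^{1/6}(1+O(T^{-\varepsilon/2}))$. The gap comes next. Put $x_1=4\pi|y_1|^{1/2}\sqrt{1+u^2}$ and $x_2=4\pi|y_2|^{1/2}\sqrt{1+u^{-2}}$. After inserting (\ref{eq:4.17}) for both Bessel factors, the phases are $\pm\omega(x_2,|3\nu_3|)\pm\omega(x_1,|3\nu_3|)$. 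Your claim that the $\nu_3$-derivative is $\gg\log T$ for every sign combination holds only for the two equal-sign terms. For the two mixed terms the $\mathrm{arccosh}$'s cancel, and the derivative is $\pm3\log(x_1/x_2)+3\log|\tilde\mu_3/\tilde\mu_1|^{1/2}+O(T^{-\varepsilon})$. Since $x_1/x_2=u|y_1/y_2|^{1/2}\approx C|y_1/y_2|^{1/3}$, this vanishes for a specific ratio $y_1/y_2\asymp 1$. Even without $h_\infty$ such a stationary point exists, at $y_1\approx y_2$. So the bounded shift is not acting against a $\log T$ background, and your last paragraph on the other sign cases rests on the same incorrect heuristic.

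\paragraph{The missing idea: a third integration by parts, in $u$.}
In the stationary case, the $\mu_2$- and $\nu_3$-localizations together force $|u-C|\ll T^{-\varepsilon/2}$. Insert a cutoff $\psi(u)$ with $\psi^{(j)}\ll T^{j\varepsilon/2}$. The $u$-phase is $\epsilon\,\omega(x_2(u),|3\nu_3|)-\epsilon\,\omega(x_1(u),|3\nu_3|)+3\Im(\mu_2)\log u$. Use $\partial_x\omega(x,t)=-\sqrt{t^2-x^2}/x$, $x_1'/x_1=u/(1+u^2)$ and $x_2'/x_2=-1/(u(1+u^2))$. Then the derivative is $(\epsilon|3\Im\nu_3|+3\Im\mu_2)/u$ up to lower-order terms. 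In the positive chamber this is $6\Im\nu_2/u$ or $-6\Im\nu_1/u$ up to sign, hence $\asymp T$ because $\tilde\mu$ is away from the walls. Lemma \ref{lemma:intbyparts} with $R=Y=T$, $U=T^{-\varepsilon/2}$ and $Q=1$ then makes the term negligible.

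\paragraph{The large-argument regime is not handled.}
Take $|y_1|\ge|y_2|$. Then $x_2\asymp\mathcal{Y}\le T^{1-\varepsilon}$, but $x_1\asymp|y_1|^{1/2}$ is not controlled by $\mathcal{Y}$. It can lie in the transition range $x_1\asymp T$, where (\ref{eq:4.17}) is unavailable, so ``I expect a separate argument'' leaves a real hole. The paper treats $|y_2|\le|y_1|T^{-\varepsilon}$ by using the integral representation (\ref{eq:4.12}) for the first Bessel function only. The $\nu_3$-phase then contains $|3\Im\nu_3|\,v\pm\omega(x_2,|3\nu_3|)$, so integration by parts in $\nu_3$ localizes $\cosh v\asymp T/x_2\ge T^{\varepsilon}$. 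The $v$-integral has phase $|3\nu_3|v\pm x_1\sinh v$, whose derivative is $\asymp T|y_1/y_2|^{1/3}\ge T^{1+\varepsilon/3}$, so it is negligible.

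\paragraph{The trigonometric prefactor.}
The prefactor in (\ref{eq:5.8}) is not harmless in every chamber. The paper uses Weyl invariance of $K_{w_6}^{++}$ to reduce to the chamber $\Im\nu_1,\Im\nu_2>0$, where the prefactor equals $\frac12+O(T^{-B})$.
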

\begin{proof}
In this proof, we do not use $\varepsilon$-convention.

The first two points are simply \cite{BB20}*{Lemma 9} and are proven in \cite{BB20}*{13. Proof of Lemma 9}.

For part (c), we follow the same ideas as in \cite{BB20}*{13. Proof of Lemma 9}, but we again have to account for the additional oscillation coming from $h_\infty$.

This added oscillation changes the proof in the following way. When applying integration by parts in the non-critical ranges, we often encounter a $\log$  that needs to be bounded from below by a constant, which is the case except when certain variables are of the same size. The added oscillation adds a constant factor inside the logarithm, but this factor is bounded independently of $T$ in terms of $c_1,c_2$, and thus does not change the fact that imbalanced variables lead to a negligible contribution. This is very similar to the above proof of Lemma \ref{lemma:fine_bound_inttransfrom_voronoi}.

We demonstrate this for the case $(\epsilon_1,\epsilon_2)=(+,+)$. The other cases go exactly as in \cite{BB20} with the same modifications.

Without loss of generality, we can assume $y_1\geq y_2>0$. We again replace $g(\mu)\exp(- h_\infty(\mu))$ at the cost of a negligible error with a real analytic function $h$ that also satisfies (\ref{eq:rapid_decay_awayfrommu}), (\ref{eq:bound_derivatives_h}), (\ref{eq:BoundSpec_h}), and is supported in $\min_{w\in W}\|\mu-w(\tilde{\mu})\|\leq T^{\delta+\varepsilon}$. As $K_{w_6}^{++}$ is invariant under the Weyl group, we assume without loss of generality that $h$ is only supported in the positive Weyl chamber $\Im(\nu_1),\Im(\nu_2)>0$ and that $\tilde{\mu}$ lies in this chamber. There we have
\[
    \frac{\cos(\frac{3}{2}\pi\nu_1)\cos(\frac{3}{2}\pi\nu_2)}{\cos(\frac{3}{2}\pi\nu_3)}=\frac{1}{2}+O_B(T^{-B})
\]
for any $B\geq 0$. Using this, we insert (\ref{eq:5.8})
and the integral representation (\ref{eq:5.7}) for $K_{w_6}^{++}$.

We start by considering the $\mu_2$ integral. This is equal to
\[
    \int_{(0)}h(\mu)\Big(\frac{y_1}{y_2}\Big)^{\frac{1}{2}\mu_2}u^{3\mu_2}\exp(h_\infty(\mu))\spec(\mu)\mathrm{d}\mu_2\ .
\]
The oscillation is $\exp(iH(\mu_2))$ with
\[ 
   H(\mu_2)=\Im(\mu_2)\log\Big(\frac{u^{3}y_1^{1/2}}{y_2^{1/2}}\Big)+\Im(h_\infty(\mu))\ .
\]
Using Lemma \ref{lemma:derivative_hinf} we get
\[
\frac{\partial}{\partial\Im(\mu_2)}H(\mu_2)=\log\Big(\frac{u^{3}y_1^{1/2}}{y_2^{1/2}}\cdot\frac{|\tilde{\mu}_2|}{4|\tilde{\mu_3}\tilde{\mu_1}|^{1/2}}\Big)+O(T^{-1+\delta+\varepsilon}),\quad 
\frac{\partial^j}{\partial\Im(\mu_2)^j}H(\mu_2)\ll_j T^{1-j}
\]
for all $j\geq 2$, and thus 
\[
    \Big|\frac{\partial}{\partial\Im(\mu_2)}H(\mu_2)\Big|\gg T^{-\epsilon}
\]
except if 
\begin{equation}
    \label{eq:formula_u++}
    u=\frac{y_2^{1/6}}{y_1^{1/6}}C(1+O(T^{-\epsilon/2})),
\end{equation}
where $C=\left(\frac{|\tilde{\mu}_2|}{4|\tilde{\mu_3}\tilde{\mu_1}|^{1/2}}\right)^{-1/3}$.

If this is not the case, we apply Lemma \ref{lemma:intbyparts} with $R=T^{-\varepsilon},U=T^\delta,Y=Q=T$, and the contribution is negligible. We are left with $u\asymp\frac{y_2^{1/6}}{y_1^{1/6}}$, and thus in particular
\begin{equation}
    \label{eq:formula_ys++}
    y^{1/2}_1\sqrt{1+u^2}\asymp y_1^{1/2},\quad y_2^{1/2}\sqrt{1+u^{-2}}\asymp y_2^{1/3}y_1^{1/6}\ .
\end{equation}
Using this together with $\mathcal{Y}\leq T^{1-\varepsilon}$ allows us to insert (\ref{eq:4.17})
into the second Bessel function in (\ref{eq:5.7}) with $t=\Im (3\nu_3)$, $x=4\pi y_2^{1/2}\sqrt{1+u^{-2}}\ll |t|^{1-\varepsilon}$, and $M$ large enough so that the error is negligible. For the first Bessel function, we distinguish two cases:

\textbf{Case 1: $y_2\geq y_1T^{-\varepsilon}$.} Using $\mathcal{Y}\leq T^{1-\varepsilon}$, we have
\begin{equation}
    \label{eq:bound_ys}
    y_1^{1/2}\leq y_2^{1/2}T^{\frac{\varepsilon}{2}}\leq y_2^{1/3}y_1^{1/6}T^{\varepsilon/2}\leq T^{1-\varepsilon/2}\ .
\end{equation}
We insert (\ref{eq:4.17})
for the first Bessel function in (\ref{eq:5.7}) at the cost of a negligible error. Looking at the $\nu_3$-integral, we have
\begin{align*}
    \int_{(0)}\exp\Big(\pm i\omega(4\pi y_2^{1/2}\sqrt{1+u^{-2}},|3\nu_3|)\pm i\omega(4\pi y_1^{1/2}\sqrt{1+u^{2}},|3\nu_3|)-h_\infty(\mu)\Big)\\
    \cdot h(\mu)f^\pm_M(4\pi y_2^{1/2}\sqrt{1+u^{-2}},|3\nu_3|)f^\pm_M(4\pi y_1^{1/2}\sqrt{1+u^{2}},|3\nu_3|)\spec(\mu)\mathrm{d}\nu_3\ ,
\end{align*}
where $f_M^+=f_M$ and $f_M^-=\overline{f}_M$ are as in (\ref{eq:4.17}).

With the phase function $H(\nu_3)=\pm \omega(4\pi y_2^{1/2}\sqrt{1+u^{-2}},|3\nu_3|)\pm \omega(4\pi y_1^{1/2}\sqrt{1+u^{2}},|3\nu_3|)-\Im(h_\infty(\mu))$, we have
\begin{equation*}
    \frac{\partial}{\partial\Im(\nu_3)}H(\nu_3)=\pm 3\mathrm{arccosh}\Big(\frac{|3\nu_3|}{4\pi y_2^{1/2}\sqrt{1+u^{-2}}}\Big)\pm 3\mathrm{arccosh}\Big(\frac{|3\nu_3|}{4\pi y_1^{1/2}\sqrt{1+u^{2}}}\Big)+3\log\Big|\frac{\tilde{\mu}_3}{\tilde{\mu}_1}\Big|^{1/2}+O(T^{-1+\delta+\varepsilon})
\end{equation*}
and $\frac{\partial^j}{\partial\Im(\nu_3)^j}H(3\nu_3)\ll T^{1-j}$ for $j\geq 2$. Using $\mathrm{arccosh}(x)=\log(2x)+O(x^{-2})$ together with (\ref{eq:formula_ys++}) and (\ref{eq:bound_ys}), we can bound the absolute value of the first derivative from below by $T^{-\varepsilon}$ except if the two $\pm$ signs are different and
\[
    \frac{y_1^{1/2}\sqrt{1+u^{2}}}{y_2^{1/2}\sqrt{1+u^{-2}}}=\left|\frac{\tilde{\mu}_3}{\tilde{\mu}_1}\right|^{\pm1/2} +O(T^{-\varepsilon/2})\ .
\]
In all other cases, we can use Lemma \ref{lemma:intbyparts} with $R=T^{-\varepsilon},U=T^\delta,Y=Q=T$ to prove that the contribution is negligible. In the above case, we can use (\ref{eq:formula_u++}) to localize $|u-C|\ll T^{-\varepsilon/2}$ at the  cost of a negligible error for some nonzero constant $C$, depending only on the relative sizes of the $\tilde{\mu}_i$ and $\tilde{\nu}_i$, and thus being bounded from below and from above in terms of $c_1,c_2$.

We insert a smooth, compactly supported cutoff function $\psi(u)$ with $\psi^{(j)}(u)\ll_j T^{\frac{\varepsilon}{2}j}$ for $j\geq 0$ into the integral and consider the $u$ integral
\begin{align*}
    \int_{0}^\infty \exp\Big(\epsilon i \omega(4\pi y_2^{1/2}\sqrt{1+u^{-2}},|3\nu_3|)-\epsilon i \omega(4\pi y_1^{1/2}\sqrt{1+u^{2}},|3\nu_3|) \Big)u^{3\mu_2}\\
    \cdot \psi(u)f^\epsilon_M(4\pi y_2^{1/2}\sqrt{1+u^{-2}},|3\nu_3|)f^{-\epsilon}_M(4\pi y_1^{1/2}\sqrt{1+u^{2}},|3\nu_3|)\frac{\mathrm{d}u}{u}
\end{align*}
for $\epsilon\in \{\pm\}$. With the phase function
\[
    H(u)=\epsilon \omega(4\pi y_2^{1/2}\sqrt{1+u^{-2}},|3\nu_3|)-\epsilon \omega(4\pi y_1^{1/2}\sqrt{1+u^{2}},|3\nu_3|)+\Im( 3\mu_2)\log u
\]
we have $|H^\prime(u)|\asymp T$ and $H^{(j)}(u)\ll T$ for $j\geq 2$. Using Lemma \ref{lemma:intbyparts} with $R=Y=T,U=T^{-\varepsilon/2}, Q=1$, we see that the $u$-integral is negligible.

\textbf{Case 2: $y_2\leq y_1T^{-\varepsilon}$.} We use (\ref{eq:4.12})
with $t=\Im (3\nu_3),x=4\pi y_1^{1/2}\sqrt{1+u^2}$ for the first Bessel function in (\ref{eq:5.7}). We consider the $\nu_3$ integral
\begin{align*}
    \int_{(0)}\exp\Big(\pm i\omega(4\pi y_2^{1/2}\sqrt{1+u^{-2}},|3\nu_3|)+i|3\nu_3|v+h_\infty(\mu)\Big)\\
    \cdot h(\mu)f^\pm_M(4\pi y_2^{1/2}\sqrt{1+u^{-2}},|3\nu_3|)\spec(\mu)\mathrm{d}\nu_3\ .\nonumber
\end{align*}
With the phase function $H(\nu_3)=3v\pm \omega(4\pi y_2^{1/2}\sqrt{1+u^{-2}},|3\nu_3|)+\Im (h_\infty(\mu))$ using Lemma \ref{lemma:derivative_hinf}, we have 
\[
    \frac{\partial}{\partial\Im(\nu_3)}H(\nu_3)=3v\pm 3\mathrm{arccosh}\Big(\frac{|3\nu_3|}{4\pi y_2^{1/2}\sqrt{1+u^{-2}}}\Big)+3\log\Big(\frac{|\tilde{\mu}_3|^{1/2}}{|\tilde{\mu}_1|^{1/2}}\Big)+O(T^{-1+\delta+\varepsilon})
\]
and $ \frac{\partial^j}{\partial\Im(\nu_3)^j}H(\nu_3)\ll T^{1-j}$ for $j\geq 2$ if $|\nu_3|\asymp T$. Lemma \ref{lemma:intbyparts} implies that the $\nu_3$ integral is negligible except if
\[
    \cosh v \asymp \frac{T}{y_2^{1/2}y_1^{1/6}}\geq T^\varepsilon\ .
\]
As above, we insert a smooth cutoff function $\psi(v)$ with $\psi^{(j)}(v)\ll_j 1$ for $j\geq 0$ at the cost of a negligible error, and the $v$ integral becomes
\[
    \int_\IR \psi(v)\cos(4\pi y_1^{1/2}\sqrt{1+u^2}\sinh v)\exp(i|3\nu_3|v)\mathrm{d}v\ .
\]
With the phase function $H(v)=|3\nu_3|v\pm 4\pi y_1^{1/2}\sqrt{1+u^2}\sinh v$ we have
$|H^\prime(v)|\asymp T\Big(\frac{y_1}{y_2}\Big)^{1/3}$ and $H^{(j)}(v)\ll T\Big(\frac{y_1}{y_2}\Big)^{1/3}$ for $j\geq 2$. Lemma \ref{lemma:intbyparts} with $Y=R=T\Big(\frac{y_1}{y_2}\Big)^{1/3}, Q=1,U=1$ implies that the $v$ integral is negligible. This concludes the $(++)$ case.

The remaining cases $(--),(\pm,\mp)$ are analogous to \cite{BB20}*{Proof of Lemma 9} using the other integral representation and results regarding Bessel functions from \cite{BB20}*{Sections 4 and 5}.
\end{proof}

\begin{lemma}
\label{lemma:bound_fourier_voronoi}
Let $s\in \IC$ with $|\Im(s)|\leq T^\varepsilon$ and $\Re(s)$ be bounded independently of $T$. Let $h\in \CH(A_0)$ satisfy (\ref{eq:rapid_decay_awayfrommu}) and (\ref{eq:BoundSpec_h}). Let $w$ be a smooth function with compact support in $\IR^+$ and bounded derivatives.
\begin{enumerate}[(a)]
    \item If $T^{3-\varepsilon}\leq |\Xi|\leq T^{3+\varepsilon}$, then
    \[
        \int_\IR y^{-s}w(y)\Phi_{w_4}^h(y^2\Xi)\mathrm{d}y\ll_\varepsilon T^{\frac{3}{2}+2\delta+\varepsilon}
    \]
    independent of $\Xi$.
    \item Else,
    \[
        \int_\IR y^{-s}w(y)\Phi_{w_4}^h(y^2\Xi)\mathrm{d}y\ll_{B,\varepsilon} T^{-B}
    \]
    for any $B\geq 0$.
\end{enumerate}
The same holds for $w_5$ instead of $w_4$.
\end{lemma}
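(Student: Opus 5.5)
The plan is to insert the integral representation (\ref{eq:lemma4}) of $K_{w_4}$ into $\Phi_{w_4}^h(y^2\Xi)$ and do the $y$-integral first. As in the proof of Lemma \ref{lemma:fine_bound_inttransfrom_voronoi}, we first replace $h$, up to a negligible error, by a real analytic function supported in $\|\mu-w(\tilde\mu)\|\le T^{\delta+\varepsilon}$ and treat one Weyl translate at a time. We parametrize the $\mu$-plane by $\mu_3$ and $\nu_1$.

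\textbf{Step 1: localizing $u$.} The $y$-dependence of the kernel is $|y^2\Xi|^{-\mu_3}\exp(-2i\pi^3 y^2\Xi/u)$. The $\mu_3$-integral carries the phase $\Im(\mu_3)\log(u^{3/2}/(\pi^3 y^2|\Xi|))$. Lemma \ref{lemma:intbyparts} with $R\gg1$, $Y=Q=T$, $U=T^\delta$ localizes $u\asymp|\Xi|^{2/3}$ up to a negligible error, since $y\asymp1$ on the support of $w$.

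\textbf{Step 2: case (b).} If $|\Xi|\le T^{3-\varepsilon}$, then Lemma \ref{lemma:fine_bound_inttransfrom_voronoi}(a) already gives that $\Phi_{w_4}^h(y^2\Xi)$ is negligible pointwise for $y$ in the support, and we integrate trivially.

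If $|\Xi|\ge T^{3+\varepsilon}$, we instead integrate by parts in $y$. By Lemma \ref{lemma:fine_bound_inttransfrom_voronoi}(b), each $y$-derivative of $\Phi_{w_4}^h(y^2\Xi)$ costs a factor $\ll T+|\Xi|^{1/3}$, and this does not help by itself. So we look directly at the $y$-integral
\[
\int y^{-s-2\mu_3}w(y)e(-\pi^2y^2\Xi/u)\,\mathrm dy .
\]
Its phase has derivative of size $|\Xi|/u\asymp|\Xi|^{1/3}$ coming from the exponential, against at most $T+|\Im s|\ll T$ from $y^{-2\mu_3}$. When $|\Xi|^{1/3}\ge T^{1+\varepsilon/3}$ the first term dominates, so Lemma \ref{lemma:intbyparts} makes the $y$-integral negligible. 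This proves (b).

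\textbf{Step 3: case (a).} For $T^{3-\varepsilon}\le|\Xi|\le T^{3+\varepsilon}$ we need square-root cancellation relative to the trivial bound $T^{3+2\delta+\varepsilon}$. We evaluate the $y$-integral by stationary phase via Lemma \ref{lemma:intbyparts_asymptotic}, after the substitution $y^2\to y$, with $x\asymp\Xi/u\asymp T$ and $t=\Im(2\mu_3)+\Im s$. This gives
\[
|x|^{-1/2}\,\Big|\frac{t}{2\pi e x}\Big|^{-it}\,\tilde w_x(t/x)\asymp T^{-1/2}
\]
times a new phase in $\mu_3$, namely $-\Im(2\mu_3)\log|\Im(2\mu_3)u/(2\pi e\,\pi^2\Xi)|$.

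We then bound the remaining $u$- and $\mu$-integrals as in the proof of Lemma \ref{lemma:fine_bound_inttransfrom_voronoi}(b): trivially, using (\ref{eq:BoundSpec_h}) and (\ref{eq:4.13}). The $u$-range $u\asymp|\Xi|^{2/3}$ contributes $O(1)$ against the measure $\mathrm du/u$. The outcome is $T^{-1/2}\cdot T^{3+2\delta+\varepsilon}$, which should give the claimed $T^{3/2+2\delta+\varepsilon}$ if one more factor of $T^{-1}$ can be found. If it cannot, the bound only reaches $T^{5/2+2\delta+\varepsilon}$.

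\textbf{Main obstacle.} The hard step is extracting that extra $T^{-1}$. The plan is to use the new phase from Step 3. Its $\mu_3$-derivative is $\log|\mu_3u/\Xi|$ plus a constant, so a further application of Lemma \ref{lemma:intbyparts} in $\mu_3$ confines $\Im\mu_3$ to a window of size $O(T^{1/2+\varepsilon})$ in the direction transverse to $\tilde\mu$. Together with the $\nu_1$-stationary phase from the Bessel asymptotics (\ref{eq:4.17}) or (\ref{eq:4.20}), which gives another factor $T^{-1/2}$, this should yield the required saving. We expect this two-variable stationary phase, done uniformly in $u$, to be where the real work lies.

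For $w_5$ we use $\Phi_{w_5}^h(y)=\Phi_{w_4}^{\tilde h}(-y)$ as in the proof of Lemma \ref{lemma:fine_bound_inttransfrom_voronoi}.
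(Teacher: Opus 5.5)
\textbf{Part (a) is not proved, and your proposed repair cannot work.} After Step 3 you are at $T^{5/2+2\delta+\varepsilon}$. Under \emph{Main obstacle} you hope to recover the missing $T^{-1}$ from further integration by parts and stationary phase in $\mu_3$ and $\nu_1$. But you have already truncated the $\mu$-integral to balls of radius $T^{\delta+\varepsilon}$ with $\delta$ small, and the phases in question have second derivatives of size $T^{-1}$. Confining $\Im\mu_3$ to a window of length $T^{1/2+\varepsilon}$ is therefore vacuous. Likewise, stationary phase over a range of length $T^{\delta+\varepsilon}$ with $\phi''\asymp T^{-1}$ saves nothing.

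There is also a hypothesis problem. The lemma only assumes $h\in\CH(A_0)$ with (\ref{eq:rapid_decay_awayfrommu}) and (\ref{eq:BoundSpec_h}); there is no derivative bound (\ref{eq:bound_derivatives_h}). So integration by parts in $\mu$ (your Step 1) is not available. Nor is your appeal to Lemma \ref{lemma:fine_bound_inttransfrom_voronoi}(a), which needs $h\in\CH(A_0,\tilde\mu)$. This weak hypothesis is deliberate: the lemma is also applied to the $W$-type test functions, whose oscillation $e^{h_\infty}$ destroys such derivative bounds.

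The real loss in your bookkeeping sits in the kernel itself. Bounding the Bessel factors by (\ref{eq:4.13}) and the $u$-integral trivially throws away a factor $T^{-1}$ that lives in $K_{w_4}$, and no $\mu$-integration over a $T^{\delta}$-ball can recover it.

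\textbf{The missing idea is to use the Mellin--Barnes definition of $K_{w_4}$ rather than (\ref{eq:lemma4}).} Interchanging the $y$- and $s_1$-integrals gives
\[
\int_\IR y^{-s}w(y)K_{w_4}(y^2\Xi,\mu)\,\mathrm{d}y=\int_{(0)}\hat w(1-s-2s_1)\,|\Xi|^{-s_1}\,\tilde G^{\epsilon}(s_1,\mu)\,\frac{\mathrm{d}s_1}{2\pi i},
\]
where $\hat w$ is the Mellin transform of $w$. Its rapid decay in vertical strips lets you truncate to $|\Im s_1|\le T^{\varepsilon}$; this is exactly what the $y$-average buys. In that range, Stirling gives $\tilde G^{\epsilon}(s_1,\mu)\ll T^{3\Re s_1-3/2}$ for $\mu$ near $w(\tilde\mu)$.

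On $\Re s_1=0$ this is $T^{-3/2}$, and (\ref{eq:BoundSpec_h}) yields (a) immediately. For (b), shift to $\Re s_1=B$ if $|\Xi|>T^{3+\varepsilon}$, and to $\Re s_1=-B$ if $|\Xi|<T^{3-\varepsilon}$. The integrand is then $\ll T^{-3/2}(T^3/|\Xi|)^{\Re s_1}\le T^{-3/2-\varepsilon B}$. Any poles of $\tilde G^{\epsilon}$ crossed have $|\Im s_1|\asymp T$, where $\hat w$ is negligible. No derivative information on $h$ is used, which is why the lemma holds under the stated hypotheses.
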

\begin{proof}
    We can replace $w_4$ with $w_5$ by the same argument as in the proof of Lemma \ref{lemma:fine_bound_inttransfrom_voronoi}.
    Plugging in the definition of $\Phi_{w_4}$, we have
    \[
        \int_\IR y^{-s}w(y)\Phi_{w_4}^h(y^2\Xi)\mathrm{d}y=\int_{\Re\mu=0}h(\mu)\spec(\mu)\int_\IR y^{-s}w(y)K_{w_4}(y^2\Xi,\mu)\mathrm{d}y\mathrm{d}\mu\ .
    \]
    
    As $h$ satisfies (\ref{eq:rapid_decay_awayfrommu}), we can start by truncating the $\mu$ integral to $\min_{w\in W}\|\mu-w(\tilde    {\mu})\|\leq T^{\delta+\varepsilon}$ at the cost of a negligible error.
    
    We then insert the definition of $K_{w_4}$. The inner integral reads
    \begin{align*}
        \int_\IR y^{-s}w(y)\int_{(0)}|\Xi|^{-s_1}|y|^{-2s_1}\tilde{G}^\epsilon(s_1,\mu)\frac{\mathrm{d}s_1}{2\pi i}\mathrm{d}y\ .
    \end{align*}
    Let $\hat{w}$ be the Mellin transform of $w$. We have
    \begin{align*}
        \int_\IR y^{-s}w(y)\int_{(0)}|\Xi|^{-s_1}|y|^{-2s_1}\tilde{G}^\epsilon(s_1,\mu)\frac{\mathrm{d}s_1}{2\pi i}\mathrm{d}y
        =\int_{(0)}\hat{w}(1-s-2s_1)|\Xi|^{-s_1}\tilde{G}^\epsilon(s_1,\mu)\frac{\mathrm{d}s_1}{2\pi i}\ .
    \end{align*}
    We have $\tilde{G}^\epsilon(s_1,\mu)\ll T^{3\Re s_1-\frac{3}{2}}$ away from poles. If $|\Xi|>T^{3+\varepsilon}$, we shift the $s_1$ contour to $B>0$ large. All poles are negligible due to the rapid decay of $\hat{w}$, and we can bound the integrand by $T^{-\frac{3}{2}-B\varepsilon}$. Because of the rapid decay of $\hat{w}$, we can truncate the $s_1$ integral at $|\Im (s_1)|\leq T^\varepsilon$. If $|\Xi|<T^{3-\varepsilon}$, we do the same but shift to the left.
    
    If $T^{3-\varepsilon}\leq |\Xi|\leq T^{3+\varepsilon}$, we truncate the $s_1$ contour as above and estimate everything trivially using that $h$ satisfies (\ref{eq:BoundSpec_h}). 
\end{proof}

The three previous lemmata all accounted for the test functions from both the $V$-part and the $W$-part of the approximate functional equation. The following Lemma will only account for the $V$-part. It is not needed for the $W$-part, as the asymmetry, together with Lemma \ref{lemma:fine_bound_inttransfrom_long}, will be enough to bound this.

\begin{lemma}
\label{lemma:bound_fourier_long}
Let $s\in\IC$ with $|\Im(s)|\leq T^\varepsilon$ and $\Re(s)$ be bounded independently of $T$. Let $h\in \CH(A_0,\tilde{\mu})$ and let $w$ be a smooth function with compact support in $\IR^+$ and bounded derivatives.

Then the following holds for $|\Xi_2|\ll T^{2+\frac{1}{120}}$:
\begin{enumerate}[(a)]
    \item
    \begin{align*}
        \int_\IR y^{-s}w(y)\Phi_{w_6}(y^2\Xi_1,\Xi_2)\mathrm{d}y\ll_\varepsilon T^{\frac{3}{2}-\frac{1}{6}+\frac{1}{360}+2\delta+\varepsilon}
    \end{align*}
    uniform in $\Xi_2$ with polynomial dependence on $s$.
    \item For $|U_1|\geq T^{1-\frac{1}{120}}$ and $T^{3-\frac{1}{120}}\leq |\Xi_2\Xi_1^{-1}U_1^3|\leq T^{3+\frac{1}{120}}$, we have
    \begin{align*}
        \int_\IR y^{-s}w(y)\Phi_{w_6}(y^2\Xi_1,\Xi_2)e(y U_1)\mathrm{d}y\ll_\varepsilon T^{\frac{3}{2}-\frac{1}{240}+2\delta+\varepsilon}\ 
    \end{align*}
    uniform in $\Xi_1,\Xi_2$ with a polynomial dependence on $s$.
\end{enumerate}
\end{lemma}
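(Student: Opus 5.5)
We follow the strategy of \cite{BB20}*{Section 7 and Proof of Lemma 10}, adapted to Poisson summation in a single variable. Insert the definition of $\Phi_{w_6}$, truncate the $\mu$-integral to $\min_{w}\|\mu-w(\tilde\mu)\|\le T^{\delta+\varepsilon}$ using (\ref{eq:rapid_decay_awayfrommu}), and by Weyl invariance work near $\tilde\mu$ in the positive chamber. Treat the sign cases separately; we describe $(+,+)$ and the others go through the analogous Bessel representations of \cite{BB20}*{Sections 4 and 5}. Insert (\ref{eq:5.8}) and (\ref{eq:5.7}). Replace $y_1=y^2\Xi_1$, so that the first Bessel function is $\tilde K_{3\nu_3}(2\pi y|\Xi_1|^{1/2}\sqrt{1+u^2})$ and the factor $|y_1/y_2|^{\mu_2/2}$ contributes $y^{\mu_2}$. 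By Lemma \ref{lemma:fine_bound_inttransfrom_long}(a) we may restrict to $\mathcal Y\ge T^{1-\varepsilon}$. The first localisation in $\mu_2$ (stationary point (\ref{eq:formula_u++})) fixes $u\asymp (y_2/y_1)^{1/6}$. Then insert the integral representation (\ref{eq:4.12}) for both $\tilde K$-functions, introducing variables $v_1,v_2$. The $y$-integral is now of the form
\[
\int y^{-s+\mu_2}w(y)\cos\bigl(4\pi y|\Xi_1|^{1/2}\sqrt{1+u^2}\sinh v_1\bigr)\,e(yU_1)\,\mathrm dy ,
\]
with $U_1=0$ in (a). It can be evaluated by Lemma \ref{lemma:intbyparts_asymptotic}, and is negligible unless
\[
|\Im\mu_2|\asymp |y|\cdot\bigl|2|\Xi_1|^{1/2}\sqrt{1+u^2}\sinh v_1\pm U_1\bigr| .
\]
When this holds it equals a factor of size $T^{-1/2}$ times an explicit phase.

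Next we perform the $\mu$-integration by stationary phase. After the $y$-integral, the phase in $(\mu_2,\nu_3)$ is
\[
-\Im\mu_2\log\frac{|\Im\mu_2|}{2\pi e X} \;+\; 3\Im(\nu_3)(v_1+v_2) \;+\; \Im\mu_2\log\bigl(u^3(\cdot)\bigr),
\]
where $X$ is the linear coefficient determined above. This is where the essential new input lies. The Hessian in $\mu_2$ is of size $T^{-1}$, and the remaining variables $u,v_1,v_2$ enter through a two-dimensional phase. For (a) the trivial bound after the one-dimensional evaluation would give only $T^{3/2}$. To save $T^{1/6-1/360}$ we do the following:
\begin{enumerate}[(i)]
\item Integrate over $\nu_3$ by parts, which forces $v_1+v_2$ to lie in a window of length $T^{-1+\delta}$ around a fixed value.
\item Integrate over $\mu_2$ and $u$ jointly by a two-dimensional stationary phase argument.
\item Show that the $2\times 2$ Hessian in $(u,v_1)$ is nondegenerate of size $\gg T\cdot T^{-\eta}$ outside a small exceptional set. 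Each nondegenerate direction gains $T^{-1/2}$ relative to the trivial volume.
\end{enumerate}
Near degenerate points, where the determinant vanishes to first order, we cut out a neighbourhood of size $T^{-\kappa}$. On that neighbourhood we use a trivial bound, and elsewhere stationary phase (or Lemma \ref{lemma:intbyparts}). Optimising $\kappa$ against the constraint $|\Xi_2|\ll T^{2+1/120}$ should produce the exponent $\frac16-\frac1{360}$.

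For (b) the hypothesis $|U_1|\ge T^{1-1/120}$, together with $|\Xi_2\Xi_1^{-1}U_1^3|\asymp T^3$ up to $T^{\pm1/120}$, places us in the regime where the term $yU_1$ competes with the Bessel phase. The stationarity condition from the $y$-integral then pins $\sinh v_1$ to $U_1/(2|\Xi_1|^{1/2})$ up to a relative error $T^{-1+\varepsilon}$. This gives a genuine curvature in $v_1$, and a one-dimensional second derivative test yields the saving $T^{-1/240}$ after balancing against the losses $T^{\pm1/120}$. Throughout, the derivatives of $h$ are controlled by (\ref{eq:bound_derivatives_h}), and the final trivial estimate uses (\ref{eq:BoundSpec_h}). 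The dependence on $s$ enters only through $y^{-s}$ and is polynomial since $|\Im s|\le T^\varepsilon$.

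The main obstacle is step (iii): proving that the Hessian determinant of the two-dimensional phase is large except on a quantitatively small set, uniformly in $\Xi_1$ and $\Xi_2$. We would first try to compute it explicitly at the stationary point using $\mathrm{arccosh}(x)=\log 2x+O(x^{-2})$ and the generic-position ratios of $\tilde\mu$. The genericity assumption (\ref{eq:genericposition}) should be exactly what keeps the determinant away from zero.
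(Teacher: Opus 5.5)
Your proposal has a genuine gap: the step that is supposed to produce the saving is never carried out, and nothing in it uses the hypothesis that actually drives the bound. In (a), the only input beyond the trivial $T^{3/2+2\delta}$ is $|\Xi_2|\ll T^{2+\frac{1}{120}}$. The paper bounds the main range by $T^{\frac23+2\delta+\varepsilon}|\Xi_2|^{1/3}$, so the smallness of $\Xi_2$ is the entire saving. Your steps (i)--(iii) never say how the size of $\Xi_2$ enters. The exponent $\frac16-\frac1{360}$ is only asserted to come out of an optimisation around the degenerate set of a Hessian you have not computed. You flag (iii) as open yourself, and you switch between a Hessian in $(\mu_2,u)$ and one in $(u,v_1)$.

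Several reductions are also wrong as stated. The $\mu$-variables range over a ball of radius $T^{\delta+\varepsilon}$, on which $h$ is smooth at scale $T^\delta$ and the phase is linear up to $O(T^{2\delta-1})$. So integrating over $\nu_3$, whose phase is exactly $3\Im(\nu_3)(v_1+v_2)$, only gives $|v_1+v_2|\ll T^{-\delta+\varepsilon}$, not a window of length $T^{-1+\delta}$. For the same reason the curvature $T^{-1}$ in $\mu_2$ yields no stationary-phase gain. In (b), the stationarity condition of the $y$-integral is $\Im(\mu_2)/y\pm 4\pi|\Xi_1|^{1/2}\sqrt{1+u^2}\sinh v_1+2\pi U_1\approx 0$, with $\Im(\mu_2)\asymp T$ and $|U_1|$ possibly as small as $T^{1-\frac{1}{120}}$. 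It therefore does not pin $\sinh v_1$ to $U_1/(2|\Xi_1|^{1/2})$ with relative error $T^{-1+\varepsilon}$; the relative error is $T/|U_1|$ at best. The ``one-dimensional second derivative test'' giving $T^{-1/240}$ has no derivation. Finally, only the $(+,+)$ case is addressed, and the other sign patterns need different Bessel representations.

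The missing idea is to stay on the Mellin--Barnes side, where the saving comes from measuring polynomial sublevel sets and no Hessian nondegeneracy is needed.
In (a), since $U_1=0$, the $y$-integral is just $\hat w(1-2s_1-s)$. This truncates $|s_1|\le T^\varepsilon$ and makes the bound independent of $\Xi_1$. After Stirling, the $t_2=\Im(s_2)$-integral has phase derivative $\log|\prod_i(t_2+r_i)/((t_2+t_1)\Xi_2)|$. After integration by parts, one only has to measure the set where $|\prod_i(t_2+r_i)-\alpha_2(t_2+t_1)\Xi_2|\ll|\Xi_2U_2|E_2^{-1/2}T^{\varepsilon}$. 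This is a sublevel set of a monic cubic, of measure $\ll(|\Xi_2U_2|E_2^{-1/2}T^\varepsilon)^{1/3}$. In the extremal range $U_2\asymp E_2\asymp T$ with $|\Xi_2|\ll T^{2+1/120}$, this is $T^{5/6+1/360}$ instead of the trivial $T$.

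In (b), Lemma \ref{lemma:intbyparts_asymptotic} turns the $y$-integral into an explicit phase in $t_1$. The saving then comes from bounding the measure of the region where both $\partial_{t_1}g$ and $\partial_{t_2}g$ are small. This uses finitely many connected components (Milnor) and mixed derivatives of size $B_3^{-1}$. In the nearly generic range, one eliminates $t_2$ to get a degree-$9$ sublevel set in $t_1$, with a dichotomy on $|\frac{\alpha_1}{4Y_1}-1|$.

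These bounds are insensitive to degenerate stationary points, which is exactly what your step (iii) cannot control. The nonnegativity of $h^{\epsilon_1,\epsilon_2}$ also handles all sign patterns at once.
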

\begin{rem}
The constant $\frac{1}{240}$ is not optimized and could be improved at the cost of a longer proof.
\end{rem}
\begin{proof}
For this proof, we want to look at each of the balls around $w(\tilde{\mu})$ individually. To do this, we first truncate the $\mu$ integral to $\min_{w\in\mathcal{W}}\|\mu-w(\tilde{\mu})\|\leq T^{\delta+\epsilon}$ at the cost of a negligible error and split this into $6$ balls around the $w(\tilde{\mu})$ for $w\in\mathcal{W}$. Even if the contours are now truncated, we can still shift them because the additional terms are negligible due to the rapid decay of $h$ away from $w(\tilde{\mu})$. For the rest of the proof, we only treat the ball around $\tilde{\mu}$, which can be replaced by $w(\tilde{\mu})$ everywhere to analyze the other balls.

\begin{enumerate}[(a)]
    \item Plugging in the definition of $\Phi_{w_6}$ we have
    \[
    \int_\IR y^{-s}w(y)\Phi_{w_6}(y^2\Xi_1,\Xi_2)\mathrm{d}y
        =\ \int_{\Re(\mu)=0}h(\mu)\mathrm{spec}(\mu)\int_\IR y^{-s}w(y)K_{w_6}^{\epsilon_1,\epsilon_2}(y^2\Xi_1,\Xi_2,\mu)\mathrm{d}y\mathrm{d}\mu\ .
    \]

    Define $\hat{w}(s)$ as the Mellin transform of $w$. After plugging in the definition of $K_{w_6}$ we get that the inner integral is equal to
    \begin{align*}
        &\int_\IR y^{-s}w(y)K_{w_6}^{\epsilon_1,\epsilon_2}(y^2\Xi_1,\Xi_2,\mu)\mathrm{d}y\\
        =&\int_{(0)}\int_{(0)}G(s,\mu)S^{\epsilon_1,\epsilon_2}(s,\mu)|4\pi^2\Xi_1|^{-s_1}|4\pi^2\Xi_2|^{-s_2}\hat{w}(1-2s_1-s)\frac{\mathrm{d}s_1\mathrm{d}s_2}{(2\pi i)^2}\ .
    \end{align*}
    We start by truncating the $s_1$ contour at $|s_1|\leq T^\varepsilon$ by the rapid decay of $\hat{w}$. We also shift the $s_2$ contour to real part $\varepsilon$ for $|s_2|\leq T^B$ and discard the rest making a negligible error for a large enough $B$.
    
    We then partition the $s_2$ integral into dyadic ranges $U_2$ such that  $t_2:=\Im(s_2)\asymp U_2$. Furthermore, we define $\tilde{r}_i:=\Im(\tilde{\mu}_i)$, $B_2:=\min_{i} |t_2+\tilde{r}_i|$ and $1\leq l\leq 3$ the index that realizes this minimum. We also sum over $B_2$ in dyadic ranges. We distinguish the following cases:
    
    \textbf{Case 1: $|U_2|\leq T^{1/2}$\ .} In this case, we estimate the integrant trivially by $T^{-3+\frac{1}{4}+\varepsilon}$ and the $t_2$-integral and the $\mu$ integral bring this to $T^{\frac{3}{4}+2\delta+\varepsilon}$.
    
    \textbf{Case 2: $B_2\leq T^{1-\varepsilon}$\ .} We shift the $\mu_l$ contour to $B>0$ and the $\mu_j$ contour to $-B$ for some $j\neq l$ without picking up any residues as we have truncated the $\mu$ integral at the start. We then estimate trivially by $T^{O(1)-B}B_2^B\ll T^{O(1)-\varepsilon B}$, which is negligible for large enough $B$.
    
    \textbf{Case 3: $|U_2|> T^{1/2}$ and $B_2> T^{1-\varepsilon}$\ .}
    Define $r_i:=\Im(\mu_i)$. As $|\mu_i-\tilde{\mu}_i|\leq T^{\delta+\varepsilon}$, we have $B_2\asymp |t_2+r_l|$. We also truncate the $t_2$-integral at some $T^{B}$ with $B$ large enough. As $B_2> T^{1-\varepsilon}$ and $U_2> T^{\frac{1}{2}}$, we are away from all poles of the Gamma functions and can apply Stirling's formula (\ref{eq:stirling}) resulting in:
    \begin{equation}
    \label{eq:stirling_t2}
        |\Xi_1\Xi_2|^\varepsilon\int_{|t_1|\ll T^\varepsilon}\int_{\IR}\exp\Big(-\frac{\pi}{2}h^{\epsilon_1,\epsilon_2}(r,t)+iH(t,r)\Big)F(t,r)
    \end{equation}
    with $h^{\epsilon_1,\epsilon_2}$ as defined in (\ref{eq:def_expbehaviour_w6}),
    \[
        H(t,r):= \sum_{j=1}^3 t_2(\log(|t_2+r_j|)-1)-t_2(\log(|t_2+t_1|)-1)-t_2\log(|\Xi_2|)
    \]
    and $F$ is a smooth function with support in $t_2\asymp U_2$ and $B_2\asymp |t_2+r_l|\leq  |t_2+r_j|+T^{\delta+\varepsilon}$ for $j\neq l$ that satisfies
    \[
        \frac{\partial^n}{\partial t_2^n}F(t,r)\ll T^{-3/2+\varepsilon}(T+|U_2|)^{-1}|U_2|^{1/2}B_2^{-1/2}E_2^{-n}
    \]
    for all $n\geq 0$, where $E_2:=\min(|U_2|,B_2)$.
    
    As $F$ is only supported away from the kink points of $h^{\epsilon_1,\epsilon_2}$ we either have $h^{\epsilon_1,\epsilon_2}(r,t)=0$ or $h^{\epsilon_1,\epsilon_2}(r,t)\geq T^\varepsilon$ making the whole case negligible. We can thus assume from now on that $h^{\epsilon_1,\epsilon_2}(r,t)=0$. To apply Lemma $\ref{lemma:intbyparts}$ we need to understand the derivatives of $H$. We have
    \[
        \frac{\partial^n}{\partial t_2^n}H(t,r)\ll E_2^{1-n}
    \]
    for $n\geq 2$ and 
    \begin{equation*}
        \frac{\partial}{\partial t_2}H(t,r)=\log\Big(\frac{\prod_i |t_2+r_i|}{|t_2+t_1||\Xi_2|}\Big)\ .
    \end{equation*}
    
    We now further sum over the size of the first derivative in dyadic ranges $B_5\asymp| \frac{\partial}{\partial t_2}H(t,r)|$. If $B_5\gg E_2^{-1/2}T^\varepsilon$, we can apply Lemma \ref{lemma:intbyparts} with $Y=Q=U=E_2$ and the contribution is negligible.
    
    We are left with the cases $B_5\ll E_2^{-1/2}T^\varepsilon$. Let $A_l:=A_l(U_2,B_2,B_5)$ be the region where $t_2\asymp U_2$, $B_2\asymp |t_2+r_l|\leq_\varepsilon |t_2+r_j|$, and 
    \begin{equation*}
        \left|\log\left|\frac{\prod_i (t_2+r_i)}{(t_2+t_1)\Xi_2}\right|\right|\ll  E_2^{-1/2}T^\varepsilon.
    \end{equation*}
    This is equivalent to
    \begin{equation}
        \label{eq:bound_derivate1}
        \left|\frac{\prod_i (t_2+r_i)}{(t_2+t_1)\Xi_2}-\alpha_2\right|\ll  E_2^{-1/2}T^\varepsilon
    \end{equation}
    for some $\alpha_2\in\{\pm1\}$. Let $A_l^{\alpha_2}$ be the region corresponding to a specific choice for $\alpha_2$. We want to bound $\mathrm{meas}(A_l^{\alpha_2})$. We can transform (\ref{eq:bound_derivate1}) into
    \begin{align*}
        |\prod_i (t_2+r_i) - (t_2+t_1)\alpha_2\Xi_2|&\ll |\Xi_2U_2E_2^{-1/2}T^\varepsilon|\\
        \Leftrightarrow \prod_{i=1}^3 |t_i-q_i|&\ll |\Xi_2U_2E_2^{-1/2}T^\varepsilon|\ ,
    \end{align*}
    where the $q_i$ are complex numbers independent of $t_2$. Then
    \[
        \min_i|t_2-q_i|\ll |\Xi_2U_2E_2^{-1/2}T^\varepsilon|^{1/3}
    \]
    and thus $\mathrm{meas}(A_l^{\alpha_2})\ll |\Xi_2U_2E_2^{-1/2}T^\varepsilon|^{1/3}$.
    
    Coming back to (\ref{eq:stirling_t2}) and estimating the integrant trivially we have
    \begin{align*}
        &|\Xi_1\Xi_2|^\varepsilon\int_{|t_1|\ll T^\varepsilon}\int_{\IR}\exp\Big(-\frac{\pi}{2}h^{\epsilon_1,\epsilon_2}(r,t)+iH(t,r)\Big)F(t,r)\\ \ll &\max_{|U_2|>T^{1/2},B_2>T^{1-\varepsilon}} T^{-3/2+\varepsilon}(T+|U_2|)^{-1}|U_2|^{1/2}B_2^{-1/2}|\Xi_2U_2E_2^{-1/2}T^\varepsilon|^{1/3}\ .
    \end{align*}
    The maximum is attained for $|U_2|\asymp T$, $B_2\asymp T^{1-\varepsilon}$ and is 
    \[
        T^{-\frac{5}{2}+\frac{1}{6}+\varepsilon}|\Xi_2|^{\frac{1}{3}}\ .
    \]
    Together with the $\mu$ integral and the bound on $|\Xi_2|$ we have the total bound $T^{\frac{3}{2}-\frac{1}{6}+\frac{1}{360}+2\delta+\varepsilon}$, which was claimed.\qed
    
    \item Plugging in the definition of $\Phi_{w_6}$ we have
    \[
    \int_\IR y^{-s}w(y)\Phi_{w_6}(y^2\Xi_1,\Xi_2)e(y U_1)\mathrm{d}y
        =\ \int_{\Re(\mu)=0}h(\mu)\mathrm{spec}(\mu)\int_\IR y^{-s}w(y)K_{w_6}^{\epsilon_1,\epsilon_2}(y^2\Xi_1,\Xi_2,\mu)e(y U_1)\mathrm{d}y\mathrm{d}\mu\ .
    \]
    Start by looking at the inner integral
    \begin{align*}
        &\int_\IR y^{-s}w(y)K_{w_6}^{\epsilon_1,\epsilon_2}(y^2\Xi_1,\Xi_2,\mu)e(y U_1)\mathrm{d}y\ .
    \end{align*}
    Plugging in the definition of the integral kernel, truncating the $s_1$ and $s_2$ integrals at $T^B$ and shifting them to real part $\varepsilon$ leads to
    \begin{align*}
        &\int_\IR\int_{|t_1|\leq T^B}\int_{|t_2|\leq T^B}G(\varepsilon+it,\mu)S^{\epsilon_1,\epsilon_2}(\varepsilon+it,\mu)|4\pi^2\Xi_1|^{-\varepsilon-it_1}|4\pi^2\Xi_2|^{-\varepsilon-it_2}\\&\cdot y^{-2\varepsilon-\Re(s)-2it_1-i\Im(s)}w(y_1)e(y_1U_1)\frac{\mathrm{d}t_1\mathrm{d}t_2}{(2\pi i)^2}\mathrm{d}y_1\ .
    \end{align*}
    We apply Lemma \ref{lemma:intbyparts_asymptotic} with $y^{-2\varepsilon-\Re(s)}w(y_1)$ as the fixed c.s.\ smooth function, $t=2t_1+\Im(s)$ and $x=U_1$. Even though the smooth c.s.\ function depends on $s$, all resulting dependencies will be polynomial in $s$. Discarding the negligible error terms leaves us with
    \begin{align*}
        |U_1|^{1/2}\int_\IR\int_{|t_1|\leq T^B}\int_{|t_2|\leq T^B}G(\varepsilon+it,\mu)S^{\epsilon_1,\epsilon_2}(\varepsilon+it,\mu)|4\pi^2\Xi_1|^{-\varepsilon-it_1}|4\pi^2\Xi_2|^{-\varepsilon-it_2}\\
        \cdot \Big|\frac{2t_1+\Im(s)}{2\pi e U_1}\Big|^{-i(2t_1+\Im(s))}\tilde{w}_{U_1}\Big(\frac{2t_1+\Im(s)}{U_1}\Big)\frac{\mathrm{d}t_1\mathrm{d}t_2}{(2\pi i)^2}\ .
    \end{align*}
    We further replace the $t_2$ contour by
    \[
        \{x\pm iT^{B}:-C\leq x\leq \varepsilon\}\cup\{-C+ix:T^B\geq|x|\geq T^{1+b}\}\cup\{x\pm iT^{1+b}:-C\leq x\leq \varepsilon\}\cup\{\varepsilon+ix:|x|\leq T^{1+b}\}\ ,
    \]
    for some large $C>0$ and $b=\frac{1}{60}$. The first part is negligible if we choose $B$ big enough. To bound the second part we distinguish two cases:
    
    \textbf{Case 1: $|U_1|\geq T^{1+b}$.} We shift the $t_1$ contour to $C$ and estimate the integrant trivially by
    \[
        |\Xi_1^{-1}\Xi_2|^C|U_1t_2^{-1}|^{3C}T^{O(1)}\leq T^{-\frac{1}{2}Cb+O(1)}(T^2t_2^{-2})^C\ ,
    \]
    where we used $|\Xi_2\Xi_1^{-1}U_1^3|\leq T^{3+b/2}$. As $|t_2|\geq T^{1+b}$, the integral is negligible for large enough $C$.
    
    \textbf{Case 2: $|U_1|\leq T^{1+b}$.} The integrant can be bounded by
     \[
        |t_2|^{-2C}|\Xi_2|^CT^{O(1)}
     \]
     and the integral is negligible because $|\Xi_2|\leq T^{2+b/2}<T^{2+2b}\leq |t_2|^2$.
     
     To bound the contribution of $\{x\pm iT^{1+b}:-C\leq x\leq \varepsilon\}$ we also distinguish two cases:
     
     \textbf{Case 1: $|U_1|\geq T^{1+b}$.} We shift the $t_1$ contour to $x$ and estimate everything trivially by
     \[
        |U_1|^{-1/2}T^{-\frac{3}{2}(1+b)}\int_{-C}^\varepsilon (T^{3+3b}\Xi_1\Xi_2^{-1}U_1^{-3})^x\mathrm{d}x\leq T^{-2}\ ,
     \]
     where we used $|\Xi_2\Xi_1^{-1}U_1^3|\leq T^{3+b/2}$.
     Combining this with the $T^{3+2\delta+\varepsilon}$ from the $\mu$ integral results in a bound lower than the claimed one.
     
     \textbf{Case 2: $|U_1|\leq T^{1+b}$.} We again just estimate trivially and use $|\Xi_2|\leq T^{2+b/2}$ to bound everything including the $\mu$ integral by $T^{1}$.
     
     The remaining $t_2$ contour $\{\varepsilon+ix:|x|\leq T^{1+b}\}$ is split into dyadic ranges $|U_2|\leq T^{1+b}$. We further restrict to $t_1\asymp U_1$ making a negligible error by the rapid decay of $\tilde{w}_{U_1}$.
     
     We want to utilize the oscillation of the $t_1$ and the $t_2$ integral. Because this heavily depends on the sizes of the arguments of the Gamma function, we define $\tilde{r}_i:=\Im(\tilde{\mu}_i)$ and let $j$ and $k$ be such that 
     \[
        |t_1-\tilde{r}_j|=\min_i |t_1-\tilde{r}_i|\textnormal{ and }|t_2+\tilde{r}_k|=\min_i |t_2+\tilde{r}_i|\ .
     \]
     We now also sum over $j$ and $k$ and in dyadic ranges over $B_1\asymp |t_1-\tilde{r}_j|$, $B_2\asymp |t_2+\tilde{r}_k|$, and $B_3\asymp|t_1+t_2|$.
     
     In the generic case, all these variables will be roughly of size $T$. Before dealing with this case, we focus on the $B_i$ being small.
     
     \textbf{Some edge cases:} Assume $j\neq k$ and $B_1\leq T^{1-\varepsilon}$. In this case, we shift the $\mu_j$-contour to $C>0$ and $\mu_i$ to $-C$ for $i\neq j,k$ not picking up any poles as we have truncated the $\mu$ integral. We estimate the whole integral by
     \[
        B_1^{C}T^{-C}T^{O(1)},
     \]
     which is negligible for large enough $C$. For the same reason the contribution of $B_2\leq T^{1-\varepsilon}$ is negligible.
     
     If $j=k$ and $B_1\leq T^{1-\varepsilon}$ and also $B_2\geq B_1T^{\varepsilon}$, we do the same thing and estimate everything by
     \[
        \Big(\frac{B_1}{B_2}\Big)^{C}T^{O(1)},
     \]
     which is negligible. Again, the same holds for $B_1\geq B_2T^{\varepsilon}\leq T$.
     
     If $B_3\leq T^{2/3-\beta+\varepsilon}$, we just estimate trivially by
     \[
        (B_1B_2B_3^{-1})^{-1/2}T^{-2}B_1B_3|U_1|^{-1/2}\leq T^{-\frac{3}{2}(1+\beta)+\varepsilon}\ ,
     \]
     where we used that $|U_1|\ll B_3+|U_2|$.
     For $\beta=b$ this achieves a better bound than the claimed one after including the $\mu$ integral.
     
     Because $j=k$ implies $B_3\ll B_1+B_2$ and $j\neq k$ implies $B_3\gg T$, we are left with the following cases
     \begin{itemize}
         \item $j\neq k$, $B_1,B_2\geq T^{1-\varepsilon}$, and $B_3\gg T$.
         \item $j= k$, $B_1,B_2\geq B_3T^{-\varepsilon}$, $B_1\asymp_\varepsilon B_2$, and $B_3\gg T^{2/3-b}$.
         \item $j= k$, $B_1,B_2\geq T^{1-\varepsilon}$, and $B_3\gg T^{2/3-b}$.
     \end{itemize}
     
     Assume that we are in one of these remaining cases. We define $r_i:=\Im(\mu_i)$. As $B_i\geq T^{1/2}$, we have $|t_1-r_j|\asymp B_1$ and $|t_2+r_k|\asymp B_2$. 
     
     The arguments of the Gamma functions are at least $T^{1/2}$ away from poles and thus we can apply Stirling (\ref{eq:stirling}) up to a negligible error and get
     
     \[
        |U_1|^{-1/2}|\Xi_1\Xi_2|^{-\varepsilon}\int_{\IR^2}\exp{\Big(-\frac{\pi}{2}h^{\epsilon_1,\epsilon_2}(r,t)+ig(t,r)\Big)}F(t,r)\mathrm{d}t\ ,
     \]
     where $h^{\epsilon_1,\epsilon_2}(r,t)$ was defined in (\ref{eq:def_expbehaviour_w6}),
     \begin{align*}
         g(t,r)=& -(t_1+t_2)\log \frac{|t_1+t_2|}{e}\\&+ \sum_i (t_1-r_i)\log \frac{|t_1-r_i|}{e}+(t_2+r_i)\log \frac{|t_2+r_i|}{e}\\
         &-t_1\log |4\pi^2\Xi_1|-t_2\log |4\pi^2\Xi_2|\\
         &-(2t_1+\Im(s))\log \frac{|2t_1+\Im(s)|}{2\pi e |U_1|}
     \end{align*}
     and $F$ is a smooth function with support in $t_i\asymp U_i$ that satisfies
     \[
        \frac{\partial^n}{\partial t_1^n}\frac{\partial^m}{\partial t_2^m}F(t,r)\ll_{m,n}T^\varepsilon\frac{1}{(T+|U_1|)(T+|U_2|)}\Big(\frac{B_3}{B_1B_2}\Big)^{1/2}\frac{1}{E_1^nE_2^m},
     \]
     for $m,n\in\IN$, where
     \[
        E_i:=\min(B_i,B_3,|U_i|)\ .
     \]
     Recall that the exponential growth $h^{\epsilon_1,\epsilon_2}(t,r)$ was a piecewise linear non negative function. As we are over $T^{1/2}$ away from its kink points, it is either $0$ or the integral is negligible. We can thus assume that $h(t,r)=0$ and it is enough to analyze
     \begin{equation*}
         J:=J(B_1,B_2,B_3,U_1,U_2):=\int_{\IR^2}\exp{(ig(t,r))}F(t,r)\mathrm{d}t
     \end{equation*}
     for all the choices of $B_i$ and $U_j$. To apply Lemma \ref{lemma:intbyparts} we need to calculate and bound the derivatives of $g$. We have
     \begin{align*}
         g_1(t):=&\frac{\partial}{\partial t_1}g(t,r)=\log \Big |\frac{\prod_i (t_1-r_i)}{(t_1+t_2)Y_1(2t_1+\Im(s))^2}\Big |\\
         g_2(t):=&\frac{\partial}{\partial t_2}g(t,r)=\log \Big |\frac{\prod_i (t_2+r_i)}{(t_1+t_2)Y_2}\Big |\\
         \frac{\partial^m\partial^n}{\partial t_j^m\partial t_i^n}g_i&\asymp B_3^{-m-n}\textnormal{ for } m\geq 1, n\geq 0, i\neq j\\
         \frac{\partial^n}{\partial t_i^n}g_i&\ll E_i^{-n}\textnormal{ for }n\geq 0\ ,
     \end{align*}
     where $Y_1:=\frac{\Xi_1}{U_1^2}$ and $Y_2:=4\pi^2\Xi_2$.
     
     We now localize the size of $|g_1|$ respectively $|g_2|$ as $B_4$ respectively $B_5$. Applying Lemma \ref{lemma:intbyparts} with $Y=Q=E_i$, $U=E_i\min(1, B_{3+i})$ and $R=B_{3+i}$ implies that the integral is negligible except if
     \[
        B_{3+i}\ll T^\varepsilon E_i^{-1/2}
     \]
     for $i\in \{1,2\}$.
     
     If this is the case, define $A_{j,k}:=A_{j,k}(U_1,U_2,B_1,B_2,B_3,r_1,r_2,r_3)$ to be the set of $(t_1,t_2)$ such that  
     \[
        |g_i(t)|\ll T^\varepsilon E_i^{-1/2}, t_i\asymp U_i, |t_1-r_j|\asymp B_1\leq |t_1-r_l|,|t_2+r_k|\asymp B_2\leq |t_2+r_l|\textnormal{, and} |t_1+t_2|\asymp B_3\ .
     \]
     We can bound $J$ by
     \begin{equation}
         \label{eq:bound_J_general}
         T^\varepsilon\frac{1}{(T+|U_1|)(T+|U_2|)}\Big(\frac{B_3}{B_1B_2}\Big)^{1/2}\mathrm{meas}(A_{j,k})+O(T_B^{-B})\ .
     \end{equation}
     
     \textbf{Bounding $\mathrm{meas}(A_{j,k})$:} We have
     \begin{align}
     \label{eq:bound_g1}
         |g_1(t)|&\ll T^\varepsilon E_1^{-1/2}\Rightarrow \left|\frac{\prod_i (t_1-r_i)}{(t_1+t_2)Y_1(2t_1+\Im(s))^2}-\alpha_1\right |\ll T^\varepsilon E_1^{-1/2}\textnormal{ and}\\
     \label{eq:bound_g2}
         |g_2(t)|&\ll T^\varepsilon E_2^{-1/2}\Rightarrow \left|\frac{\prod_i (t_2+r_i)}{(t_1+t_2)Y_2}-\alpha_2\right |\ll T^\varepsilon E_2^{-1/2}
     \end{align}
     for $\alpha_i\in \{\pm1\}$. Fix such a choice for the $\alpha_i$ and let $A_{j,k}^{\alpha_1,\alpha_2}$ be the corresponding region. As in \cite{BB20}*{Section 15.4}, this set has a finite number of connected components by \cite{Mi64}*{Theorem 3} and this number is bounded in terms of the degrees of the above polynomials and thus bounded independently of all the variables. For $t\in A_{j,k}^{\alpha_1,\alpha_2}$ let $A_{j,k}^{\alpha_1,\alpha_2}(t)$ be the connected component containing $t$. If we can prove that $(t_1\pm u_1,t_2)\notin A_{j,k}^{\alpha_1,\alpha_2}(t)$ for some $u_1$ independent of $t$, we can bound $\mathrm{meas}(A_{j,k}^{\alpha_1,\alpha_2}(t))$ by $|u_1|E_2$. Similarly $(t_1,t_2\pm u_2)\notin A_{j,k}^{\alpha_1,\alpha_2}(t)$ for some $u_2$ independent of $t$ gives the bound $|u_2|E_1$. 
     
     If $(t_1+x,t_2)\in A_{j,k}^{\alpha_1,\alpha_2}(t_1,t_2)$, then
     \begin{align*}
         &T^{\varepsilon}E_1^{-1/2}\gg g_1(t_1+x,t_2)-g_1(t_1,t_2)= x\frac{\partial}{\partial t_1} g_1(t_1,t_2)+O(\frac{x^2}{E_1^2})\\ \textnormal{and }
         &T^{\varepsilon}E_2^{-1/2}\gg g_2(t_1+x,t_2)-g_2(t_1,t_2)= x\frac{\partial}{\partial t_1} g_2(t_1,t_2)+O(\frac{x^2}{B_3})\ .
     \end{align*}
     
     Thus, for $u_1:=T^{2\varepsilon}B_3E_2^{-1/2}$ we have $(t_1\pm u_1,t_2)\notin A_{j,k}^{\alpha_1,\alpha_2}(t_1,t_2)$, which implies
     \begin{equation*}
         \mathrm{meas}(A_{j,k}^{\alpha_1,\alpha_2}(t_1,t_2))\ll T^{2\varepsilon}B_3E_1^{1/2}
     \end{equation*}
     and analogous
     \begin{equation*}
         \mathrm{meas}(A_{j,k}^{\alpha_1,\alpha_2}(t_1,t_2))\ll T^{2\varepsilon}B_3E_2^{1/2}\ .
     \end{equation*}
     Plugging this into $(\ref{eq:bound_J_general})$ gives
     \begin{equation}
         \label{eq:bound_J} J\ll T^\varepsilon\frac{1}{(T+|U_1|)(T+|U_2|)}\Big(\frac{B_3}{B_1B_2}\Big)^{1/2}B_3\min(E_1,E_2)^{1/2}\ .
     \end{equation}
     
     This bound is enough to exclude the remaining non-generic cases:
     
     \textbf{Case 1: $|U_2|\leq T^{1-b}$.} In this case, $B_2\asymp T$, $B_1\geq T^{1-\varepsilon}$, and $B_3\leq B_1T^\varepsilon$. We can use this and $(\ref{eq:bound_J})$ to bound $|U_1|^{-1/2}J$ by
     \[
        |U_1|^{-1/2}T^{\varepsilon}\frac{1}{T+|U_1|}T^{-1}\Big(\frac{U_1}{B_1T}\Big)^{1/2}|U_1||U_2|^{1/2}\ll T^{-3/2-b/4+\varepsilon}\ .
     \]
     Together with the $\mu$ integral this is below the claimed bound.
     
     \textbf{Case 2: $|U_1|\geq T^{1+b}$ and $|U_2|\geq T^{1-b}$.} In this case, $B_1\asymp |U_1|$, $B_2\geq T^{1+\varepsilon}$ and $B_3\leq |U_1|$. We can use this and $(\ref{eq:bound_J})$ to bound $|U_1|^{-1/2}J$ by
     \[
        |U_1|^{-1/2}T^{\varepsilon}|U_1|^{-1}\frac{1}{T+|U_2|}\Big(\frac{1}{B_2}\Big)^{1/2}|U_1|\min(T,|U_2|)^{1/2}\ll T^{-3/2-b/4+\varepsilon}\ .
     \]
     Together with the $\mu$ integral this is below the claimed bound.
     
     \textbf{Case 3: $B_3\leq T^{1-b}$, $|U_1|\leq T^{1+b}$ and $|U_2|\geq T^{1-b}$.} In this case, $B_3\leq T^\varepsilon\min(B_1,B_2)$. We can use this and $(\ref{eq:bound_J})$ to bound $|U_1|^{-1/2}J$ by
     \[
        |U_1|^{-1/2}T^\varepsilon\frac{1}{(T+|U_1|)(T+|U_2|)}B_3^{3/2}(B_1B_2)^{-1/2}\min(B_3,|U_1|,|U_2|)^{1/2}\ .
     \]
     As only $T^{1-b}\ll |U_1|,|U_2|\ll T^{1+b}$ remain, this is maximal for $B_1\asymp B_2\asymp B_3$ and $|U_1|\asymp|U_2|\asymp T$. In which case, we can bound it by $T^{-3/2-b}$ and together with the $\mu$ integral this is below the claimed bound.
     
     \textbf{The nearly generic case:} We are left with the case
     \[
        T^{1-b}\ll |U_1|,|U_2|,B_1,B_2,B_3\ll T^{1+b}\ .
     \]
     In this case, one dimensional integration by parts or stationary phase is not enough. We take a closer look at the condition (\ref{eq:bound_g1}). Rearranging it to isolate $t_2$ gives
     \[
        t_2=\alpha_1\frac{\prod_i(t_1-r_i)-\alpha_1t_1Y_1(2t_1+\Im(s))^2}{Y_1(2t_1+\Im(s))^2}+O(T^\varepsilon E_1^{-1/2}B_3)\ .
     \]
     Using $|\Im(s)|\leq T^{\varepsilon}$ and multiplying out this gives
     \begin{equation}
         \label{eq:formula_t2}
         t_2=(\frac{\alpha_1}{4Y_1}-1)t_1-\frac{1}{2}(r_1^2+r_2^2+r_3^2)t_1^{-1}-r_1r_2r_3t_1^{-2}+O(T^\varepsilon E_1^{-1/2}B_3)\ .
     \end{equation}
     Rearranging the condition (\ref{eq:bound_g2}) leads to
     \[
        \prod_i (t_2+r_i)-\alpha_2(t_1+t_2)Y_2\ll T^\varepsilon E_2^{-1/2}B_3Y_2.
     \]
     Plugging in $(\ref{eq:formula_t2})$, multiplying with $t_1^6$ and taking care of the error terms gives
     \[
        \sum_{i=0}^9 a_it^i \ll T^{\frac{17}{2}+\frac{17}{2}b+\varepsilon}
     \]
     with complex numbers $a_i$ independent of $t_1$. We have
     \begin{align*}
         a_9&= (\frac{\alpha_1}{4Y_1}-1)^3\\
         a_8&=0\\
         a_7&= -(\alpha_1\alpha_2\frac{Y_2}{4Y_1}+\frac{1}{2}(\frac{\alpha_1}{4Y_1}-1)\sum_i r_i^2+\frac{1}{2}(\frac{\alpha_1}{4Y_1}-1)^2\sum_i r_i^2)\ .
     \end{align*}
     We distinguish two cases: If $|\frac{\alpha_1}{4Y_1}-1|\gg T^{-a}$ for $a=12b$, then
     \[
        \prod_i^9|t_1-q_i|\ll \frac{T^{\frac{17}{2}+\frac{17}{2}b+\varepsilon}}{|a_9|}\ll T^{\frac{17}{2}+\frac{17}{2}b-3a+\varepsilon}
     \]
     for some complex numbers $q_i$ independent of $t_1$. This implies
     \[
        \min_i |t_1-q_i|\ll T^{\frac{17}{18}+\frac{17}{18}b-\frac{1}{3}a+\varepsilon}
     \]
     and thus using (\ref{eq:formula_t2}) we can bound $\mathrm{meas}(A_{j,k}^{\alpha_1,\alpha_2}(t_1,t_2))\ll T^{\frac{13}{9}+\frac{22}{9}b-\frac{1}{3}a+\varepsilon}$.
     
     If $|\frac{\alpha_1}{4Y_1}-1|\ll T^{-a}$, then $|a_7|\gg T^{2-2b}$ and we have
     \[
        \prod_i^7|t_1-q^\prime_i|\ll \frac{T^{\frac{17}{2}+\frac{17}{2}b+\varepsilon}}{|a_7|}+\frac{|a_9|T^9}{|a_7|}\ll T^{\frac{13}{2}+\frac{21}{2}b+\varepsilon}+T^{7-3a+b}
     \]
     for some complex numbers $q^\prime_i$ independent of $t_1$. As above we can argue that
     \[
        \mathrm{meas}(A_{j,k}^{\alpha_1,\alpha_2}(t_1,t_2))\ll T^{\frac{10}{7}+3b+\varepsilon}+T^{\frac{3}{2}-\frac{3}{7}a+\frac{23}{14}b+\varepsilon}\ .
     \]
     Plugging in the value for $b$ into all these possible bounds and combining it with (\ref{eq:bound_J_general}) and the missing factor $|U_1|^{1/2}$ and the $\mu$ integral we get the claimed bound or a smaller one in all cases.
\end{enumerate}

\end{proof}

\section{Gau{\ss} sums}
\label{sec:gauss}
When working with the Kuznetsov formula, one often has to bound some Fourier transform of the relevant Kloosterman sums. In the case of the symmetric square, this leads to some form of Gau{\ss} sums, as for example in \cite{BC25}*{Lemma 17}, and \cite{Ve02}*{2.3.3 Analysis of the local sum}. These sums exhibit essentially square root cancellation for square free moduli, but the story becomes more delicate in the case of powerful moduli, and we need the following lemma.

\begin{lemma}
\label{lemma:exp_sum_bound}
Let $D\in\IZ^+$  and let $b,c\in\IZ$ . The following bound holds
\begin{align*}
    \sum_{k\mod D}e\Big(\frac{bk}{D}\Big)S(k^2,c;D)\ll sD^{1+\varepsilon}\ ,
\end{align*}
where $D=s^2f$ with $f$ squarefree. 
\end{lemma}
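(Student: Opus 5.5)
Writing $S(m,n;D)$ for the classical Kloosterman sum, the plan is to open it up and evaluate the $k$-sum as a quadratic Gau\ss{} sum. Since
\[
\sum_{k\mod D}e\Big(\frac{bk}{D}\Big)S(k^2,c;D)=\sum_{\substack{x\mod D\\(x,D)=1}}e\Big(\frac{c\overline{x}}{D}\Big)\sum_{k\mod D}e\Big(\frac{xk^2+bk}{D}\Big),
\]
the inner sum is a quadratic Gau\ss{} sum $G(x,b;D)$ with $(x,D)=1$. First reduce to prime powers: by the Chinese remainder theorem both the Kloosterman sum and the Gau\ss{} sum factor (twisted multiplicativity, with $x,b,c$ multiplied by units), so the whole expression is multiplicative in $D$ up to replacing $b,c$ by unit multiples. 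The target bound $sD^{1+\varepsilon}$ is multiplicative as well, with the $D^\varepsilon$ absorbing a factor $C^{\omega(D)}$. It therefore suffices to show that for $D=p^n$ the sum is $\ll p^{n+\lfloor n/2\rfloor}$ times an absolute constant, i.e.\ $p^{3/2}\cdot$const for $n=1$ and $p^{3n/2}$ for $n$ even.

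For odd $p$, completing the square gives $G(x,b;p^n)=e(-\overline{4x}b^2/p^n)\,\varepsilon_{p^n}\big(\tfrac{x}{p}\big)^n p^{n/2}$. The full sum then becomes
\[
\varepsilon_{p^n}p^{n/2}\sum_{x\mod p^n}{}^{\!*}\Big(\frac{x}{p}\Big)^ne\Big(\frac{(c-\overline{4}b^2)\overline{x}}{p^n}\Big),
\]
that is, $p^{n/2}$ times a Ramanujan sum or a Gau\ss{} sum with the quadratic character. For $n$ even this is a Ramanujan sum $c_{p^n}(m)$, bounded by $p^n$, so the total is at most $p^{3n/2}=sD$. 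For $n$ odd it is a Gau\ss{} sum attached to the primitive character $(\cdot/p)$ of conductor $p$ inside modulus $p^n$. Such a sum vanishes unless $p^{n-1}\mid m$, and then has size $p^{n-1}\cdot p^{1/2}$, giving a total of at most $p^{n/2}p^{n-1/2}=p^{n-1/2}\cdot p^{\,\,}$, i.e.\ $p^{(3n-1)/2}\cdot p^{1/2}\ll p^{n+(n-1)/2}\sqrt p$. This matches $s\cdot D$ with $s=p^{(n-1)/2}$ up to the factor $\sqrt p$, which must be checked carefully. The worry is whether the bound is only $sD\sqrt{f}$; I expect a direct computation of $|\,\text{Gau\ss{} sum}|\le p^{n-1}\sqrt p$ to resolve the exponent bookkeeping.

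The case $p=2$ contributes only a bounded factor to the constant. There one bounds trivially using that $G(x,b;2^n)\ll 2^{n/2}$ and splits $x$ by the classes that give nonvanishing, leaving an $O(1)$ loss at the prime $2$.

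The main obstacle is getting the exponent bookkeeping exactly right in the odd-power case $n$ odd, where the character $(x/p)^n$ is nontrivial. In particular, one must confirm that the resulting Gau\ss{} sum of imprimitive modulus $p^n$ has modulus at most $p^{n-1/2}$. Combined with the factor $p^{n/2}$, this gives $p^{n}\cdot p^{(n-1)/2}\cdot\sqrt p/\sqrt p$ after normalizing. If this bookkeeping fails, I would fall back to the alternative of applying Weil's bound only for $n=1$ and the explicit stationary-phase evaluation of $p$-adic Kloosterman sums for $n\ge2$.
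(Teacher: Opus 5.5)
Your argument is correct and takes a different route from the paper. The paper substitutes $k\mapsto\overline{x}k$ to rewrite the sum as $\sum_{k \bmod D}c_D(k^2+bk+c)$. Prime by prime, it then uses the explicit values of $c_{p^\lambda}$ and counts the $k \bmod p^\lambda$ with $v_p(k^2+bk+c)\ge\lambda$ or $=\lambda-1$ via Hensel's lemma, splitting into cases by $v_p(\Delta)$, $\Delta=b^2-4c$, and its parity.

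You instead evaluate the $k$-sum first as a quadratic Gau{\ss} sum. This collapses the local factor to $\varepsilon_{p^n}p^{n/2}\sum_{x \bmod p^n,\,(x,p)=1}\big(\frac{x}{p}\big)^n e(mx/p^n)$ with $m\equiv-\overline{4}\Delta$. For $n$ even this is a Ramanujan sum, so the trivial bound already gives $p^{3n/2}=sD$. For $n$ odd it is a Gau{\ss} sum of the Legendre symbol, which vanishes unless $p^{n-1}\mid m$ and otherwise has modulus at most $p^{n-1}\sqrt p$.

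Your route is shorter and yields an exact formula that makes the role of the discriminant transparent. The paper's route is pure root counting, at the price of a case analysis. At $p=2$ your crude bound works with no loss at all: $|G(x,b;2^n)|\le 2^{(n+1)/2}$, so the local factor is at most $\varphi(2^n)2^{(n+1)/2}=2^{(3n-1)/2}\le sD$.

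Your exponent bookkeeping contains two slips, which you should correct rather than leave as open worries.

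\begin{itemize}
\item For $n$ odd one has $s=p^{(n-1)/2}$, so the target is $sD=p^{(3n-1)/2}$. Since $p^{n/2}\cdot p^{n-1/2}=p^{(3n-1)/2}$ exactly, there is no extra factor $\sqrt p$; the step where you write $p^{(3n-1)/2}\cdot p^{1/2}$ is an arithmetic error.
\item The required local bound for $n=1$ is $p$, not $p^{3/2}$. A bound $p^{3/2}$ at primes exactly dividing $D$ would only give $D^{3/2}$ for squarefree $D$, and so would not prove the lemma. Your Gau{\ss} sum computation gives exactly $p^{1/2}\cdot p^{1/2}=p$ there, so no fallback to Weil's bound or $p$-adic stationary phase is needed.
\end{itemize}
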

\begin{proof}
We have
\begin{align*}
    \sum_{\substack{k\mod D}}e\Big(\frac{bk}{D}\Big)S(k^2,c;D)&=\sum_{\substack{x,k\mod D\\ (x,D)=1}}e\Big(\frac{xk^2+bk+c\overline{x}}{D}\Big)\\
    &=\sum_{\substack{x,k\mod D\\ (x,D)=1}}e\Big(\frac{(k^2+bk+c)\overline{x}}{D}\Big)\\
    &=\sum_{\substack{k\mod D}}c_D(k^2+bk+c)\ ,
\end{align*}
where
\[
    c_q(n):=\sum_{\substack{x\mod q\\(x,q)=1}}e(\frac{nx}{q})
\]
is the Ramanujan sum. Here, we changed variables from $k$ to $\overline{x}k$ in the second equality. Because $c_D(n)$ is multiplicative in $D$ and only depends on $n\mod D$, the whole sum can be factored over the prime factors of $D$.

Assume $D=p^\lambda$ with $p$ an odd prime.
Then, we have
\[
    c_D(n)=\begin{cases}-p^{\lambda-1} & \textnormal{ if } v_p(n)=\lambda-1\\
    p^{\lambda}-p^{\lambda-1} & \textnormal{ if } v_p(n)\geq\lambda\\
    0 & \textnormal{ else}\ .
    \end{cases}
\]
In our case $n=k^2+bk+c$. Thus, we have to count how often $v_p(k^2+bk+c)=\lambda$ and how often $v_p(k^2+bk+c)=\lambda -1$ for $k\mod p^\lambda$. If $\lambda=1$, there are at most $2$ values for $k$ such that $k^2+bk+c\equiv 0 \mod p$, and in the remaining cases $v_p(k^2+bk+c)=0$. This leads to a bound of $2p$. From now on, assume $\lambda>1$.
Let $\Delta:=b^2-4c$ be the discriminant of $k^2+bk+c$. After completing the square and a change of variables, we are interested in $v_p(k^2-\Delta)$. We distinguish three cases for $v_p(\Delta)$.

\textbf{Case 1: $v_p(\Delta)\leq \lambda-1$ and $v_p(\Delta)$ even.} In this case, there are either $2p^{v_p(\Delta)/2}$ or $0$ many solutions $\mod p^{v_p(\Delta)}$ to $k^2-\Delta\equiv 0\mod p^{v_p(\Delta)}$. Let $a$ be this number. By Hensel's Lemma, we can lift these to the same number of solutions $\mod p^{\lambda-1}$ to $k^2-\Delta\equiv 0\mod p^{\lambda -1}$. Going to $\mod p^\lambda$, each solution has a unique lift such that  $k^2-\Delta\equiv 0\mod p^{\lambda}$ and $p-1$ lifts such that  $v_p(k^2-\Delta)=\lambda-1$. In total, the sum is $a(p^\lambda-p^(\lambda-1)-(p-1)p^{\lambda-1})=0$. 

\textbf{Case 2: $v_p(\Delta)\leq \lambda-1$ and $v_p(\Delta)$ odd.} In this case, $v_p(k^2)\neq v_p(\Delta)$ and thus $v_p(k^2-\Delta)=\min(v_p(\Delta),v_p(k^2))$. This can only be $\lambda-1$ if $v_p(\Delta)=\lambda-1$, and it can never be $\lambda$. In the case of $v_p(\Delta)=\lambda-1$, we get $p^{\lambda/2}$ choices for $k$ such that  $v_p(k^2-\Delta)=\lambda-1$, and in total, the sum is bounded by $p^{\lambda/2}p^{\lambda-1}$, which is bounded by $sD$.

\textbf{Case 3: $v_p(\Delta)\geq \lambda$}. In this case, we are just interested in $v_p(k^2)$. If $\lambda$ is even, this cannot be $\lambda-1$, and we get $p^{\lambda/2}$ choices for $k$ such that  $v_p(k^2)\geq \lambda$, and the sum is bounded by $p^{\frac{3}{2}\lambda}=sD$. If $\lambda$ is odd, there are $\varphi(p^{(\lambda-1)/2})$ choices for $k$ such that  $v_p(k^2)=\lambda-1$ and $p^{(\lambda-1)/2}$choices for $k$ such that $v_p(k^2)\geq \lambda$. This leads to a bound of $p^{\lambda+(\lambda-1)/2}=sD$.

Now, let $D=2^\lambda$. The argument is almost the same as before with some small changes. If $b$ is odd, we can always start $\mod 2$ and use Hensel's Lemma to lift all solutions uniquely. The bound in this case is $D$. If $b=2B$ is even, we can complete the square and look at $k^2-(B^2-c)$. We can then again distinguish by $v_2(B^2-c)$ and get the same bounds with at most an additional factor $2$ coming from the fact that there are $4$ solutions to $x^2\equiv 1$ modulo $8$ and not $2$.

Multiplying all local bounds together and absorbing the constants into the $\varepsilon$ gives the desired bound.
\end{proof}

\section{Proof of Theorem 1}
\label{sec:proofOfThm1}
We want to compute
\[
    \sum_{\pi\in \mathcal{B}} \overline{A_\pi(m_1,m_2)}L(1/2,\pi,\mathrm{sym}^2)\cdot \frac{h(\mu_\pi)}{\CN(\pi)}\ ,
\]
where $h\in \tilde{\CH}(A_0,\tilde{\mu})$.
Applying the approximate functional equation from Theorem \ref{thm:approxfunctionalequation} gives
\begin{align*}
    &\sum_{\pi\in \mathcal{B}} \overline{A_\pi(m_1,m_2)}L(1/2,\pi,\mathrm{sym}^2)\cdot  \frac{h(\mu_\pi)}{\CN(\pi)}\\
    =& \sum_{\pi\in \mathcal{B}}\zeta(\frac{3}{2})\cdot \sum_{b,c}\Big( \frac{A_\pi(b^2,c^2)\overline{A_\pi(m_1,m_2)}}{(b^2c)^{1/2}}V_\pi(\frac{b^2c}{T^{3+\alpha}})-\frac{A_\pi(m_2,m_1)\overline{A_{\pi}(b^2,c^2)}}{(b^2c)^{1/2}}W_\pi(b^2cT^{3+\alpha})\Big)\frac{h(\mu_\pi)}{\CN(\pi)}\\
    =&\zeta(\frac{3}{2})\cdot \sum_{b,c}\frac{1}{(b^2c)^{1/2}}\Big( \sum_{\pi\in \mathcal{B}}A_\pi(b^2,c^2)\overline{A_\pi(m_1,m_2)}\frac{h(\mu_\pi)}{\CN(\pi)}V_\pi(\frac{b^2c}{T^{3+\alpha}})\\&\quad\quad-\sum_{\pi\in \mathcal{B}}A_\pi(m_2,m_1)\overline{A_{\pi}(b^2,c^2)}\frac{h(\mu_\pi)}{\CN(\pi)}W_\pi(b^2cT^{3+\alpha})\Big)\ .
\end{align*}

For $b^2c\gg T^{3+\alpha+\epsilon}$ respectively $b^2c\gg T^{3-\alpha+\epsilon}$, we can use that $A_\pi(m,n)\ll |mn|^{O(1)}$ and that $\frac{1}{\CN(\pi)}\ll \|\mu_\pi\|^{\varepsilon}$; see Lemma \ref{lemma:normfactor_bounds}, and shift the contour in $V$ respectively $W$ to the far right, resulting in a bound of $T^{-B}$ for some large $B>0$. We can thus truncate the $b,c$ sums at $b^2c\ll T^{3+\alpha+\epsilon}$ respectively $b^2c\ll T^{3-\alpha+\epsilon}$ at the cost of a negligible error.

\subsection{Adding the rest of the spectrum}
\label{sec:Symmetric_AddingSpectrum}
To apply the Kuznetsov formula, we need to add the continuous part of the spectrum. As we are in level one, there are no old forms, and we only have to add the minimal and the maximal Eisenstein series. For more details on this part of the spectrum, we refer to \cite{Bu16}.

\subsubsection{The minimal Eisenstein series}
For the minimal Eisenstein series, we need to bound
\begin{align*}
    \sum_{\substack{b,c\\b^2c\ll T^{3+ \alpha+\varepsilon}}}\frac{1}{(b^2c)^{1/2}} \sum_{j=0}^2 \int_{\Re\mu=0}\frac{h(\mu)V_\mu(b^2c T^{-(3+\alpha)})}{\prod_{1\leq i<l\leq 3}|\zeta(1+\mu_i-\mu_l)|}\overline{B_\mu^{(j)}(m_1,m_2)}B_\mu^{(j)}(b^2,c^2)\mathrm{d}\mu
\end{align*}
and 
\begin{align*}
    \sum_{\substack{b,c\\b^2c\ll T^{3- \alpha+\varepsilon}}}\frac{1}{(b^2c)^{1/2}} \sum_{j=0}^2 \int_{\Re\mu=0}\frac{h(\mu)W_\mu(b^2c T^{3+\alpha})}{\prod_{1\leq i<l\leq 3}|\zeta(1+\mu_i-\mu_l)|}\overline{B_\mu^{(j)}(b^2,c^2)}B_\mu^{(j)}(m_2,m_1)\mathrm{d}\mu\ .
\end{align*}
We can bound the Fourier coefficients $B_\mu^{(j)}(m_1,m_2)$ by $(m_1m_2)^\varepsilon$ and thus by $T^\varepsilon$ in all cases. By the rapid decay of $h$ away from $\tilde{\mu}$, we can truncate the $\mu$-integral to a ball of radius $T^{\delta+\varepsilon}$ around the $w(\tilde{\mu})$ for $w\in \mathcal{W}$. Using Lemma \ref{lemma:bound_VW} with $A$ close to $0$ and (\ref{eq:bound_h_individuel}), we can bound the $h$ and $V$ respectively $W$ terms by $T^\varepsilon$. Furthermore, we can bound the inverse $\zeta$-factors by $T^\varepsilon$ using the Prime Number Theorem.
This bounds the integral by $T^{2\delta+\varepsilon}$, and the sum over $b,c$ adds $T^{\frac{3}{2}\pm\alpha/2}$ to this, which is smaller than the claimed error term for small enough $\delta,\alpha$.

\subsubsection{The maximal Eisenstein series}
For the maximal Eisenstein series, we need to bound
\begin{align*}
    \sum_{\substack{b,c\\b^2c\ll T^{3+ \alpha+\varepsilon}}}\frac{1}{(b^2c)^{1/2}} \sum_\phi \int_{\Re\mu_1=0}\frac{h(\mu^\prime)V_{\mu^\prime}(b^2c T^{-(3+\alpha)})}{L(1,\phi,\mathrm{Ad})|L(1+3\mu_1,\phi)|^2}\overline{A_{\phi,\mu_1}(m_1,m_2)}A_{\phi,\mu_1}(b^2,c^2)\mathrm{d}\mu
\end{align*}
and
\begin{align*}
    \sum_{\substack{b,c\\b^2c\ll T^{3- \alpha+\varepsilon}}}\frac{1}{(b^2c)^{1/2}} \sum_\phi \int_{\Re\mu_1=0}\frac{h(\mu^\prime)W_{\mu^\prime}(b^2c T^{3+\alpha})}{L(1,\phi,\mathrm{Ad})|L(1+3\mu_1,\phi)|^2}\overline{A_{\phi,\mu_1}(b^2,c^2)}A_{\phi,\mu_1}(m_2,m_1)\mathrm{d}\mu\ ,
\end{align*}
where the $\phi$ sum goes over an orthogonal basis of cusp forms for $\GL_2$ of level one with spectral parameter $\mu_\phi$ and $\mu^\prime=(\mu_1+\mu_\phi,\mu_1-\mu_\phi,-2\mu_1)$.

We can bound $h$ and $V$ respectively $W$ in the same way as above. We can bound the Fourier coefficients by $A_{\phi,\mu_1}(m_1,m_2)\ll (m_1m_2)^{\epsilon}$ because there are no exceptional forms of level 1 for $\GL_2$ over $\IQ$ \cite{BS07}. We can bound the inverse of the adjoint $L$-function by $T^\varepsilon$ using \cite{GHL94}*{Main Theorem} and the inverse of the other $L$-function by $T^\varepsilon$ using \cite{Sa04}*{(4)}. Using the rapid decay away from $\tilde{\mu}$ of $h$, we can truncate the integral to $\min_{w\in W}|\mu_1-w(\tilde{\mu})_3|\leq T^{\delta+\varepsilon}$. Furthermore, we can truncate the $\phi$-sum to $|\mu_1+\mu_\phi-w(\tilde{\mu})_1|\leq T^{\delta+\varepsilon}$. Using Weyl's law \cite{Mue08}*{Theorem 4.2}, we can bound the number of $\phi$ satisfying this by $T^{1+\delta+\varepsilon}$. Bringing it all together, we get the bound $T^{\frac{5}{2}+\alpha/2+2\delta+\varepsilon}$, which is smaller than the claimed error term for small enough $\delta,\alpha$.

\begin{rem}
    If one wants to work with forms of level $N>1$, one has to account for exceptional forms. Just plugging in the Kim-Sarnak bound of $a_f(m)\ll m^{\frac{7}{64}}$ is not quite enough and leads to an error term bigger than $T^3$. In that case, we would need to also use a density theorem, as in \cite{Hu18}, to control the number of these exceptions. Alternatively, we can use the fact that we required $\tilde{\mu}$ to be away from the walls, which implies that $h(\mu^\prime)$ would be negligible for an exceptional form $\phi$.
\end{rem}

\subsection{Test functions and applying Kuznetsov}
Before applying the Kuznetsov formula to both sums with the two test functions
\[
    V_{b,c}(\mu):=h(\mu)V_\mu(\frac{b^2c}{T^{3+\alpha}}),\quad W_{b,c}(\mu):=h(\mu)W_\mu(b^2cT^{3+\alpha})
\]
we make a few observations. We have
\begin{equation}
    \label{eq:V_uint}
    {V_{b,c}}=\int_{(\epsilon)}\frac{e^{u^2}}{u}\frac{\zeta(3(\frac{1}{2}+u))}{\zeta(\frac{3}{2})}\left(\frac{b^2c}{T^{3+\alpha}}\right)^{-u}{\tilde{V}_u}\frac{\mathrm{d}u}{2\pi i}
\end{equation}
respectively
\begin{equation}
\label{eq:W_uint}
{W_{b,c}}=\int_{(\epsilon)}\frac{e^{u^2}}{u}\frac{\zeta(3(\frac{1}{2}+u))}{\zeta(\frac{3}{2})}\left(b^2cT^{3+\alpha}\right)^{-u}{\tilde{W}_u}\frac{\mathrm{d}u}{2\pi i}\ .
\end{equation}
With
\[
    \tilde{V}_u(\mu):=h(\mu)\frac{\mathcal{G}(u,\mu)}{\mathcal{G}(0,\mu)},\quad \tilde{W}_u(\mu)= h(\mu)\frac{\mathcal{G}(-u,\mu)}{\mathcal{G}(0,\mu)}\prod_{1\leq i\leq j\leq 3}\frac{\Gamma_{\IR}(\frac{1}{2}+u+\mu_i+\mu_j)}{\Gamma_{\IR}(\frac{1}{2}-u-\mu_i-\mu_j)}\ .
\]
We later want to apply the results from Section \ref{section:LemmaWeightFunctions}. To do this, we will need the following two Lemmata:
\begin{lemma}
\label{lemma:testfunctions_pre}
For $h\in\tilde{\CH}(A_0,\tilde{\mu})$, $u=\varepsilon +it$ with $|t|\leq T^\varepsilon$, we have
\[
        \tilde{V}_u(\mu)\in \CH(A_0,\tilde{\mu})\ ,\quad \tilde{W}_u(\mu)\in \CH(A_0) \,
    \]
    and $\tilde{W}_u(\mu)$ satisfies (\ref{eq:rapid_decay_awayfrommu}) and (\ref{eq:BoundSpec_h}), and $\tilde{W}_u(\mu)\exp(-h_\infty(\mu))$ satisfies (\ref{eq:bound_derivatives_h}).
\end{lemma}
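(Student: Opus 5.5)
The plan is to check the defining properties of $\CH(A_0)$ and $\CH(A_0,\tilde{\mu})$ one by one for the two functions. The first group is Weyl invariance, holomorphy on $\Lambda_{A_0}$, rapid decay, and the zeros (\ref{eq:zerosKuznetsov}). The second group is (\ref{eq:rapid_decay_awayfrommu}), (\ref{eq:bound_h_individuel}), (\ref{eq:bound_derivatives_h}) and (\ref{eq:BoundSpec_h}).

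\textbf{Holomorphy and zeros.} Weyl invariance is immediate. The factor $\mathcal{G}(\pm u,\mu)$ is symmetric in the $\mu_i$, and so is the product of $\Gamma_\IR$-quotients over $1\le i\le j\le 3$. For $\tilde{V}_u$, the factor $\mathcal{G}(u,\mu)/\mathcal{G}(0,\mu)$ is rational in $\mu$. Its denominator $\mathcal{G}(0,\mu)$ vanishes exactly on $\frac12+\mu_i+\mu_j=n$ with $n$ even and $|n|\le A_0$. These are precisely the additional zeros (\ref{eq:zerosLinf}) of $h\in\tilde{\CH}(A_0,\tilde{\mu})$, so the quotient $h/\mathcal{G}(0,\mu)$ stays holomorphic on $\Lambda_{A_0}$ (possibly after shrinking $A_0$ slightly, if multiplicities need it). The zeros (\ref{eq:zerosKuznetsov}) are inherited from $h$.

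For $\tilde{W}_u$ there are further poles, namely those of $\Gamma_\IR(\frac12+u+\mu_i+\mu_j)$, located at $\frac12+u+\mu_i+\mu_j=-2k$. These are cancelled by the factor $\frac12+u+\mu_i+\mu_j-n$ in $\mathcal{G}(-u,\mu)$, at least for all such points with real part of $\mu$ at most $A_0$ in size. Zeros of the denominator Gamma only produce zeros, which is harmless.

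\textbf{Decay and size bounds.} Rapid decay in the strip and (\ref{eq:rapid_decay_awayfrommu}) follow from the corresponding properties of $h$. We need the extra factors to grow at most polynomially in $\|\mu\|$ there, uniformly in $|t|\le T^\varepsilon$. This is clear for the rational factor. For the $\Gamma$-quotient, Stirling (\ref{eq:stirling}) gives size $\ll \|\mu\|^{O(1)}$, since the exponential parts cancel between numerator and denominator when $\Re\mu$ is bounded. Bound (\ref{eq:bound_h_individuel}) for $\tilde{V}_u$ then holds: near $w(\tilde{\mu})$, where $\mu$ is in generic position, we have $\mathcal{G}(u,\mu)/\mathcal{G}(0,\mu)=O(1)$ because $|u|\ll T^\varepsilon$ and $\Re u=\varepsilon$. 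Away from these balls, the rapid decay of $h$ takes over.

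For $\tilde{W}_u$, Remark \ref{rem:selfdualforms} gives that the $\Gamma$-quotient is $\asymp T^{6\varepsilon}$ on $\Re\mu=0$ near $\tilde{\mu}$. This yields (\ref{eq:BoundSpec_h}) with $\varepsilon$ renamed, which is all the statement asks of $\tilde{W}_u$. Bound (\ref{eq:bound_h_individuel}) is not required for $\tilde{W}_u$.

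\textbf{Derivative bounds, the main step.} The derivative bounds (\ref{eq:bound_derivatives_h}) come from Leibniz' rule combined with Lemma \ref{lemma:intpartsVW}. That lemma shows that each $\mu$-derivative of $\mathcal{G}(u,\mu)/\mathcal{G}(0,\mu)$ gains a factor $\|\mu\|^{-1}\ll T^{-1}\le T^{-\delta}$. Likewise, each derivative of the product of $\mathcal{G}(-u,\mu)/\mathcal{G}(0,\mu)$, the $\Gamma$-quotient and $\exp(-h_\infty(\mu))$ gains $\|\mu\|^{-1}$, relative to a base size of $O(T^{6\varepsilon})$. The derivatives of $h$ contribute $T^{-j\delta}$. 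Altogether $D_j\tilde{V}_u\ll T^{-j\delta}$ and $D_j(\tilde{W}_u e^{-h_\infty})\ll T^{-j\delta+O(\varepsilon)}$.

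The obstacle is that Lemma \ref{lemma:intpartsVW} only applies to $\mu$ in generic position. I would handle this by splitting $h=h\chi+h(1-\chi)$, where $\chi$ is a smooth cutoff to the $T^{\delta+\varepsilon}$-balls around the $w(\tilde{\mu})$. On these balls every $\mu$ is in generic position, since $\tilde{\mu}$ is and $T^{\delta+\varepsilon}=o(T)$, so the lemma applies. Outside the balls, $h$ and its derivatives are $O(T^{-B})$, and the polynomially growing factors are absorbed. The same argument controls the derivatives of $h$ itself: there the cutoff derivatives are of size $T^{-\delta-\varepsilon}$, and away from the balls $h$ is negligible. The $T^{O(\varepsilon)}$ losses are absorbed by the $\varepsilon$-convention.
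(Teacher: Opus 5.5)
Your proposal is correct and follows the paper's proof: the extra zeros (\ref{eq:zerosLinf}) of $h$ absorb the poles of $1/\mathcal{G}(0,\mu)$, and the zeros of $\mathcal{G}(-u,\mu)$ absorb those of the $\Gamma_\IR$-quotient. Stirling then gives polynomial growth and the bound $\|\mu\|^{6\varepsilon}$ on $\Re\mu=0$, Lemma \ref{lemma:intpartsVW} with Leibniz gives (\ref{eq:bound_derivatives_h}), and your cutoff to the $T^{\delta+\varepsilon}$-balls only makes explicit the restriction to generic position that the paper leaves implicit.

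You share two harmless imprecisions with the paper. First, the factor $1-4u^2$ makes $\mathcal{G}(u,\mu)/\mathcal{G}(0,\mu)\asymp 1+|t|^2$ rather than $O(1)$; this is polynomial in $u$ and is later absorbed by $e^{u^2}$. Second, $\exp(-h_\infty)$ is not smooth where some $\mu_i+\mu_j=0$, so outside the balls the derivative bound for $\tilde{W}_u\exp(-h_\infty)$ is not literally ``absorbed''; it is only needed, and only used, near the balls.
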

\begin{proof}
    For $\mu$ in generic position $\frac{\mathcal{G}(\pm(\varepsilon+it),\mu)}{\mathcal{G}(0,\mu)}$ is of size $1$, it only has poles where $h$ has zeros because of (\ref{eq:zerosLinf}), and it is of polynomial size in $\|\mu\|$ away from these points. The factor
\[
    \prod_{1\leq i\leq j\leq 3}\frac{\Gamma_{\IR}(\frac{1}{2}+(\varepsilon+it)+\mu_i+\mu_j)}{\Gamma_{\IR}(\frac{1}{2}-(\varepsilon+it)-\mu_i-\mu_j)}
\]
    is also of polynomial size in $\|\mu\|$, and all its poles are canceled by $\mathcal{G}(-(\varepsilon+it),\mu)$. For $\Re \mu=0$, it is bounded by $\|\mu\|^{6\varepsilon}$\ .
    
    Thus, $\tilde{V}_u(\mu)$ and $\tilde{W}_u(\mu)$ both satisfy (\ref{eq:rapid_decay_awayfrommu}), (\ref{eq:BoundSpec_h}), (\ref{eq:zerosKuznetsov}), and Lemma \ref{lemma:intpartsVW} implies that $\tilde{V}_u(\mu)$ and $\tilde{W}_u(\mu)\exp(-h_\infty(\mu))$ satisfy (\ref{eq:bound_derivatives_h}).
\end{proof}

\begin{lemma}
\label{lemma:testfunctions}
For $h\in\tilde{\CH}(A_0,\tilde{\mu})$, $b,c\in O(T^B)$ we have
\[
        V_{b,c}(\mu)\in \CH(A_0,\tilde{\mu})\ ,\quad W_{b,c}(\mu)\in \CH(A_0) \,
    \]
    and $W_{b,c}(\mu)$ satisfies (\ref{eq:rapid_decay_awayfrommu}) and (\ref{eq:BoundSpec_h}), and $W_{b,c}(\mu)\exp(-h_\infty(\mu))$ satisfies (\ref{eq:bound_derivatives_h}).
\end{lemma}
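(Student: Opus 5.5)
The plan is to deduce the statement from Lemma \ref{lemma:testfunctions_pre} via the $u$-integral representations (\ref{eq:V_uint}) and (\ref{eq:W_uint}). First I move the $u$-contour in (\ref{eq:V_uint}) and (\ref{eq:W_uint}) to $\Re u=\varepsilon$. Then I truncate it to $|\Im u|\leq T^{\varepsilon}$. Because of the factor $e^{u^2}$, the tail is bounded by $\exp(-T^{2\varepsilon})$ times a polynomial in $T$ and $|u|$. This works because $b,c\ll T^B$, so $(b^2cT^{\pm(3+\alpha)})^{-u}$ is at most $T^{O(1)}$, and $\tilde V_u,\tilde W_u$ grow only polynomially in $|u|$ and $\|\mu\|$. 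Hence the truncation error is negligible, uniformly in $\mu$ on $\Lambda_{A_0}$. On the remaining piece $(b^2cT^{\pm(3+\alpha)})^{-u}$ has modulus $\ll T^{O(\varepsilon)}$ and does not depend on $\mu$. Each required property of $V_{b,c}$, $W_{b,c}$ is linear in the test function, so it will follow from the corresponding property of $\tilde V_u$, $\tilde W_u$ supplied by Lemma \ref{lemma:testfunctions_pre}, integrated over a $u$-segment of length $T^{\varepsilon}$.

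I would check the properties one by one. \emph{Holomorphy and zeros:} $h$ is holomorphic on $\Lambda_{A_0}$ with the zeros (\ref{eq:zerosKuznetsov}). The factors $\mathcal{G}(\pm u,\mu)/\mathcal{G}(0,\mu)$ and the Gamma quotient are holomorphic in $\mu$ wherever the poles are cancelled by the zeros (\ref{eq:zerosLinf}) of $h$. By the Barnes-type argument used in Lemma \ref{lemma:bound_VW}, $V_\mu(x)$ and $W_\mu(x)$ are holomorphic in $\mu$ for $|\Re\mu|<A_0$, so $V_{b,c}$ and $W_{b,c}$ are holomorphic and keep the zeros (\ref{eq:zerosKuznetsov}). \emph{Weyl invariance:} $\mathcal{G}(u,\mu)$ and the product over $i\leq j$ are symmetric in the $\mu_i$, so invariance is inherited from $h$. \emph{Rapid decay in the strip and (\ref{eq:rapid_decay_awayfrommu}):} the extra factor is at most $T^{O(\varepsilon)}$ times a polynomial in $\|\mu\|$, and this is absorbed by the arbitrary power decay of $h$.

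\emph{Sizes (\ref{eq:bound_h_individuel}) and (\ref{eq:BoundSpec_h}):} on $\Re\mu=0$ we have $|\tilde V_u|,|\tilde W_u|\ll T^{O(\varepsilon)}$ near $w(\tilde\mu)$. Integrating over $u$ gives $O(T^{\varepsilon})$, and $\varepsilon$ can be absorbed in the statements that carry an $\varepsilon$. For the literal bound $O(1)$ for $V_{b,c}$, I would instead use $|V_\mu(x)|\ll 1$ when $x$ is not small, and otherwise shift the contour past $u=0$ and use the residue $1$; this is the argument of Lemma \ref{lemma:bound_VW} with $A$ near $0$. For $W_{b,c}$ only (\ref{eq:BoundSpec_h}) is needed, and the $T^\varepsilon$ loss is harmless there. \emph{Derivatives (\ref{eq:bound_derivatives_h}):} I differentiate under the $u$-integral. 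Lemma \ref{lemma:intpartsVW}, combined with $D_jh\ll T^{-j\delta}$ and Leibniz, gives $D_j\tilde V_u\ll T^{-j\delta+O(\varepsilon)}$ and $D_j(\tilde W_u e^{-h_\infty})\ll T^{-j\delta+O(\varepsilon)}$, since $\|\mu\|^{-1}\leq T^{-\delta}$. The same bounds then hold after integrating in $u$.

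I expect the main obstacle to be the bookkeeping of the $T^{O(\varepsilon)}$ losses: the truncation $|\Im u|\leq T^{\varepsilon}$ and $\Re u=\varepsilon$ produce such powers, whereas (\ref{eq:bound_h_individuel}) and (\ref{eq:bound_derivatives_h}) are stated without them. I would deal with this by taking $\Re u$ and the truncation level as small multiples of the ambient $\varepsilon$, and by noting that later applications of the lemmata in Section \ref{section:LemmaWeightFunctions} only ever use these bounds up to $T^\varepsilon$. Where an exact $O(1)$ is needed, I would use the contour-shift-past-zero argument for $V$ described above.
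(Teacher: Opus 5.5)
Your proposal is correct and follows the paper's proof. Both arguments truncate the $u$-integrals (\ref{eq:V_uint}) and (\ref{eq:W_uint}) at $|\Im(u)|\leq T^{\varepsilon}$ using the decay of $e^{u^2}$, then transfer the properties from Lemma \ref{lemma:testfunctions_pre} via the $T^{\varepsilon}$ bound on the $\mu$-independent scalar factor and the polynomial dependence on $\Im(u)$. Your extra bookkeeping of the $T^{O(\varepsilon)}$ losses, in particular the contour shift past $u=0$ to get a genuine $O(1)$ bound for $V_{b,c}$, is a detail the paper passes over silently.
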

\begin{proof}
Using (\ref{eq:V_uint}) respectively (\ref{eq:W_uint}) we realize that the integral for $|\Im (u)|\geq T^\varepsilon$ is Weyl group invariant, holomorphic, and in $O_B(T^{-B})$ for all $B>0$, which also holds for all derivatives in the $\mu_i$, due to the rapid decay of $e^{u^2}$. We can therefore truncate the integral at $|\Im (u)|\leq T^\varepsilon$ without affecting any of the desired properties. For the remaining integral, we use Lemma \ref{lemma:testfunctions_pre},
\[
    \frac{e^{u^2}}{u}\frac{\zeta(3(\frac{1}{2}+u))}{\zeta(\frac{3}{2})}\left(\frac{b^2c}{T^{3+\alpha}}\right)^{-u}\ll_\varepsilon T^\varepsilon,\quad \frac{e^{u^2}}{u}\frac{\zeta(3(\frac{1}{2}+u))}{\zeta(\frac{3}{2})}\left(b^2cT^{3+\alpha}\right)^{-u}\ll_\varepsilon T^\varepsilon 
\]
and that $\tilde{V}_u$ and $\tilde{W}_u$ and all their derivatives in the $\mu_i$ depend only polynomially on $\Im (u)$, to see that the properties are also preserved for this part of the integral. 
\end{proof}
We will often use Lemma \ref{lemma:testfunctions} implicitly when using the results from Section \ref{section:LemmaWeightFunctions} and not mention it every time.

After applying Kuznetsov to both sums, we are left with $8$ terms
\[
    \sum_{\pi\in \mathcal{B}} \overline{A_\pi(m_1,m_2)}L(1/2,\pi,\mathrm{sym}^2)  \frac{h(\mu_\pi)}{\CN(\pi)} = \zeta(\frac{3}{2})\big(\Delta^{(1)}-\Delta^{(2)}+\Sigma_4^{(1)}-\Sigma_4^{(2)}+\Sigma_5^{(1)}-\Sigma_5^{(2)}+\Sigma_6^{(1)}-\Sigma_6^{(2)} \big)+O(T^{3-\kappa}) \ .
\]
\subsection{The diagonal}
We have
\begin{align*}
    192\pi^5\Delta^{(1)}=& \sum_{b,c}\delta_{\substack{b^2=m_1\\c^2=m_2}} \frac{1}{(b^2c)^{1/2}}\int_{\Re\mu=0}h(\mu)V_\mu(\frac{b^2c}{T^{3+\alpha}})\mathrm{spec}(\mu)\mathrm{d}\mu\\
    =& \delta_{\substack{m_1=\square\\m_2=\square}}\frac{1}{(m_1^2m_2)^{1/4}}\int_{\Re\mu=0}h(\mu)V_\mu(\frac{m_1m_2^{1/2}}{T^{3+\alpha}})\mathrm{spec}(\mu)\mathrm{d}\mu\\
    =& \delta_{\substack{m_1=\square\\m_2=\square}}\frac{1}{(m_1^2m_2)^{1/4}}\Big(\int_{\Re\mu=0}h(\mu) \int_{(-A)}\frac{T^{(3+\alpha)u}}{(m_1^2m_2)^{u/2}}\frac{e^{u^2}}{u}\frac{\zeta(3(\frac{1}{2}+u))}{\zeta(\frac{3}{2})}\frac{\mathcal{G}(u,\mu)}{\mathcal{G}(0,\mu)}\frac{\mathrm{d}u}{2\pi i} \mathrm{spec}(\mu)\mathrm{d}\mu\\&\quad+\int_{\Re\mu=0}h(\mu)\mathrm{spec}(\mu)\mathrm{d}\mu\Big)\\
    =& \delta_{\substack{m_1=\square\\m_2=\square}}\frac{1}{(m_1^2m_2)^{1/4}}\mathrm{vol}_\spec(h) + O_A(T^{-A(3+\alpha-\frac{3}{2}\eta)})\ ,
\end{align*}
where we shifted the contour to $-A$ and picked up the pole at $u=0$.

We also have
\begin{align*}
    192\pi^5\Delta^{(2)}=& \sum_{b,c}\delta_{\substack{b^2=m_2\\c^2=m_1}} \frac{1}{(b^2c)^{1/2}}\int_{\Re\mu=0}h(\mu)W_\mu(b^2cT^{3+\alpha})\mathrm{spec}(\mu)\mathrm{d}\mu\\
    =& \delta_{\substack{m_1=\square\\m_2=\square}}\frac{1}{(m_2^2m_1)^{1/4}}\int_{\Re\mu=0}h(\mu)W_\mu((m_2^2m_1)^{1/2}T^{3+\alpha})\mathrm{spec}(\mu)\mathrm{d}\mu\\
    =&\delta_{\substack{m_1=\square\\m_2=\square}}\frac{1}{(m_2^2m_1)^{1/4}}\Big(\int_{\Re\mu=0}h(\mu)\prod_{1\leq i\leq j\leq 3}\frac{\Gamma_{\IR}(\frac{1}{2}+\mu_i+\mu_j)}{\Gamma_{\IR}(\frac{1}{2}-\mu_i-\mu_j)}\mathrm{spec}(\mu)\mathrm{d}\mu \\
    &+ \int_{\Re\mu=0}h(\mu) \int_{(-A)}T^{-(3+\alpha)u}(m_2^2m_1)^{-u/2}\\&\quad\cdot\prod_{1\leq i\leq j\leq 3}\frac{\Gamma_{\IR}(\frac{1}{2}+\mu_i+\mu_j)}{\Gamma_{\IR}(\frac{1}{2}-\mu_i-\mu_j)}\frac{e^{u^2}}{u}\frac{\zeta(3(\frac{1}{2}+u))}{\zeta(\frac{3}{2})}\frac{\mathcal{G}(-u,\mu)}{\mathcal{G}(0,\mu)}\frac{\mathrm{d}u}{2\pi i} \mathrm{spec}(\mu)\mathrm{d}\mu\Big)\ .
\end{align*}
The second summand is in $O_A(T^{-A(3-\alpha)})$. To bound the first summand, we first replace $h$ up to a negligible error with a Weyl group invariant real analytic function $\tilde{h}$ that has support in $\min_{w\in W}\|\mu-w(\tilde{\mu})\|\leq T^{\delta+\varepsilon}$ and also satisfies all conditions for being in $\CH(A_0,\tilde{\mu})$ except for holomorphicity.
Then we observe that
\[
    \tilde{h}(\mu)\prod_{1\leq i\leq j\leq 3}\frac{\Gamma_{\IR}(\frac{1}{2}+\mu_i+\mu_j)}{\Gamma_{\IR}(\frac{1}{2}-\mu_i-\mu_j)}\mathrm{spec}(\mu)
\]
is Weyl group invariant. Thus, we can reduce to the Weyl chambers with $\Im(\mu_1),\Im(\mu_2)>0>\Im(\mu_3)$. Assume without loss of generality that $\tilde{\mu}$ lies in this chamber. As in the proofs of Lemma \ref{lemma:fine_bound_inttransfrom_voronoi} and Lemma \ref{lemma:fine_bound_inttransfrom_long}, we use Lemma \ref{lemma:derivative_hinf} to see that
\[
    |\frac{\partial}{\partial\Im(\mu_2)}h_\infty(\mu)|=C+O(T^{-1+\delta+\varepsilon}),\quad \frac{\partial^j}{\partial\Im(\mu_2)^j}h_\infty(\mu)\ll T^{-j+1+\delta+\varepsilon}
\]
for $\|\mu-\tilde{\mu}\|<T^{\delta+\varepsilon}$, $j\geq 2$, and $C$ dependent on $\tilde{\mu}$ and bounded from below and above in terms of $c_1$ and $c_2$.

Thus, we can apply Lemma \ref{lemma:intbyparts} with $U=T^{\delta}$, $R=C$, $Q=Y=T$, and $X=T^B$ for some fixed $B$ and conclude that this contribution is negligible.

\subsection{The Voronoi elements}
\subsubsection{Bounding $\Sigma_{4}^{(1)}$}
The contribution of $\Sigma_{4}^{(1)}$ is
\begin{align*}
    &\sum_{\substack{b,c\\b^2c\ll T^{3+\alpha+\varepsilon}}}(b^2c)^{-1/2}\sum_{\epsilon=\pm}\sum_{\substack{D_2|D_1\\ m_2D_1=b^2D_2^2}}\frac{\tilde{S}(-\epsilon c^2,m_2,m_1,D_2,D_1)}{D_1D_2}\Phi_{w_4}^{V_{b,c}}\left(\frac{\epsilon m_1m_2c^2}{D_1D_2}\right)\\
    =&\sum_{\substack{b,c\\b^2c\ll T^{3+\alpha+\varepsilon}}}(b^2c)^{-1/2}\sum_{\epsilon=\pm}\sum_{\substack{D_2\\ m_2\mid b^2D_2}}\frac{\tilde{S}(-\epsilon c^2,m_2,m_1,D_2,D_2^2b^2/m_2)}{D_2^3b^2}m_2\Phi_{w_4}^{V_{b,c}}\left(\frac{\epsilon c^2m_1m_2^2}{D_2^3b^2}\right)\ .
\end{align*}
As argued in Section \ref{sec:roughboundsabsconv}, we can truncate the $D_2$ sum at $T^{O(1)}$ at the cost of a negligible error.

Lemma \ref{lemma:fine_bound_inttransfrom_voronoi} (a) implies that the contribution of 
\[
    \Big|\frac{c^2m_1m_2^2}{b^2D_2^3}\Big|\leq T^{3-\varepsilon}
\]
is negligible, and we can therefore truncate the $D_2$ sum further at
\[
    D_2\leq \left(\frac{c}{b}\right)^{2/3}|m_1m_2^2|^{1/3}T^{-1+\varepsilon}\ll \left(\frac{c}{b}\right)^{2/3}T^{-1+\eta+\varepsilon}\ .
\]
We now sum over $b$ and $c$ in dyadic ranges $B$ and $C$ using a partition of unity given by a smooth, compactly supported function $w$ with bounded derivatives. By definition, the Kloosterman sum only depends on $c$ modulo $D_2$, and we can perform Poisson summation in $c$ modulo $D_2$, resulting in
\begin{align*}
    &\sum_{\substack{B,C\\ B^2C\ll T^{3+\alpha+\varepsilon}}}\sum_{\substack{b,c}}w(\frac{b}{B})b^{-1}\sum_{\epsilon=\pm}\sum_{\substack{D_2\ll \left(\frac{C}{B}\right)^{2/3}T^{-1+\eta+\varepsilon}\\m_2\mid b^2D_2}}\sum_{k\mod D_2}e(\frac{ck}{D_2})\frac{\tilde{S}(-\epsilon k^2,m_2,m_1,D_2,D_2^2b^2/m_2)}{D_2^4b^2}m_2\\
    &\cdot\int_\IR y^{-1/2}w(\frac{y}{C})\Phi_{w_4}^{V_{b,y}}\left(\frac{\epsilon y^2m_2}{D_2^3b^2}\right)e(\frac{cy}{D_2})\mathrm{d}y\ .
\end{align*}

\subsubsection{The integral}
After a change of variables, we need to bound
\[
    I:=C^{1/2}\int_\IR y^{-1/2}w(y)\Phi_{w_4}^{V_{b,y}}\left(\frac{\epsilon y^2C^2m_2}{D_2^3b^2}\right)e(\frac{cCy}{D_2})\mathrm{d}y\ .
\]
We want to apply Lemma \ref{lemma:intbyparts} for $c$ nonzero with the phase function $H(y)=\frac{cCy}{D_2}$. Using Lemma \ref{lemma:fine_bound_inttransfrom_voronoi} (b) to bound the derivatives of $\Phi_{w_4}^{V_{b,y}}$, we get
\[
    R=\frac{cC}{D_2}\textnormal{ and }U\gg T+\left|\frac{C^2m_1m_2^2}{B^2D_2^3}\right|^{1/3}\ .
\]
We also have $Q=T^{B}$ for any $B$ as the higher derivatives of $H$ are all $0$.
Thus, the integral is negligible if $RU\gg T^\varepsilon$, meaning
\[
    |c|\geq |m_1m_2^2|^{1/3}(CB^2)^{-1/3}T^\varepsilon+ \frac{D_2}{C}T^{1+\varepsilon}\ll |m_1m_2^2|^{1/3}(CB^2)^{-1/3}T^\varepsilon\ . 
\]
For the $D_2$ sum to be nonempty, we need $C\gg T^{3/2-\eta-\varepsilon}$, and thus the integral is always negligible if $c$ is nonzero. 

For $c=0$, we have
\[
    I=C^{1/2}\int_\IR y^{-1/2}w(y)\Phi_{w_4}^{V_{b,y}}\left(\frac{\epsilon y^2C^2m_1m_2^2}{D_2^3b^2}\right)\mathrm{d}y\ .
\]

The test function depends on $y$. Thus, we need to open up the definition of $\Phi_{w_4}^{V_{b,y}}$, truncate the $u$-integral at $|\Im (u)|\leq T^\varepsilon$, and then use Lemma \ref{lemma:testfunctions_pre} to apply Lemma \ref{lemma:bound_fourier_voronoi} with $\Xi=\frac{\epsilon C^2m_2}{D_2^3b^2}$, $s=\frac{1}{2}+u$, and the test function $V_{b,c}(\mu)$. This implies that the integral is negligible except if
\[
    \big|\frac{\epsilon C^2m_1m_2^2}{D_2^3b^2}\big|\asymp_\varepsilon T^3\Leftrightarrow D_2\asymp_\varepsilon C^{2/3}|m_1m_2^2|^{1/3}b^{-2/3}T^{-1}.
\]
In this case, we estimate the integral by $T^{3/2+2\delta+\varepsilon}C^{1/2}$.

\subsubsection{The exponential sum}
Using that $c=0$, we want to estimate
\begin{align*}
    &\sum_{k\mod D_2}e(\frac{ck}{D_2})\tilde{S}(-\epsilon k^2,m_2,m_1,D_2,D_2^2b^2/m_2)\\
    =&D_2\sum_{\substack{k,x_1\mod D_2\\ x_2\mod b^2D_2/m_2\\ (x_1,D_2)=(x_2,b^2D_2/m_2)=1}}e\left(m_2\frac{\overline{x_1}x_2}{D_2}+m_1m_2\frac{\overline{x_2}}{b^2D_2}-\frac{\epsilon k^2x_1}{D_2}\right)\ .
\end{align*}
We first change $k$ to $\overline{x}_1k$, then $k$ to $x_2k$, and $\overline{x}_1$ to $\overline{x}_2^2\overline{x}_1$. Even though $x_2$ and $x_1,k$ have different moduli, we can always pick representatives for $x_2$ and $\overline{x}_2$ that are also invertible modulo $D_2$. This leads to

\begin{align*}
    &D_2\sum_{\substack{k,x_1\mod D_2\\ x_2\mod b^2D_2/m_2\\ (x_1,D_2)=(x_2,b^2D_2/m_2)=1}}e\left(\frac{\overline{x_1}(-\epsilon k^2+m_2\overline{x}_2)}{D_2}+m_1m_2\frac{\overline{x_2}}{b^2D_2}\right)
\end{align*}

If $D_2\equiv 2\mod 4$, the $k$ sum is zero. Else, it is of size $D_2^{1/2}$ and only depends on $\overline{x}_1$ by $\left(\frac{x_1}{D_2}\right)$. We get that the sum is bounded by
\begin{align*}
    D_2^{3/2}\sum_{\substack{x_1\mod D_2\\ x_2\mod b^2D_2/m_2\\ (x_1,D_2)=(x_2,b^2D_2/m_2)=1}}e\left(m_2\frac{(m_1+b^2\overline{x}_1^2)\overline{x_2}}{b^2D_2}\right)
    =D_2^{3/2}\sum_{\substack{x_1\mod D_2\\ (x_1,D_2)=1}}c_{b^2D_2/m_2}(m_1+b^2\overline{x}_1^2)\ ,
\end{align*}
where 
\[
    c_q(n):=\sum_{\substack{x\mod q\\(x,q)=1}}e(\frac{nx}{q})
\]
is again the Ramanujan sum.

If $b=m_1=m_2=1$, we could proceed as in Lemma \ref{lemma:exp_sum_bound}. To get into this position, we use the fact that the Ramanujan sum is multiplicative and approach the problem locally. 
Recall that we have for $p$ prime
\[
    c_{p^\lambda}(n)=\begin{cases}-p^{\lambda-1} & \textnormal{ if } v_p(n)=\lambda-1\\
    p^{\lambda}-p^{\lambda-1} & \textnormal{ if } v_p(n)\geq\lambda\\
    0 & \textnormal{ else}\ .
    \end{cases}
\]
Let $D_2^\prime=D_2/(D_2,m_2)$ and $m_2^\prime=m_2/(m_2,D_2)$. 
Let $b^2=\tilde{b}^2(b^2,(m_1m_2^\prime)^2)$. If $p\mid \tilde{b}$, then the Ramanujan sum is zero at that place. Thus, $\tilde{b}=1$ and $b\mid m_1m_2^\prime$. Furthermore, let $D_2^{\prime\prime}=D_2^{\prime}/(D_2^\prime,m_1^\infty)$. This implies $(D_2^{\prime\prime},b)=1$. We now factorize the Ramanujan sum over the moduli $D_2^{\prime\prime}$ and $(D_2^\prime,m_1^\infty)b^2/m_2^\prime$. We get
\begin{align*}
    D_2^{3/2}(D_2,m_2)\sum_{\substack{x_1\mod D^{\prime\prime}_2\\ (x_1,D^{\prime\prime}_2)=1}}c_{D^{\prime\prime}_2}(m_1+b^2\overline{x}_1^2) \cdot \sum_{\substack{x_1\mod (D^{\prime}_2,m_1^\infty)\\ (x_1,(D^{\prime}_2,m_1^\infty))=1}}c_{b^2(D^{\prime}_2,m_1^\infty)/m_2^\prime}(m_1+b^2\overline{x}_1^2)\ .
\end{align*}
For the sum modulo $D^{\prime\prime}_2$, we can argue as in the proof of Lemma \ref{lemma:exp_sum_bound}, with the difference that $x_1$ and the discriminant are both coprime to the modulus. Thus, there are at most 2 (or 4 if $p=2$) solutions modulo $p^\lambda$ of 
\[
    m_1+b^2\overline{x}_1^2\equiv 0\mod p^\lambda 
\]
and at most $2p$ solutions modulo $p^\lambda$ of 
\[
    m_1+b^2\overline{x}_1^2\equiv 0\mod p^{\lambda-1}\ .
\]
Thus, we can bound the $x_1$ sum in this case by $D_2^{\prime\prime}$.

For the Ramanujan sum $c_{b^2(D^{\prime}_2,m_1^\infty)/m_2^\prime}(m_1+b^2\overline{x}_1^2)$, we distinguish a few cases. For $p\mid (D^{\prime}_2,m_1^\infty)$, we have either $v_p(D^{\prime}_2)>v_p(m_1)$, in which case the sum is zero, or $v_p(D^{\prime}_2)\leq v_p(m_1)$, in which case we estimate the Ramanujan sum trivially by $p^{v_p(b^2(D_2^\prime,m_1)/m_2^\prime)}$ and the length of the $x_1$ sum by $(D_2^\prime,m_1)$. For $p\nmid (D^{\prime}_2,m_1^\infty)$, we estimate the sum trivially by $p^{v_p(b^2/m_2^\prime)}$. In total, this leads to the bound
\[
     D_2^{3/2}(D_2,m_2)D_2^{\prime\prime}(D_2^\prime,m_1)^2 b^2/m_2^\prime\ll D_2^{5/2}m_1^4 b^2\ .
\]

\subsubsection{Bringing it all together}
Applying the bounds for the integral and exponential sum gives
\begin{align*}
    &T^{3/2+2\delta+\varepsilon}m_1^4 m_2\sum_{\substack{B,C\\ B^2C\ll T^{3+\alpha+\varepsilon}}}C^{1/2}\sum_{\substack{b,c}}w(\frac{b}{B})b^{-1}\sum_{\epsilon=\pm}\sum_{\substack{D_2\asymp_\varepsilon \left(\frac{C}{B}\right)^{2/3}|m_1m_2^2|^{1/3}T^{-1}\\m_2\mid b^2D_2}}D_2^{-3/2}\\
    \ll&\ T^{3/2+2\delta+\varepsilon}m_1^4 m_2\sum_{\substack{B,C\\ B^2C\ll T^{3+\alpha+\varepsilon}}}C^{1/2}\sum_{\substack{b,c}}w(\frac{b}{B})b^{-1}(\left(C/B\right)^{2/3}|m_1m_2^2|^{1/3}T^{-1-\varepsilon})^{-1/2}\\
    \ll&\ T^{2+2\delta+\varepsilon}m_1^{3+5/6} m_2^{2/3} \sum_{\substack{B,C\\ B^2C\ll T^{3+\alpha+\varepsilon}}}C^{1/6}B^{1/3}\\
    \ll&\ T^{5/2+2\delta+\alpha/6+5\eta+\varepsilon}\ .
\end{align*}
For small enough $\alpha,\delta$, and $\eta$, this is less than the claimed error term.\qed

\subsubsection{Bounding $\Sigma_{4}^{(2)}$}
The contribution of $\Sigma_{4}^{(2)}$ is
\begin{align*}
    &\sum_{\substack{b,c\\b^2c\ll T^{3-\alpha+\varepsilon}}}(b^2c)^{-1/2}\sum_{\epsilon=\pm}\sum_{\substack{D_2|D_1\\ c^2D_1=m_2D_2^2}}\frac{\tilde{S}(-\epsilon m_1,c^2,b^2,D_2,D_1)}{D_1D_2}\Phi_{w_4}^{W_{b,c}}\left(\frac{\epsilon b^2c^2m_1}{D_1D_2}\right)\ .
\end{align*}
As before, we can truncate the $D_2$ sum at $D_2\ll T^B$ for $B$ sufficiently large. We can also apply Lemma \ref{lemma:fine_bound_inttransfrom_voronoi} (c) and conclude that everything is negligible except if 
\[
    \frac{b^2c^2|m_1|}{D_1D_2}\geq T^{3-\varepsilon}\ .
\]
Let $D_1=mD_2$ for some $m\in\IN$. The above is equivalent to
\[
    b^2\frac{|m_2m_1|}{m^2}T^{-3+\varepsilon}\geq D_2.
\]
But for $\eta$ smaller than $\alpha/3$, this condition is not satisfied for nonzero $D_2$, and thus, everything is negligible for $\alpha$ large enough relative to $\eta$.\qed

\subsubsection{Bounding $\Sigma_{5}^{(1)}$}
The contribution of $\Sigma_{5}^{(1)}$ is
\begin{align*}
    &\sum_{\substack{b,c\\b^2c\ll T^{3+\alpha+\varepsilon}}}(b^2c)^{-1/2}\sum_{\epsilon=\pm}\sum_{\substack{D_1|D_2\\ m_1D_2=c^2D_1^2}}\frac{\tilde{S}(\epsilon b^2,m_1,m_2,D_1,D_2)}{D_1D_2}\Phi_{w_5}^{V_{b,c}}\left(\frac{\epsilon m_1m_2b^2}{D_1D_2}\right)\\
    =&\sum_{\substack{b,c\\b^2c\ll T^{3+\alpha+\varepsilon}}}(b^2c)^{-1/2}\sum_{\epsilon=\pm}\sum_{\substack{D_1\\ m_1\mid c^2D_1}}\frac{\tilde{S}(-\epsilon b^2,m_1,m_2,D_1,D_1^2c^2/m_1)}{D_1^3c^2/m_1}\Phi_{w_5}^{V_{b,c}}\left(\frac{\epsilon b^2m_2m_1^2}{D_1^3c^2}\right)\ .
\end{align*}
We again start by truncating the $D_1$ sum at $D_1\ll T^B$. We apply Lemma \ref{lemma:fine_bound_inttransfrom_voronoi} (a) to further truncate the sum at
\[
    D_1\leq \left(\frac{b}{c}\right)^{2/3}T^{-1+\varepsilon}|m_1^2m_2|^{1/3}\ .
\]
This seems very similar to $\Sigma_{4}^{(1)}$ but with $b$ and $c$ swapped. We observe that there are only possible values for $D_1$ if $b^2\gg T^{3-3\eta-\varepsilon}$ and thus $c\ll T^{\alpha+3\eta+\varepsilon}$. This will be used later. 

We argue in exactly the same way as for $\Sigma_{4}^{(1)}$. We start by summing over $b$ and $c$ in dyadic ranges $B,C$. We then perform Poisson summation in $b$ modulo $D_1$. We get the same result as before but without the factor $C^{1/2}$, as this came from $\frac{1}{c^{1/2}}$, and we now have $\frac{1}{b}$. We then use Lemma \ref{lemma:fine_bound_inttransfrom_voronoi} (c) for the non zero frequencies, which are negligible by the exact same argument with $b,c$ and $B,C$ swapped. We estimate the zero frequency trivially by $T^{\frac{3}{2}+2\delta+\varepsilon}$ using Lemma \ref{lemma:bound_fourier_voronoi}. Estimate all exponential sums trivially and bound the length of the $D_1$ sum by $T^{\frac{2}{3}\alpha+\eta+\varepsilon}$
and end up with
\[
    \sum_{\substack{B,C\\B^2C\ll T^{3+\alpha+\varepsilon}\\ C\ll T^{\alpha+3\eta+\varepsilon}}}\sum_{c}w(\frac{c}{C})c^{-1/2}T^{\frac{3}{2}+\varepsilon}T^{\frac{2}{3}\alpha+\eta+\varepsilon}\ll T^{\frac{3}{2}+2\delta+\frac{7}{6}\alpha+\frac{5}{2}\eta+\varepsilon}\ ,
\]
which is in the error term for small enough $\alpha,\delta$ and $\eta$.\qed

\subsubsection{Bounding $\Sigma_{5}^{(2)}$}
The contribution of $\Sigma_{5}^{(2)}$ is
\begin{align*}
    &\sum_{\substack{b,c\\b^2c\ll T^{3-\alpha+\varepsilon}}}(b^2c)^{-1/2}\sum_{\epsilon=\pm}\sum_{\substack{D_1|D_2\\ b^2D_2=m_1D_1^2}}\frac{\tilde{S}(\epsilon m_2,b^2,c^2,D_1,D_2)}{D_1D_2}\Phi_{w_5}^{W_{b,c}}\left(\frac{\epsilon b^2c^2m_2}{D_1D_2}\right)\ .
\end{align*}
We change variables $\tilde{D}_1:=\frac{m_1}{b^2}D_1$. This leads to
\begin{align*}
    &\sum_{\substack{b,c\\b^2c\ll T^{3-\alpha+\varepsilon}}}(b^2c)^{-1/2}\sum_{\epsilon=\pm}\sum_{\substack{\tilde{D}_1|D_2\\ m_1D_2=b^2\tilde{D}_1^2}}\frac{\tilde{S}(-\epsilon c^2,m_1,m_2,\tilde{D}_1,D_2)}{\tilde{D}_1D_2}\Phi_{w_5}^{W_{b,c}}\left(\frac{-\epsilon m_1m_2c^2}{\tilde{D}_1D_2}\right)\ .
\end{align*}
This is almost exactly the contribution of $\Sigma_{4}^{(1)}$. With the following differences: $m_1$ and $m_2$ are swapped, $\tilde{D}_1$ is $D_2$, and $D_2$ is $D_1$; there is an additional sign in the argument of $\Phi_{w_5}$; it is $\Phi_{w_5}$ instead of $\Phi_{w_4}$; the $b,c$ sum only goes to $b^2c\ll T^{3-\alpha+\varepsilon}$, and we have a different test function coming from the $W$-part of the approximate functional equation. But all the tools we used are indifferent to these changes. Lemma \ref{lemma:fine_bound_inttransfrom_voronoi} and Lemma \ref{lemma:bound_fourier_voronoi} also hold for $w_5$ and the other test function, and the sign is never used. The $\alpha$ part of the final bound changes, but $\alpha$ is so small that this does not matter anyway. Thus, this case also only contributes to the error term.\qed

\subsection{The long Weyl element}
\subsubsection{Bounding $\Sigma^{(1)}_6$}

The contribution of $\Sigma^{(1)}_6$ is
\[
    \sum_{b^2c\ll T^{3+\alpha+\varepsilon}} (b^2c)^{-1/2} \sum_{\epsilon_1,\epsilon_2 =\pm}\sum_{D_1,D_2} \frac{S(\epsilon_2c^2,m_2,m_1,\epsilon_1b^2;D_1,D_2)}{D_1D_2}\Phi_{w_6}^{V_{b,c}}\left(\frac{-\epsilon_2m_1c^2D_2}{D_1^2},\frac{-\epsilon_1m_2b^2D_1}{D_2^2}\right)\ .
\]
As before, we can truncate the $D_1$ and $D_2$ sums at $D_1,D_2\ll T^B$ for some large enough $B$.

We now replace the full Kloosterman sum with a sum over strata using Lemma \ref{lemma:b_f_KLsums} (c) and change the order of summation, resulting in
\begin{align*}
    \sum_{b^2c\ll T^{3+\alpha+\varepsilon}} (b^2c)^{-1/2} \sum_{\epsilon_1,\epsilon_2 =\pm}\sum_{D_0,D_1,D_2\ll T^{O(1)}} &\frac{S_{D_0}(\epsilon_2c^2,m_2,m_1,\epsilon_1b^2;D_0D_1,D_0D_2)}{D_0D_1D_0D_2}\\\cdot&\Phi_{w_6}^{V_{b,c}}\left(\frac{-\epsilon_2m_1c^2D_2}{D_1^2D_0},\frac{-\epsilon_1m_2b^2D_1}{D_2^2D_0}\right)\ .
\end{align*}

To truncate the sum further, we can use Lemma \ref{lemma:fine_bound_inttransfrom_long} (a). This implies that for
\[
    \min\left(\frac{c^{2/3}b^{1/3}m_1^{1/3}m_2^{1/6}}{(D_1D_0)^{1/2}},\frac{c^{1/3}b^{2/3}m_1^{1/6}m_2^{1/3}}{(D_2D_0)^{1/2}}\right)\leq T^{1-\varepsilon}
\]
we have 
\[
\Phi_{w_6}\left(\frac{-\epsilon_2m_1c^2D_2}{D_1^2D_0},\frac{-\epsilon_1m_2b^2D_1}{D_2^2D_0}\right) \ll T^{-B}
\]
for any positive $B$. Using $m_1,m_2\ll T^\eta$, we can truncate the sums at
\[
    D_1D_0\ll c^{4/3}b^{2/3}T^{-2+\eta+\varepsilon} \text{ and } D_2D_0\ll c^{2/3}b^{4/3}T^{-2+\eta+\varepsilon}\ .
\]
The $D_2,D_0$-sum is only nonempty if $b^2c \gg T^{3-\frac{3}{2}\eta-\varepsilon}$. Thus, we can truncate the $b,c$ sum further to
\[
    T^{3-\frac{3}{2}\eta-\varepsilon}\ll b^2c\ll T^{3+\alpha+\varepsilon}\ .
\]

We now sum over $b$ and $c$ in dyadic ranges $B$ and $C$ using a partition of unity given by a smooth, compactly supported function $w$ with bounded derivatives.

\begin{align*}
    &\sum_{T^{3-\frac{3}{2}\eta-\varepsilon}\ll B^2C\ll T^{3+\alpha+\varepsilon}} \sum_{\epsilon_1,\epsilon_2 =\pm}\sum_{\substack{D_0D_1\ll C^{4/3}B^{2/3}T^{-2+\eta+\varepsilon}\\D_0D_2\ll C^{2/3}B^{4/3}T^{-2+\eta+\varepsilon}}}\sum_{b,c}w(\frac{b}{B})w(\frac{c}{C}) (b^2c)^{-1/2} \\
    &\cdot\frac{S_{D_0}(\epsilon_2c^2,m_2,m_1,\epsilon_1b^2;D_1D_0,D_2D_0)}{D_1D_2D_0^2}\Phi_{w_6}^{V_{b,c}}\left(\frac{-\epsilon_2m_1c^2D_2}{D_1^2D_0},\frac{-\epsilon_1m_2b^2D_1}{D_2^2D_0}\right)\ .
\end{align*}

The explicit formula for the fine Kloosterman sum in Lemma \ref{lemma:b_f_KLsums} (c) implies that the exponential sum only depends on $c\mod D_1$, and we can perform Poisson summation in $c$ modulo $D_1$. We arrive at
\begin{align*}
    &\sum_{T^{3-\frac{3}{2}\eta-\varepsilon}\ll B^2C\ll T^{3+\alpha+\varepsilon}} \sum_{\epsilon_1,\epsilon_2 =\pm}\sum_{\substack{D_0D_1\ll C^{4/3}B^{2/3}T^{-2+\eta+\varepsilon}\\D_0D_2\ll C^{2/3}B^{4/3}T^{-2+\eta+\varepsilon}}}\sum_{b,c}b^{-1}w(\frac{b}{B})\\
    &\sum_{k_1\mathrm{mod} D_1}e\left(\frac{ck_1}{D_1}\right)\frac{S_{D_0}(\epsilon_2k_1^2,m_2,m_1,\epsilon_1b^2;D_1D_0,D_2D_0)}{(D_1D_0)^{2}D_2}\\
    &\cdot \int_\IR y_1^{-1/2}\Phi_{w_6}^{V_{b,y}}\left(\frac{-\epsilon_2m_1y_1^2D_2}{D_1^2D_0},\frac{-\epsilon_1m_2b^2D_1}{D_2^2D_0}\right)w(\frac{y_1}{C})e\left(\frac{y_1c}{D_1}\right)\mathrm{d}y_1\ .
\end{align*}
\subsubsection{The integral}
After a change of variables, we need to bound
\begin{align*}
    I:=C^{1/2}\int_\IR y_1^{-1/2}\Phi_{w_6}^{V_{b,y}}\left(\frac{-\epsilon_2m_1y_1^2C^2D_2}{D_1^2D_0},\frac{-\epsilon_1m_2b^2D_1}{D_2^2D_0}\right)w(y_1)e\left(\frac{y_1cC}{D_1}\right)\mathrm{d}y_1\ .
\end{align*}
We want to apply Lemma \ref{lemma:intbyparts} with $H(y_1)=\frac{cCy_1}{D_1}$ if $c$ is large and treat the cases $c=0$ and $c$ small separately. Assuming $c$ is nonzero and using Lemma \ref{lemma:fine_bound_inttransfrom_long} (b) to bound the derivatives of $\Phi_{w_6}^{V_{b,y}}$, we get
\[
    R=\frac{cC}{D_1} \textnormal{ and } U\gg T+\frac{m_1^{1/2}CD_2^{1/2}}{D_1D_0^{1/2}}+\frac{m_1^{1/3}m_2^{1/6}C^{2/3}B^{1/3}}{(D_1D_0)^{1/2}}\ .
\]
Thus, for 
\[
    c\gg T^{\varepsilon}\max(T\frac{D_1}{C},\frac{m_1^{1/2}D_2^{1/2}}{D_0^{1/2}},\frac{m_1^{1/3}m_2^{1/6}B^{1/3}}{C^{1/3}}\frac{D_1^{1/2}}{D_0^{1/2}})\ll T^{\frac{1}{3}\alpha+\frac{1}{2}\eta+\varepsilon}
\]
the integral is negligible.

For $c=0$, we have
\begin{align*}
    I=C^{1/2}\int_{(\varepsilon)}\big(\frac{b^2}{T^{3+\alpha}}\big)^{-u}\frac{e^{u^2}}{u}\int_\IR y_1^{-1/2-u}w(y_1)\Phi_{w_6}^{\tilde{V}_u}\left(\frac{-\epsilon_2y_1^2m_1C^2D_2}{D_1^2D_0},\frac{-\epsilon_1m_2b^2D_1}{D_2^2D_0}\right)\mathrm{d}y_1\frac{\mathrm{d}u}{2\pi i}\ .
\end{align*}
We have
\[
    \Big|\frac{-\epsilon_1m_2b^2D_1}{D_2^2D_0}\Big|\ll B^{8/3}C^{4/3}T^{-2+2\eta+\varepsilon}\ll T^{2+\frac{4}{3}\alpha+2\eta+\varepsilon}\ .
\]
By the rapid decay of $e^{u^2}$ in vertical lines, we can truncate the $u$ integral at $|u|\leq T^\varepsilon$. By Lemma \ref{lemma:testfunctions} (a), $\tilde{V}_u\in \CH(A_0,\tilde{\mu})$, and for $\alpha$ and $\eta$ small enough, we can apply Lemma \ref{lemma:bound_fourier_long} (a) with $s=\frac{1}{2}+u$ and the test function $\tilde{V}_u$ to the $y_1$ integral. This leads to the total bound for the integral in this case of $C^{1/2}T^{\frac{3}{2}+2\delta-\frac{1}{6}+\frac{1}{360}+\varepsilon}$.

For $0\neq |c|\ll T^{\frac{1}{3}\alpha+\frac{1}{2}\eta+\varepsilon}$, we also have 
\begin{align*}
    \Big|\frac{-\epsilon_1m_2b^2D_1}{D_2^2D_0}\Big|\ll B^{8/3}C^{4/3}T^{-2+2\eta+\varepsilon}\ll T^{2+\frac{4}{3}\alpha+2\eta+\varepsilon}\ .
\end{align*}
In addition, we have
\begin{align*}
    \Big|\frac{cC}{D_1}\Big|\gg T^{1-\frac{1}{3}\alpha-\eta+\epsilon},\ 
    \left|\left(\frac{-\epsilon_1m_2b^2D_1}{D_2^2D_0}\right)\left(\frac{-\epsilon_2m_1C^2D_2}{D_1^2D_0}\right)^{-1}\left(\frac{cC}{D_1}\right)^3\right|=\left|\frac{m_2b^2Cc^3}{m_1D_2^3}\right|
    \\\textnormal{ and }T^{3-2\alpha-4\eta-\epsilon}\ll \left|\frac{m_2b^2Cc^3}{m_1D_2^3}\right|\ll T^{3+2\alpha+\eta+\varepsilon}\ .
\end{align*}
As above, we truncate the $u$ integral to $|u|\leq T^\varepsilon$ at the cost of a negligible error. For $\alpha$ and $\eta$ small enough, we apply Lemma \ref{lemma:bound_fourier_long} (b) with $s=\frac{1}{2}+u$ and the test function $\tilde{V}_u$, and then multiply by the length of the $c$ sum to get the bound
\[
    C^{1/2}T^{\frac{3}{2}+\frac{1}{3}\alpha+\frac{1}{2}\eta -\frac{1}{240}+2\delta+\varepsilon}
\]
for the contribution of the integral and the $c$ sum in this case.

\subsubsection{The exponential sum}
We want to bound 
\[
\sum_{k_1\mathrm{mod} D_1}e\left(\frac{ck_1}{D_1}\right)\frac{S_{D_0}(\epsilon_2k_1^2,m_2,m_1,\epsilon_1b^2;D_1D_0,D_2D_0)}{(D_1D_0)^{2}D_2}\ .
\] Ignoring the factor $(D_1D_0)^{-2}D_2^{-1}$ for now, we start by inserting the parametrization of the fine Kloosterman sum from Lemma \ref{lemma:b_f_KLsums} (c). This gives
\begin{align*}
    &\sum_{k_1\mod D_1}e\left(\frac{ck_1}{D_1}\right)\sum_{\substack{x\mod D_0\\ xy\equiv 1\mod D_0\\ m_1D_2+m_2D_1x\equiv 0\mod D_0} }S(\epsilon_2k_1^2,\frac{m_1D_2+m_2D_1x}{D_0};D_1)S(\epsilon_1b^2,\frac{m_1D_2y+m_2D_1}{D_0};D_2)\\
    =&\sum_{\substack{x\mod D_0\\ xy\equiv 1\mod D_0\\ m_1D_2+m_2D_1x\equiv 0\mod D_0}}S(\epsilon_1b^2,\frac{m_1D_2y+m_2D_1}{D_0};D_2)\sum_{k_1\mod D_1}e\left(\frac{ck_1}{D_1}\right)S(\epsilon_2k_1^2,\frac{m_1D_2+m_2D_1x}{D_0};D_1)\ .
\end{align*}
Let $N(x)=\frac{m_1D_2+m_2D_1x}{D_0}$. Looking at the $k_1$ sum and the second Kloosterman sum, we have
\begin{align*}
    \sum_{k_1\mod D_1}e\left(\frac{ck_1}{D_1}\right)S(\epsilon_2k_1^2,\frac{m_1D_2+m_2D_1x}{D_0};D_1)
    =\sum_{k_1\mod D_1}e\left(\frac{ck_1}{D_1}\right)S(k_1^2,\epsilon_2N(x);D_1)\ .
\end{align*}
Using Lemma \ref{lemma:exp_sum_bound}, this can be bounded independently of $x$ and $c$ by $s_{D_1}D_1^{1+\varepsilon}$ with $D_1=s_{D_1}^2f_{D_1}$ and $f_{D_1}$ squarefree. We bound the Kloosterman sum modulo $D_2$ trivially by $D_2$ and bound the length of the $x$ sum by $D_0$. Including the factor $(D_1D_0)^{-2}D_2^{-1}$, we end up with the bound $s_{D_1}D_1^{-1+\varepsilon}D_0^{-1}$.

\subsubsection{Bringing it all together}
Using the above bounds for the integral and the exponential sum, we are left with 
\begin{align*}
    &\sum_{T^{3-\frac{3}{2}\eta-\varepsilon}\ll B^2C\ll T^{3+\alpha+\varepsilon}} \sum_{\epsilon_1,\epsilon_2 =\pm}\sum_{\substack{D_0D_1\ll C^{4/3}B^{2/3}T^{-2+\eta+\varepsilon}\\D_0D_2\ll C^{2/3}B^{4/3}T^{-2+\eta+\varepsilon}}}\sum_{b}b^{-1}w(\frac{b}{B}) \\
    &\cdot s_{D_1}D_1^{-1+\varepsilon}D_0^{-1}C^{1/2}T^{\frac{3}{2}+\frac{2}{3}\alpha -\frac{1}{240}+\varepsilon}\ .
\end{align*}
The $D_0$ sum can be bounded by $T^{\varepsilon}$, and the $D_2$ sums can be bounded by $T^{\frac{2}{3}\alpha+\eta+\varepsilon}$. Let $X=C^{4/3}B^{2/3}T^{-2+\eta+\varepsilon}$. For the $D_1$ sum, we have the following:
\begin{align*}
    \sum_{D_1\ll X}s_{D_1}D_1^{-1+\varepsilon}=\sum_{s_{D_1}\ll X^{1/2}}\sum_{f\ll X/s_{D_1}^2}(fs_{D_1})^{-1+\varepsilon}\mu(f)^2\ll X^\varepsilon\ll T^\varepsilon.
\end{align*}
The $b$ sum is also only of size $T^\varepsilon$. The sum over $\epsilon_1,\epsilon_2$ is of length $4$. The biggest value for $C^{1/2}$ is $T^{\frac{3}{2}+\frac{1}{2}\alpha+\varepsilon}$. Putting it all together, we are left with the bound
\[
    T^{3+2\delta-\frac{1}{240}+\frac{11}{6}\alpha+\eta+\varepsilon}\ ,
\]
which is enough for very small $\alpha,\delta$ and $\eta$.\qed

\subsubsection{Bounding $\Sigma^{(2)}_6$}
The contribution of $\Sigma^{(2)}_6$ is
\[
    \sum_{b^2c\ll T^{3-\alpha+\varepsilon}} (b^2c)^{-1/2} \sum_{\epsilon_1,\epsilon_2 =\pm}\sum_{D_1,D_2} \frac{S(\epsilon_2m_1,c^2,b^2,\epsilon_1m_2;D_1,D_2)}{D_1D_2}\Phi_{w_6}^{W_{b,c}}\left(\frac{-\epsilon_2m_1b^2D_2}{D_1^2},\frac{-\epsilon_1m_2c^2D_1}{D_2^2}\right)\ .
\]
We again truncate the $D_1,D_2$ sums at $D_1,D_2\leq T^B$ for some sufficiently large $B$.

We then use Lemma \ref{lemma:fine_bound_inttransfrom_long} (c). This implies that for
\[
    \min\left(\frac{b^{2/3}c^{1/3}m_1^{1/3}m_2^{1/6}}{D_1^{1/2}},\frac{c^{2/3}b^{1/3}m_2^{1/3}m_1^{1/6}}{D_2^{1/2}}\right)\leq T^{1-\varepsilon}
\]
we have 
\[
\Phi_{w_6}^{W_{b,c}}\left(\frac{-\epsilon_2m_1b^2D_2}{D_1^2},\frac{-\epsilon_1m_2c^2D_1}{D_2^2}\right) \ll_B T^{-B}
\]
for any positive $B$. We can thus truncate the sums at
\[
    D_1\ll c^{2/3}b^{4/3}T^{-2+\eta+\varepsilon} \text{ and } D_2\ll c^{4/3}b^{2/3}T^{-2+\eta+\varepsilon}\ .
\]
The $D_1$-sum is only nonempty if $b^2c \gg T^{3-\frac{3}{2}\eta-\varepsilon}$, which is not the case for $\alpha>\frac{3}{2}\eta+\varepsilon$. Thus, the whole contribution of $\Sigma^{(2)}_6$ is negligible for $\alpha$ big enough compared to $\eta$.\qed

\end{document}